\newtheoremstyle{Beweis}
  {0cm}                   
  {1cm}                   
  {\normalfont}           
  {}                      
  {\normalfont\itshape}   
  {.}                     
  { }                     
  {}                      
\newtheorem{proposition}{Proposition}[section]
\newtheorem{lemma}[proposition]{Lemma}
\newtheorem{corollary}[proposition]{Corollary}
\newtheorem{theorem}[proposition]{Theorem}
\newtheorem*{main}{Theorem}
\theoremstyle{definition}
\newtheorem{remark}[proposition]{Remark}
\newtheorem{remarks}[proposition]{Remarks}
\newtheorem{definition}[proposition]{Definition}
\theoremstyle{Beweis}
\newcommand{\longra}{\longrightarrow}
\newcommand{\Lra}{\Leftrightarrow}
\newcommand{\Ra}{\Rightarrow}
\newcommand{\Longlra}{\Longleftrightarrow}
\newcommand{\Longra}{\Longrightarrow}
\newcommand{\inv}{^{-1}}
\newcommand{\hermitian}[1]{^{H}\!{#1}}
\newcommand{\transpose}[1]{^{T}\!{#1}}
\newcommand{\NN}{\mathbb N}
\newcommand{\ZZ}{\mathbb Z}
\newcommand{\QQ}{\mathbb Q}
\newcommand{\RR}{\mathbb R}
\newcommand{\CC}{\mathbb C}
\newcommand{\PP}{\mathbb P}
\newcommand{\VV}{\mathbb V}
\newcommand{\WW}{\mathbb W}
\newcommand{\sO}{{\mathcal O}}
\newcommand{\sV}{{\mathcal V}}
\newcommand{\rk}{\operatorname{rk}}
\newcommand{\Aut}{\operatorname{Aut}}
\newcommand{\GL}{\operatorname{GL}}
\newcommand{\Hom}{\operatorname{Hom}}
\newcommand{\Centre}{\operatorname{centre}}
\newcommand{\ord}{\operatorname{ord}}
\newcommand{\red}{\operatorname{red}}
\newcommand{\lcm}{\operatorname{lcm}}
\newcommand{\Zdunits}{\left(\ZZ / d\ZZ\right)^*}
\newcommand{\Zd}{\ZZ / d\ZZ}
\newcommand{\CHn}{\CC H^n}
\newcommand{\qnf}[1]{{\QQ(\sqrt{ #1 })}}
\newcommand{\kronecker}[2]{{\left(\frac{#1}{#2}\right)}}
\newcommand{\latt}[1]{{\langle{#1}\rangle}}
\renewcommand{\bar}[1]{\overline{#1}}
\newcommand{\isom}{\cong}
\newcommand{\kommentar}[1]{}
\begin{document}
\title{Singularities of ball quotients}

\author{Niko Behrens}
\address{Institut f\"ur Algebraische Geometrie, Leibniz Universit\"at
  Hannover, Welfengarten 1, 30167 Hannover, Germany} 
\email{behrens@math.uni-hannover.de}



%

\date{\today}

\maketitle

\begin{abstract}
  We prove a result on the singularities of ball quotients $\Gamma\backslash\CHn$. More precisely, we show that a ball quotient has canonical singularities under certain restrictions on the dimension $n$ and the underlying lattice. We also extend this result to the toroidal compactification $(\Gamma\backslash\CHn)^*$.
\end{abstract}

\section{Introduction}
Modular varieties are much studied objects in algebraic geometry. An example is the moduli space of polarised K3 surfaces which is a modular variety of orthogonal type. Similar modular varieties also occur in the context of irreducible symplectic manifolds. V.A.~Gritsenko, K.~Hulek and G.K.~Sankaran proved that the compactified moduli space of polarised K3 surfaces of degree $2d$ has canonical singularities. This result was used to show that this moduli space is of general type if $d>61$ (cf. \cite{MR2336040}).

In this paper we shall consider ball quotients  $\Gamma\backslash\CHn$ where $\Gamma$ is an arithmetic subgroup of the group of unitary transformations of a hermitian lattice $\Lambda$ of signature $(n,1)$, where $\Lambda\isom\sO^{n+1}$ for $\sO$ the ring of integers of some number field $\qnf{D},\ D<0$.

Varieties that arise in such a way  often also have an interpretation as moduli spaces. Examples appear in D.~Allcock's work \cite{MR1949641} as the moduli space of cubic threefolds and in the work of D.~Allcock, J.~Carlson and D.~Toledo \cite{MR1910264} as the moduli space of cubic surfaces. There are also papers of S.~Kond{\=o} \cite{MR1780433,MR2306153} on the moduli space of ordered 5 points on $\PP^1$ which appears as a two dimensional ball quotient, or on the moduli space of plane quartic curves which is birational to a quotient of a $6$-dimensional complex ball.

Ball quotient surfaces $B^2_\CC/\Gamma$ were studied by R.-P.~Holzapfel. Among  other  things he calculated formulae for the Euler number $e(\overline{B^2_\CC/\Gamma})$ and  the index $\tau(\overline{B^2_\CC/\Gamma})$ for a smooth model of the  Baily-Borel compactification and studied arithmetic aspects of ball quotient surfaces, e.g. \cite{MR653917, MR1685419}.

We give an outline of the organisation and the results of this paper.

In section \ref{SectionDefofObjects} we will recall the general definitions and basic properties of the objects that will be studied in the following sections. Furthermore the problem will be reduced to the local study of the action of the stabiliser subgroup $G$ on the tangent space $T_{[\omega]}\CHn$. 

In section \ref{SectionInterior} we will provide a criterion which implies that $\Gamma\backslash\CHn$ has canonical singularities, using methods similar to those of \cite{MR2336040}. This will be achieved by studying the representations of the action of $G$ on the tangent space $\Hom(\WW,\CC^{n+1}/\WW)$ and applying the Reid-Tai criterion. For this one first studies elements that do not act as quasi-reflections and then reduces the general situation to previous results.

For ball quotients $\Gamma\backslash\CHn$ the existence of toroidal compactifications  $(\Gamma\backslash\CHn)^*$ follows from the general theory described in  \cite{MR0457437}. Since all cusps in the Baily-Borel compactification are $0$-dimensional those toroidal compactifications are unique. We will describe them in section \ref{SectionBoundary}. We can then apply the results of section \ref{SectionInterior} and prove the main result:
\begin{main}
  The projective variety $(\Gamma\backslash\CHn)^*$ has canonical singularities for $n\geq 13$ provided  the discriminant of the number field $\qnf{D}$ associated to the lattice is not equal to $-3$,$-4$ or $-8$.
\end{main}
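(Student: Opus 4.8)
The plan is to combine the interior estimate from section \ref{SectionInterior} with an explicit analysis of the boundary contributions coming from the toroidal compactification described in section \ref{SectionBoundary}. By the general reduction already carried out, it suffices to verify the Reid--Tai criterion at every point of $(\Gamma\backslash\CHn)^*$: for every such point with stabiliser $G$ and every non-trivial $g\in G$ acting on the relevant tangent space with eigenvalues $e^{2\pi i a_j/m}$, $0\le a_j<m$, one must have $\sum_j a_j/m \ge 1$, \emph{unless} $g$ acts as a quasi-reflection, in which case one invokes the fact that quotients by groups generated by quasi-reflections are again of the same type and argues on the quotient. For interior points this is exactly the criterion established in section \ref{SectionInterior}, which already gives canonical singularities on $\Gamma\backslash\CHn$ for $n\ge 13$ under the stated discriminant hypothesis, so the only new work is at the boundary.

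The first step is to recall the structure of a neighbourhood of a cusp in $(\Gamma\backslash\CHn)^*$. Since all cusps are $0$-dimensional, the toroidal construction of \cite{MR0457437} replaces the cusp by a single divisor, and a neighbourhood is modelled on a quotient of a punctured-disc bundle over an $(n-1)$-dimensional abelian variety (or a linear piece thereof) glued in along the torus embedding determined by the rational polyhedral cone decomposition — which here is forced, so the compactification is canonical and unique. The stabiliser of a boundary point is an extension of a finite group acting on the $(n-1)$ "horizontal" directions (the unitary group of a rank-$(n-1)$ positive definite hermitian $\sO$-lattice, essentially the group $U(\Lambda_0)$ for $\Lambda_0$ the relevant sublattice) by a piece that also acts on the one "vertical" toroidal coordinate. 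I would read off the eigenvalues of $g$ on the tangent space of the toroidal boundary as the eigenvalues on these $n-1$ horizontal directions together with one further eigenvalue on the vertical direction, the latter being a root of unity whose order is controlled by the torsion in the relevant finite group and which one can bound below.

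The second, and main, step is the Reid--Tai estimate at the boundary. Here I would split into the same two cases as in the interior argument. For $g$ that is not a quasi-reflection I expect to get the bound $\sum a_j/m \ge 1$ essentially for free once $n-1$ is large enough, because $g$ acts on a hermitian $\sO$-lattice of rank $n-1$ and the number of eigenvalues on which $g$ acts non-trivially is at least some fixed fraction of $n-1$, each contributing a definite positive amount; restricting the discriminant to be different from $-3,-4,-8$ is precisely what excludes the extra units ($\zeta_6,\zeta_4$, and the unit group issues for $D=-8$) that would otherwise produce small-order eigenvalues and spoil the estimate — this is the same phenomenon as in section \ref{SectionInterior}, and the bound $n\ge 13$ should come out of making "large enough" quantitative, with the vertical eigenvalue only ever helping. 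The delicate case is when $g$ \emph{is} a quasi-reflection, or when the horizontal part of $g$ is small order; then one cannot conclude directly and must instead pass to the quotient by the subgroup generated by all quasi-reflections, check that this subgroup acts as a group generated by genuine reflections (so the quotient is smooth in codimension one and the argument can be iterated), and re-run the estimate for the residual action. I expect this reduction to be the main obstacle: one has to identify the quasi-reflections in the boundary stabiliser explicitly, see that they come from reflections in the rank-$(n-1)$ hermitian lattice together with possibly the boundary divisor direction, and confirm that after quotienting, the remaining group elements all satisfy the strict Reid--Tai inequality for $n\ge 13$. Once both cases are settled, canonicity holds at every boundary point, and together with the interior result of section \ref{SectionInterior} this proves that $(\Gamma\backslash\CHn)^*$ has canonical singularities under the stated hypotheses.
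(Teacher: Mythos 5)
Your skeleton matches the paper's strategy: the interior Reid--Tai analysis of section \ref{SectionInterior}, then a boundary analysis over the $0$-dimensional cusps, split into a non-quasi-reflection case and a quasi-reflection case treated via the modified sum $\Sigma'$. But the heart of the boundary argument is asserted rather than proved. At a boundary fixed point $(0,\underline{w}_0)$ the differential of $g\in N(F)_\ZZ/U(F)_\ZZ$ is block triangular with horizontal part $\pm X$ and a single toroidal eigenvalue $\exp_a(\pm(v\underline{w}_0+w))$, and nothing a priori makes this eigenvalue ``only ever help'': if $X$ has order $1$ or $2$ the horizontal directions contribute only multiples of $\tfrac12$, and an uncontrolled toroidal eigenvalue with small fractional part would give $\Sigma(g)<1$ for a non-quasi-reflection no matter how large $n$ is. The paper closes exactly this gap in Proposition \ref{PropositiongnoQRfixingboundarySigmag_geq1} by a case analysis on the order $m_X$: for $m_X>2$ it reduces to Lemma \ref{LemmagnoQRR=1,2}; for $m_X=1$ the relations in $N(F)$ force $g$ to be trivial in $N(F)_\ZZ/U(F)_\ZZ$; and for $m_X=2$ it uses $g^2\in U(F)_\ZZ$ together with the fixed-point equation $\underline{w}_0=X\underline{w}_0+y$ to show $2(v\underline{w}_0+w)\equiv 0\mod\sigma$, so the toroidal eigenvalue is $\pm1$ and $g$ is either a reflection or has $\Sigma(g)\geq1$. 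Your heuristic that a non-quasi-reflection acts non-trivially on ``a fixed fraction of $n-1$'' directions is also false -- it may act through a single low-dimensional $\sV_d$ -- and indeed the paper's boundary estimate for non-quasi-reflections requires no lower bound on $n$ at all, only $D<D_0$.

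Relatedly, you misplace the source of the bound $n\geq13$: it does not come from making the non-quasi-reflection estimate quantitative, but from the quasi-reflection case at the boundary (Proposition \ref{Propositionh=gkQRSigmaprimegeq1}), where one splits off the exceptional eigenvector $\nu$ and applies the interior Theorem \ref{TheoremforgnoQRandngeq11} to the $(n-2)$-dimensional module $E_\qnf{D}^\perp/(E_\qnf{D}+\qnf{D}\nu)$, needing $n-2\geq11$ (the interior statement itself needs $n\geq12$ via Proposition \ref{PropositionSigmaprimegeq1interior}). Your plan to quotient by the full subgroup generated by quasi-reflections is also coarser than what is justified in the paper, which works one cyclic group at a time using the criterion of Lemma \ref{LemmacasesinteriorSigmaandSigmaprimegeq1} ($\Sigma(g)\geq1$ for non-quasi-reflections, $\Sigma'(g^f)\geq1$ when $h=g^k$ is a quasi-reflection), supplemented by Corollary \ref{Corollarynotfixingboundarydivisor} that no boundary divisor is fixed. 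To turn your sketch into a proof you would still need to supply all of these computations, i.e., essentially the content of section \ref{SectionBoundary}.
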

As in the orthogonal case this result can be used in the study of the Kodaira dimension of unitary modular varieties. This is the motivation of our work.

\subsection*{Acknowledgements}
This article is based on my PhD thesis. I want to thank my advisor K. Hulek for his guidance  and support. I would also like to thank V.A.~Gritsenko and G.K.~Sankaran for various helpful discussions.

\section{First definitions and properties}\label{SectionDefofObjects}
We first state the objects that we will study in the following sections.
Let $\qnf{D}$,  be an imaginary quadratic number field, i.e. $D<0$ a squarefree integer, and ${\sO}=\sO_\qnf{D}$ the corresponding ring of integers. Let $\Lambda$ be an $\sO$-lattice of signature $(n,1)$, i.e. a free $\sO$-module of rank $n+1$ with a hermitian form of signature $(n,1)$. Therefore we have an isomorphism $\Lambda\isom\sO^{n,1}$. The hermitian form given by this lattice will be denoted by $h(\cdot,\cdot)$.
When we fix a basis we get the isomorphism 
$$\psi:\ \Lambda\otimes_\sO\CC\isom\CC^{n,1}.$$
 The form induced by $\psi$ will also be denoted by $h(\cdot,\cdot).$

Starting with the lattice $\Lambda$ we define the $n$-dimensional complex hyperbolic space as
\begin{eqnarray}
  \CHn:=\{[\omega]\in\PP(\Lambda\otimes_\sO\CC);\ h(\omega,\omega)<0\}.
\end{eqnarray}
By definition $\CHn$ has a natural underlying lattice structure given by $\Lambda$. 
Due to G. Shimura there is the identification $\CHn\isom U(n,1)/(U(n)\times U(1))$, cf. \cite{MR0156001}.

For future use we define 
\begin{eqnarray}
  U(\Lambda):=\text{group of automorphisms of}\ \Lambda.
\end{eqnarray}
After choosing a suitable basis  $U(\Lambda)_\CC:=U(\Lambda)\otimes_\sO \CC\isom U(n,1)$.
Now let $\Gamma<U(\Lambda)$ be a subgroup of finite index.
We denote  the {\em $n$-dimensional ball quotient} by
\begin{eqnarray}
  \Gamma\backslash\CHn.
\end{eqnarray}
This ball quotient is a quasi-projective variety by \cite{MR0216035}. It can be compactified using toroidal compactification which gives rise to a unique projective variety $(\Gamma\backslash\CHn)^*$.

One can give a description of the ramification divisors. For this purpose let 
\begin{eqnarray}
  f_\Gamma:\ \CHn\longra \Gamma\backslash\CHn\label{mapf_Gamma}
\end{eqnarray}
be the quotient map. The elements fixing a divisor in $\CHn$ are the quasi-reflections. Thus the ramification divisors of $f_\Gamma$ are the fixed loci of elements of $\Gamma$ acting as quasi-reflections.

As \cite{MR2336040} and \cite{MR1255698} did for the K3 (orthogonal) case we will investigate the local action on the tangent space. Fix a point $[\omega]\in\CHn$ and define the stabiliser of $[\omega]$:
\begin{eqnarray}
  G:=\Gamma_{[\omega]}:=\{g\in\Gamma;\ g[\omega]=[\omega]\}.
\end{eqnarray}
This group is finite by results of \cite[4.1.2]{MR1685419} or \cite[pp. 1]{MR0314766}. Next define for $\omega\in\Lambda\otimes_\sO\CC$ the line $\WW:=\CC\omega$ corresponding to $[\omega]$. Then we can define the following sublattices of the lattice $\Lambda$:
\begin{definition}
  \begin{eqnarray*}
    S:=\WW^\perp\cap \Lambda,\ T:=S^\perp\cap\Lambda,
  \end{eqnarray*}
where the orthogonal complements are taken with respect to the form $h(\cdot,\cdot)$.
\end{definition}
As before we can complexify these lattices. We denote the resulting vector spaces by
$$S_\CC:=S\otimes_\sO\CC,\ T_\CC:=T\otimes_\sO\CC.$$
Now we have to study some properties of these lattices. Some proofs will be similar to those in \cite[2.1]{MR2336040}.
\begin{lemma}\label{LemmaScapT=0}
  $S_\CC\cap T_\CC=\{0\}$.
\end{lemma}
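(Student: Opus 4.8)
The plan is to show that the two subspaces $S_\CC$ and $T_\CC$ are orthogonal to each other, and then use the fact that the Hermitian form, while indefinite on $\Lambda\otimes_\sO\CC$, is definite on each of them; a nonzero vector in the intersection would then be isotropic on a space where the form is definite, a contradiction. Concretely, $S_\CC = \WW^\perp$ (the full orthogonal complement inside $\Lambda\otimes_\sO\CC$, since complexification commutes with taking orthogonal complements for a nondegenerate form over a field), and likewise $T_\CC = S_\CC^\perp$. Hence $S_\CC \perp T_\CC$ by definition, so for any $v \in S_\CC \cap T_\CC$ we have $h(v,v)=0$.

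Next I would check the signatures. Since $[\omega]\in\CHn$ we have $h(\omega,\omega)<0$, so $\WW=\CC\omega$ is a negative-definite line; as $h$ has signature $(n,1)$, its restriction to $\WW^\perp = S_\CC$ is positive definite of rank $n$. Therefore $h|_{S_\CC}$ is definite, forcing $v=0$ for any isotropic $v\in S_\CC$, and in particular for any $v \in S_\CC\cap T_\CC$. This already gives the claim; one does not even need to analyze $T_\CC$ separately, though it is worth noting $T_\CC$ contains $\WW$ and $h|_{T_\CC}$ has signature $(\operatorname{rk}_\sO T - 1, 1)$.

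The only genuinely delicate point is the identification $S_\CC = \WW^\perp$ (orthogonal complement taken in the complexification): one must make sure that $(\WW^\perp\cap\Lambda)\otimes_\sO\CC$ really equals $\WW^\perp$ as subspaces of $\Lambda\otimes_\sO\CC$. This holds because $h$ is nondegenerate, so $\WW^\perp$ has dimension $n$ over $\CC$, and $S=\WW^\perp\cap\Lambda$ is an $\sO$-lattice of rank $n$ that spans $\WW^\perp$; hence $S_\CC = \WW^\perp$. I expect this bookkeeping with orthogonal complements of lattices versus their complexifications to be the main (minor) obstacle; the signature argument itself is immediate.
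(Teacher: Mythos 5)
Your core argument is the same as the paper's: any $x\in S_\CC\cap T_\CC$ is isotropic because $T_\CC\perp S_\CC$, and $h$ is positive definite on $S_\CC$ because $S_\CC$ sits inside $\WW^\perp$, which has signature $(n,0)$ since $\WW$ is a negative line; hence $x=0$.

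However, the claim you single out as the delicate point, namely $S_\CC=\WW^\perp$, is false in general, as is its justification that $S=\WW^\perp\cap\Lambda$ is a rank-$n$ lattice spanning $\WW^\perp$. Here $[\omega]$ is an arbitrary point of $\CHn$, so the line $\WW=\CC\omega$ is usually not defined over $\qnf{D}$: for $[\omega]$ outside the countable union of hyperplane sections $\{h(v,\cdot)=0\}$, $v\in\Lambda\setminus\{0\}$, one has $S=\WW^\perp\cap\Lambda=\{0\}$, so $S_\CC$ is typically a proper subspace of $\WW^\perp$ (this is precisely why $S$ and $T$ are introduced, in analogy with the Picard and transcendental lattices in the orthogonal case). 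The principle that complexification commutes with orthogonal complements applies to subspaces defined over $\qnf{D}$ --- so your identification $T_\CC=S_\CC^\perp$ is legitimate, since $S$ spans a $\qnf{D}$-subspace --- but it does not apply to $\WW$. Fortunately the equality is not load-bearing: all you need is the inclusion $S_\CC\subset\WW^\perp$, which is immediate from the definition of $S$, together with the fact that positive definiteness passes to subspaces. With that replacement your proof is correct and coincides with the paper's.
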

\begin{proof}
  Let $x\in S_\CC\cap T_\CC$. Then $h(x,x)=0$, since $x\in T_\CC=S_\CC^\perp$. Therefore it suffices to show that $h(\cdot,\cdot)$ is positive definite on $S_\CC$. Consider $\WW\subset\Lambda_\CC$ with $\CC$-basis $\{\omega\}$. Hence  $h(\omega,\omega)<0$ as $[\omega]\in\CHn$. Hence the hermitian form has signature $(0,1)$ on $\WW$ and thus signature $(n,0)$ on $\WW^\perp$. By definition $S_\CC\subset \WW^\perp$ and the result follows.
\end{proof}
To describe the singularities we will study the action of the stabiliser $G$ on the tangent space $T_{[\omega]}\CHn$. Therefore we need a more concrete description.
\begin{remark}
The tangent space is known for the Grassmannian variety $G(1,n+1)$, e.g. \cite[Chapter II, \S2]{MR770932}. Hence we get $T_{[\omega]}\CHn=\Hom(\WW,\CC^{n+1}/\WW)$.
\end{remark}
From now on we denote this tangent space by $V:=\Hom(\WW,\CC^{n+1}/\WW)$ and investigate the quotient $G\backslash V$ in more detail.

\subsection{Representations of cyclic groups over quadratic number fields}
Before we state first results we have to study  the beviour of representations of the cyclic group $\Zd$ over a given quadratic number field for an integer $d>1$. We denote the $d$th cyclotomic polynomial by $\phi_d$. 
The classification of irreducible representations will depend on whether $\phi_d$ is irreducible over $\qnf{D}$ or not. For the following we denote by $\left(\frac{\cdot}{\cdot}\right)$ the {\em Kronecker symbol}.
\begin{proposition}\label{PropositionirredRepsoverqnf}
  Let $\rho:\ \Zd\longra\Aut(W)$ be a representation of $\Zd$ on the $d$-dimensional vector space $W$ over $\qnf{D}$. Then
\begin{itemize}
  \item[(i)] there is a unique irreducible faithful representation $V_d$ if $\phi_d$ is irreducible. The eigenvalues of $\rho|_{V_d}(\zeta_d)$ are the primitive $d$th roots of unity.
\item[(ii)] there are two irreducible faithful representations $V_d',V_d''$ if $\phi_d$ is reducible. The eigenvalues of $\rho|_{V_d'}(\zeta_d)$ are the primitive $d$th roots of unity $\zeta_d^a$ with $\kronecker{D}{a}=1$ for $a\in\Zdunits$. The eigenvalues of $\rho|_{V_d''}(\zeta_d)$ are $\zeta_d^a$ for the remaining $a\in\Zdunits$, i.e. the $a$ with $\kronecker{D}{a}=-1$.
\end{itemize}
\end{proposition}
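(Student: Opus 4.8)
The plan is to study the $\qnf{D}$-representation theory of $\Zd$ by base change from $\QQ$ and to track what happens to the rational irreducible representation attached to the character $\phi_d$. Over $\QQ$ the group algebra $\QQ[\Zd]$ decomposes as $\prod_{e\mid d}\QQ(\zeta_e)$, so there is a unique irreducible $\QQ$-representation $U_d$ on which $\zeta_d$ acts with primitive $d$th roots of unity as eigenvalues; it has dimension $\varphi(d)=\deg\phi_d$ and underlying module $\QQ(\zeta_d)$. Tensoring with $\qnf{D}$, I would analyse $\qnf{D}\otimes_\QQ\QQ(\zeta_d)=\qnf{D}[x]/(\phi_d)$: by the theory of simple algebras this is either a field (when $\phi_d$ stays irreducible over $\qnf{D}$) or a product of two copies of a field (when $\phi_d$ factors), and it cannot split further since the two fields $\qnf{D}$ and $\QQ(\zeta_d)$ have a common subfield structure controlled by $\QQ(\sqrt{D})\subseteq\QQ(\zeta_d)$. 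This dichotomy is exactly the split of case (i) versus case (ii), and it already gives the count of irreducible faithful $\qnf{D}$-representations (one, resp. two), each of the asserted dimension.

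Next I would identify precisely when $\phi_d$ becomes reducible over $\qnf{D}=\QQ(\sqrt D)$, namely iff $\QQ(\sqrt D)\subseteq\QQ(\zeta_d)$, i.e. iff $\sqrt D$ lies in the cyclotomic field $\QQ(\zeta_d)$. Here I would invoke the classical fact that $\QQ(\sqrt{d^*})\subseteq\QQ(\zeta_d)$ where $d^*=(-1)^{(d-1)/2}d$ for odd $d$ and the analogous statement via the conductor for general $d$; equivalently, $\qnf D\subseteq\QQ(\zeta_d)$ exactly when the quadratic field $\qnf D$ has conductor dividing $d$. In that case $\Gal(\QQ(\zeta_d)/\QQ)=\Zdunits$ has a unique index-two subgroup fixing $\qnf D$, and by quadratic reciprocity / the standard description of this subgroup it is precisely $\{a\in\Zdunits:\kronecker Da=1\}$. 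The two factors in $\qnf D[x]/(\phi_d)$ correspond to the two cosets, hence the eigenvalues of $\zeta_d$ on $V_d'$ are the $\zeta_d^a$ with $\kronecker Da=1$ and on $V_d''$ the remaining ones; in case (i) all primitive roots occur in the single representation $V_d$. Faithfulness is immediate in every case because a primitive $d$th root of unity occurs as an eigenvalue, so $\zeta_d$ acts with order exactly $d$.

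The main obstacle is the second step: pinning down the factorisation behaviour of $\phi_d$ over $\qnf D$ and, when it factors, matching the Galois-theoretic partition of $\Zdunits$ with the Kronecker-symbol condition $\kronecker Da=1$. This requires the precise statement that the quadratic subfield of $\QQ(\zeta_d)$ cut out by the kernel of $a\mapsto\kronecker Da$ is exactly $\qnf D$ — a reciprocity computation that is clean for $D\equiv 1\pmod 4$ but needs the usual care with the factor $2$ (conductor $4D$ versus $D$) when $D\equiv 2,3\pmod 4$. I would handle this by reducing to prime discriminants via the conductor-discriminant formula and multiplicativity of both sides, so that the whole matching follows from the single prime case of quadratic reciprocity together with the formulas for $\kronecker 2{\cdot}$ and $\kronecker{-1}{\cdot}$.
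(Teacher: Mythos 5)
Your proposal is correct, and in substance it is the argument the paper appeals to: the paper's ``proof'' is only a citation of Weisner's results on when cyclotomic polynomials become reducible over quadratic fields, combined with ``standard calculations in representation theory''. Your base change of the group algebra, i.e.\ the analysis of $\qnf{D}\otimes_\QQ\QQ(\zeta_d)\isom\qnf{D}[x]/(\phi_d)$, is exactly that standard calculation, and instead of citing Weisner you prove the needed number-theoretic input directly: $\phi_d$ is reducible over $\qnf{D}$ iff $\qnf{D}\subseteq\QQ(\zeta_d)$, iff the conductor (the absolute value of the fundamental discriminant) of $\qnf{D}$ divides $d$, in which case the two irreducible factors correspond to the two cosets of the index-two subgroup $\{a\in\Zdunits:\ \kronecker{D}{a}=1\}$ of $\Gal(\QQ(\zeta_d)/\QQ)\isom\Zdunits$. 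Two details should be written out, both of which you already flag: the number of irreducible factors of $\phi_d$ over $\qnf{D}$ equals $[\qnf{D}\cap\QQ(\zeta_d):\QQ]\le 2$, which is the precise reason the algebra splits into at most two fields (rather than the somewhat vague ``common subfield structure''), and for $D\equiv 2,3\pmod 4$ the symbol $\kronecker{D}{a}$ agrees with the discriminant character $\kronecker{4D}{a}$ for odd $a$, which suffices here because in the reducible case $4|D|$ divides $d$, so every $a\in\Zdunits$ is odd. With these spelled out, uniqueness, faithfulness and the eigenvalue description follow as you say: the faithful irreducibles are the modules $\qnf{D}[x]/(g)$ with $g$ an irreducible factor of $\phi_d$, and their complex eigenvalues are the roots of $g$.
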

\begin{proof}
  This follows from results of L. Weisner \cite{MR1502846} and standard calculations in representation theory.
\end{proof}
\begin{remark}
For specific $D$ we can rephrase this as follows:
 \begin{itemize}
\item[(a)] If $D>0$, then for each eigenvalue $\zeta_d^a$ of $\rho|_{V_d'}(\zeta_d)$ the complex conjugate $\zeta_d^{d-a}$ is an eigenvalue as well.
\item[(b)] If $D<0$, then for each eigenvalue $\zeta_d^a$ of $\rho|_{V_d'}(\zeta_d)$ the complex conjugate $\zeta_d^{d-a}$ is an eigenvalue of $\rho|_{V_d''}(\zeta_d)$.
\end{itemize}
\end{remark}
As the Proposition shows we have two different irreducible representations in case (ii).
\begin{remarks}
  Let $d$ be a positive integer.
\begin{itemize}
  \item[(i)] Sometimes we do not want to specify which of the representations $V_d$ resp. $V_d'$ or $V_d''$ we take and only refer to $\sV_d$ which will denote the  appropriate representation in the given situation.
\item[(ii)] We define 
$\VV_d:=\sV_d\otimes_\qnf{D}\CC$.
\end{itemize}
\end{remarks}

\section{The interior}\label{SectionInterior}
In this section we study the decomposition of $\qnf{D}$-vector spaces associated to  the lattices $S$ and $T$ under the action of a cyclic group. This enables us to state  results on canonical singularities for ball quotients $\Gamma\backslash\CHn$.
\begin{lemma}
  $G$ acts on $S$ and $T$.
\end{lemma}
\begin{proof}
  $G$ acts on $\WW$ and on $\Lambda$, hence on $S=\WW^\perp\cap \Lambda$ and on $T=S^\perp\cap\Lambda$.
\end{proof}
\begin{lemma}
  The spaces $S_\CC$ and $T_\CC$ are $G$-invariant subspaces of the vector space $\Lambda_\CC$.
\end{lemma}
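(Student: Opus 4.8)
The plan is to bootstrap from the preceding lemma (that $G$ acts on $S$ and $T$) by a flatness argument; the only genuine content is that $S_\CC$ and $T_\CC$ really \emph{embed} into $\Lambda_\CC$ rather than merely mapping to it. First I would observe that $S=\WW^\perp\cap\Lambda$ and $T=S^\perp\cap\Lambda$ are \emph{primitive} sublattices of $\Lambda$: if $x\in\Lambda$ satisfies $kx\in S$ for some $0\ne k\in\sO$, then $h(kx,\omega)=0$ forces $h(x,\omega)=0$ since $k\ne 0$, so $x\in S$; the same argument with an arbitrary $s\in S$ in place of $\omega$ shows that $T$ is primitive. Hence the $\sO$-modules $\Lambda/S$ and $\Lambda/T$ are torsion free, and since $\sO$ is a Dedekind domain a finitely generated torsion-free module is projective, so the short exact sequences
$$0\to S\to\Lambda\to\Lambda/S\to 0\quad\text{and}\quad 0\to T\to\Lambda\to\Lambda/T\to 0$$
of $\sO$-modules split.

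Next I would apply $-\otimes_\sO\CC$. Because these sequences split they remain exact after tensoring, so the canonical maps $S_\CC\to\Lambda_\CC$ and $T_\CC\to\Lambda_\CC$ are injective and thereby identify $S_\CC$ and $T_\CC$ with subspaces of $\Lambda_\CC$. (Equivalently, one may simply note that $\CC$ is flat over $\sO$, being torsion free as an $\sO$-module via the fixed embedding $\qnf{D}\hra\CC$, and apply flatness directly to the inclusions $S\hra\Lambda$ and $T\hra\Lambda$.)

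Finally, the action of $G$ on $S$ and $T$ furnished by the previous lemma is by construction the restriction of the action of $G$ on $\Lambda$, so the inclusions $S\hra\Lambda$ and $T\hra\Lambda$ are $G$-equivariant; tensoring with $\CC$ over $\sO$ makes $S_\CC\hra\Lambda_\CC$ and $T_\CC\hra\Lambda_\CC$ $G$-equivariant, whence $S_\CC$ and $T_\CC$ are $G$-invariant subspaces of $\Lambda_\CC$, as claimed. I do not expect any real obstacle here: the one point worth spelling out is the primitivity (equivalently, the flatness) that guarantees the complexified sublattices inject into $\Lambda_\CC$.
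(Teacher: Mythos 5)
Your proof is correct, but it follows a genuinely different route from the paper's. The paper disposes of the lemma by a direct one-line computation with the hermitian form at the complexified level: for $y\in S_\CC$ and $g\in G$ one has $0=h(y,\omega)=h(g(y),g(\omega))=\overline{\alpha(g)}\,h(g(y),\omega)$, and since $\alpha(g)\neq 0$ this forces $h(g(y),\omega)=0$, i.e. $g(y)\in S_\CC$, with the argument for $T_\CC$ analogous; in effect it re-runs, over $\CC$, the same form-preservation argument that proved the preceding lemma, and it does not address how $S_\CC=S\otimes_\sO\CC$ sits inside $\Lambda_\CC$. You instead take the preceding lemma as the sole source of $G$-invariance and spend your effort on the functorial point: the inclusions $S\hra\Lambda$ and $T\hra\Lambda$ are $G$-equivariant, and they stay injective after $\otimes_\sO\CC$, so the images of $S_\CC$ and $T_\CC$ in $\Lambda_\CC$ are $G$-stable subspaces. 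That is legitimate and in fact more scrupulous, since it justifies the identification of $S_\CC$ and $T_\CC$ with subspaces of $\Lambda_\CC$, which the paper silently takes for granted. Two small remarks: the primitivity-plus-splitting detour is more than you need, because, as you note parenthetically, $\CC$ is torsion free and hence flat over the Dedekind domain $\sO$, which already gives injectivity for arbitrary $\sO$-submodules; and the paper's direct computation yields a slightly stronger fact, namely that $g$ preserves the whole orthogonal complement $\WW^\perp$ inside $\Lambda_\CC$, not merely the $\CC$-span of the lattice $S$.
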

\begin{proof}
  We will only  give a proof for $S_\CC$ as $T_\CC$ is similar. Let $x\in T_\CC$, $y\in S_\CC$, $\omega\in\WW$ and $g\in G$. Then 
\begin{eqnarray*}
  0=h(y,\omega)=h(g(y),g(\omega))=\overline{\alpha(g)}\cdot h(g(y),\omega).
\end{eqnarray*}
As $\alpha (g)\neq 0$ we get $h(g(y),\omega)=0$, i.e. $g(y)\in S_\CC$.
\end{proof}
The group $G$ has been defined as the stabiliser of $[\omega]$ and therefore the equation
$$g(\omega)=\alpha(g)\omega$$
holds for all $g\in G$, where 
$$\alpha:\ G\longra\CC^*$$
is a group homomorphism. Denote its kernel by
$  G_0:=\ker \alpha$.
Analogous to the previous define
$$S_\qnf{D}:=S\otimes_\sO \qnf{D}\ \text{and}\ T_\qnf{D}:=T\otimes_\sO\qnf{D}. $$
\begin{lemma}
  The group $G_0$ acts trivally on $T_\qnf{D}$.
\end{lemma}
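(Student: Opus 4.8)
The plan is to characterise $T_\qnf{D}$ intrinsically as the smallest $\qnf{D}$-rational subspace of $\Lambda_\qnf{D}:=\Lambda\otimes_\sO\qnf{D}$ whose complexification contains the line $\WW$, and then to observe that for $g\in G_0$ the fixed subspace $\{x\in\Lambda_\qnf{D};\ gx=x\}$ is such a subspace, whence it must contain $T_\qnf{D}$.

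First I would rewrite the defining conditions of $S$ and $T$ over $\qnf{D}$: a clearing-denominators (saturation) argument — if $x\in\Lambda_\qnf{D}$ then $mx\in\Lambda$ for some $m\in\ZZ_{>0}$ — gives $S_\qnf{D}=\WW^\perp\cap\Lambda_\qnf{D}$ and
\[
  T_\qnf{D}=\{x\in\Lambda_\qnf{D};\ h(x,s)=0\ \text{for all}\ s\in S_\qnf{D}\},
\]
i.e. $T_\qnf{D}$ is the orthogonal complement of $S_\qnf{D}$ inside $\Lambda_\qnf{D}$. Since $h$ is $\qnf{D}$-valued on $\Lambda_\qnf{D}\times\Lambda_\qnf{D}$ and nondegenerate there (having signature $(n,1)$), orthogonal complements of $\qnf{D}$-subspaces behave as usual: $(M^\perp)_\CC=(M_\CC)^\perp$ and $(M^\perp)^\perp=M$ for every $\qnf{D}$-subspace $M\subseteq\Lambda_\qnf{D}$. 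In particular $T_\CC=S_\CC^\perp\supseteq\WW$, the last inclusion using $S_\CC\subseteq\WW^\perp$ as in the proof of Lemma~\ref{LemmaScapT=0}.

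The key step would then be the following: if $M\subseteq\Lambda_\qnf{D}$ is a $\qnf{D}$-subspace with $\omega\in M_\CC$, then $M\supseteq T_\qnf{D}$. Indeed $\WW=\CC\omega\subseteq M_\CC$ forces $(M_\CC)^\perp\subseteq\WW^\perp$, hence, intersecting with $\Lambda_\qnf{D}$, $M^\perp\subseteq\WW^\perp\cap\Lambda_\qnf{D}=S_\qnf{D}$; applying $(\cdot)^\perp$ once more gives $M=(M^\perp)^\perp\supseteq S_\qnf{D}^\perp=T_\qnf{D}$.

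Finally I would apply this to $M:=\{x\in\Lambda_\qnf{D};\ gx=x\}$ for an arbitrary $g\in G_0$. Because $g$ preserves $\Lambda$ it acts $\sO$-linearly, hence $\qnf{D}$-linearly on $\Lambda_\qnf{D}$, so $M$ is a $\qnf{D}$-subspace and $M_\CC=\{x\in\Lambda_\CC;\ gx=x\}$ (kernels of linear maps are unaffected by field extension). As $g\in G_0=\ker\alpha$ we have $g\omega=\omega$, so $\omega\in M_\CC$; the key step then gives $T_\qnf{D}\subseteq M$, i.e. $g$ acts trivially on $T_\qnf{D}$, and since $g$ was arbitrary this proves the lemma. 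I do not expect a serious obstacle; the only delicate points are keeping track of which subspaces are genuinely defined over $\qnf{D}$ and checking that passing to $\CC$ preserves the relevant dimensions — which is precisely where the nondegeneracy of $h$ over $\qnf{D}$ and the $\sO$-linearity of $g$ enter — together with the saturation argument needed to identify $S_\qnf{D}$ and $T_\qnf{D}$ with the arithmetic lattices tensored up, since a priori $\WW^\perp\cap\Lambda_\qnf{D}$ could exceed $(\WW^\perp\cap\Lambda)\otimes\qnf{D}$.
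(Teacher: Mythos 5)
Your proof is correct, but it takes a genuinely different route from the paper's. The paper argues pointwise and uses unitarity: for $x\in T_\qnf{D}$ and $g\in G_0$ one computes $h(\omega,x)=h(g(\omega),g(x))=h(\omega,g(x))$, so $x-g(x)\in\WW^\perp\cap\Lambda_\qnf{D}=S_\qnf{D}$; since $T_\qnf{D}$ is $G$-stable, $x-g(x)$ also lies in $T_\qnf{D}$, and Lemma \ref{LemmaScapT=0} (whose proof rests on $h$ being positive definite on $S_\CC$) forces $x=g(x)$. You instead characterise $T_\qnf{D}$, via saturation and a double-orthogonal-complement argument, as the smallest $\qnf{D}$-rational subspace of $\Lambda_\qnf{D}$ whose complexification contains $\omega$, and then note that the fixed subspace of $g$ is such a subspace. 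Your argument uses only that $g$ is $\qnf{D}$-linear on $\Lambda_\qnf{D}$ and fixes $\omega$ --- not that $g$ preserves $h$ --- and it replaces the definiteness input of Lemma \ref{LemmaScapT=0} by mere nondegeneracy of $h$ over $\qnf{D}$, so it is somewhat more general and robust; the price is the extra bookkeeping about rationality, the identifications $\WW^\perp\cap\Lambda_\qnf{D}=S_\qnf{D}$ and $T_\qnf{D}=S_\qnf{D}^\perp$, and the compatibility of orthogonal complements and kernels with base change to $\CC$, all of which you handle correctly and which the paper's two-line computation largely sidesteps (though it too implicitly uses the saturation identity). Both arguments are valid.
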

\begin{proof}
  Let $x\in T_\qnf{D}$ and $g\in G_0$. Then
$$  h(\omega,x)=h(g(\omega),g(x))=h(\omega,g(x))$$
and $x-g(x)\in\WW^\perp\cap\Lambda_\qnf{D}=S_\qnf{D}$. Therefore the result follows by Lemma \ref{LemmaScapT=0}.
\end{proof}
The quotient $G/G_0$ is a subgroup of $\Aut\WW\isom\CC^*$ and therefore cyclic. The order of this group will be denoted by $r_\omega:=\ord(G/G_0)$.

\begin{lemma}
   The space $T_\qnf{D}$ decomposes as a $G/G_0$-module
\begin{itemize}\label{LemmaDecompofTasGmodG0module}
  \item[(i)] into a direct sum of $V_{r_\omega}$'s, i.e. $\varphi(r_\omega)$ divides $\dim T_\qnf{D}$, if $V_{r_\omega}$ is irreducible over $\qnf{D}$,
\item[(ii)] into a direct sum of $V_{r_\omega}'$'s and $V_{r_\omega}''$'s, in particular $\frac{\varphi(r_\omega)}{2}$ divides $\dim T_\qnf{D}$, if there exist a decomposition $V_{r_\omega}=V_{r_\omega}'\oplus V_{r_\omega}''$ over $\qnf{D}$.
\end{itemize}
\end{lemma}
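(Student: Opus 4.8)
The plan is to exploit the description of $T_\qnf{D}$ as a faithful $\qnf{D}$-representation of the cyclic group $G/G_0$ of order $r_\omega$, together with the classification of irreducible $\qnf{D}$-representations of cyclic groups from Proposition \ref{PropositionirredRepsoverqnf}. Since by the preceding lemmas $G_0$ acts trivially on $T_\qnf{D}$ while $G/G_0 \hookrightarrow \Aut\WW \isom \CC^*$ acts on $\WW$ by a faithful character, the action on $T_\qnf{D}$ is in fact faithful: if $g \in G/G_0$ acts trivially on $T_\qnf{D}$, then since $\WW \subset T_\CC$ (note $\omega \in T = S^\perp \cap \Lambda$ after tensoring, because $h(\omega, S) = 0$ by definition of $S$) the element $g$ would also act trivially on $\WW$, forcing $g = \id$.

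The first step is therefore to record that $T_\qnf{D}$ is a faithful $\qnf{D}[\Zd]$-module with $d = r_\omega$. The second step is to invoke Maschke's theorem: since $\qnf{D}$ has characteristic zero, $T_\qnf{D}$ decomposes as a direct sum of irreducible $\qnf{D}$-representations of $\Zd$, and the irreducible ones are exactly those corresponding to the divisors of $d$, namely (in the notation of Proposition \ref{PropositionirredRepsoverqnf}) the $V_e$, resp. $V_e'$ and $V_e''$, for $e \mid d$. The third and key step is to argue that faithfulness forces at least one summand to be a \emph{faithful} irreducible, i.e. one with $e = d$: if every irreducible summand had $e$ a proper divisor of $d$, then the element $\zeta_d \in \Zd$ of order $d$ would act with an eigenvalue that is a primitive $e$th root of unity for some $e < d$ on each block, hence act as an element of order strictly less than $d$ on the whole space, contradicting faithfulness. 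Since $V_d$ (resp. $V_d'$, $V_d''$) has dimension $\varphi(d)$ (resp. $\varphi(d)/2$), this already shows $\varphi(r_\omega) \mid \dim T_\qnf{D}$ in case (i), but to get the full statement one must handle the other summands too.

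For the remaining summands, the argument is that they are themselves forced to be faithful as well, so that \emph{every} irreducible constituent has $e = r_\omega$. Indeed, if $W'$ is the sum of the non-faithful blocks, then $\Zd$ acts on $W'$ through a proper quotient $\Zd / \Z(d/m)\Z$; but the subgroup of $G/G_0$ acting trivially on $W'$ is nontrivial, and by the same reasoning as above (using $\WW \subset T_\CC$ and the fact that distinct $G/G_0$-invariant pieces cannot absorb the character of $\WW$ separately) one derives a contradiction with faithfulness — more carefully, one notes that $\WW$ occurs in exactly one irreducible constituent of $T_\CC$, namely one of the $\VV_{r_\omega}$'s with the primitive character by which $G/G_0$ acts on $\WW$, and the complementary $G/G_0$-invariant sublattice on which that character does not occur would have to be acted on trivially by a nontrivial element, which is impossible if we track which complex eigenvalues appear. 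Thus in case (i) $T_\qnf{D}$ is a sum of copies of $V_{r_\omega}$, so $\varphi(r_\omega) \mid \dim T_\qnf{D}$; in case (ii) it is a sum of copies of $V_{r_\omega}'$ and $V_{r_\omega}''$, so $\tfrac{\varphi(r_\omega)}{2} \mid \dim T_\qnf{D}$.

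\textbf{The main obstacle} I anticipate is the faithfulness bookkeeping in the last step: it is easy to see that \emph{some} faithful irreducible appears, but ruling out a mixture of a faithful block with lower-order blocks requires genuinely using that $\WW$ sits inside $T_\CC$ with a specific primitive character and that the $G/G_0$-action on the \emph{whole} of $T$ (not just complexified) is what pins down $r_\omega$ as the exact order. One must be careful that in case (ii) the character of $\WW$ lies in, say, $\VV_{r_\omega}'$, and then explain why the complementary summand cannot contain a copy of a non-faithful irreducible without violating that $g \mapsto g|_{T_\qnf{D}}$ is injective — this is where the clean statement "$\varphi(r_\omega)$ (resp. $\varphi(r_\omega)/2$) divides $\dim T_\qnf{D}$" really comes from, and it is the only non-formal point in the proof.
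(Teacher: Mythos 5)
Your skeleton — reduce to showing every irreducible constituent of $T_\qnf{D}$ is a faithful representation of the cyclic group $G/G_0$, then read off the dimensions from Proposition \ref{PropositionirredRepsoverqnf} — is the right one, and your first step (faithfulness of the $G/G_0$-action on $T_\qnf{D}$, via $\WW\subset T_\CC$) is correct. The gap is in the decisive step, where you must exclude non-faithful summands. Your claim that a nontrivial element of $G/G_0$ acting trivially on the sum $W'$ of non-faithful blocks ``derives a contradiction with faithfulness'' is a non sequitur: faithfulness only forbids a nontrivial element from acting trivially on \emph{all} of $T_\qnf{D}$, and an element killing $W'$ can perfectly well act nontrivially on the faithful block containing the character of $\WW$. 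No amount of purely representation-theoretic bookkeeping (``tracking which complex eigenvalues appear'') can close this: the module $V_{r_\omega}\oplus V_1^{\oplus k}$ is faithful, contains the primitive character of $\WW$ in its faithful piece, and is formally consistent with everything you have used. You correctly flag this as the only non-formal point, but you never supply the argument.

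What is actually needed — and what the paper's proof consists of — is the hermitian form. If $g\in G- G_0$ fixes some $x\in T_\CC$, then $h(\omega,x)=h(g(\omega),g(x))=\alpha(g)\cdot h(\omega,x)$, and since $\alpha(g)\neq 1$ this forces $h(\omega,x)=0$; as the fixed subspace of $g$ in $T_\CC$ is defined over $\qnf{D}$, such an $x$ lies (after clearing denominators) in $\WW^\perp\cap\Lambda\otimes_\sO\qnf{D}=S_\qnf{D}$, hence in $S_\CC\cap T_\CC=\{0\}$ by Lemma \ref{LemmaScapT=0}. So no nontrivial element of $G/G_0$ has eigenvalue $1$ anywhere on $T_\CC$, and this is exactly what kills every non-faithful constituent: a summand $V_e$ (or $V_e'$, $V_e''$) with $e$ a proper divisor of $r_\omega$ would give the nontrivial element $\zeta_{r_\omega}^{\,e}$ eigenvalue $1$. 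Without this geometric input your argument does not go through; with it, your Maschke-plus-classification framework yields the divisibility statements exactly as you intend.
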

\begin{proof}
   It remains to show, that the only element having $1$ as an eigenvalue on $T_\CC$ is identity element in $G/G_0$. This suffices as $G/G_0\isom\mu_{r_\omega}$ and by the Chinese Remainder Theorem $(\ZZ/r_\omega\ZZ)^*\isom ((\ZZ/p_1\ZZ)^*)^{a_1}\times\dots\times((\ZZ/p_t\ZZ)^*)^{a_t}$ for suitable $p_i$ and $a_i$. 
Assume that $g\in G-G_0$ with $g(x)=x$ for a $x\in T_\CC$. Then
$$h(\omega,x)=h(g(\omega),g(x))=\alpha(g)\cdot h(\omega,x).$$
As $a(g)\neq 1$  we get $h(\omega,x)=0$ and therefore $x=0$.
\end{proof}
\begin{corollary}
   For $g\in G$ the space $T_\qnf{D}$ decomposes as a $g$-module into a direct sum of $V_r$'s resp. $V_{r}'$'s or $V_{r}''$'s of dimension $\varphi(r)$ resp. $\frac{\varphi(r)}{2}$.
\end{corollary}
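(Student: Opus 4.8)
The plan is to apply Proposition \ref{PropositionirredRepsoverqnf} to the cyclic group generated by the image $\bar g$ of $g$ in $G/G_0$. Since $G_0$ acts trivially on $T_\qnf{D}$, the element $g$ acts on $T_\qnf{D}$ only through $\bar g$, so we set $r$ to be the order of $\bar g$ in $G/G_0$ (so $r=1$ when $g\in G_0$, in which case $V_1$ is the trivial representation and there is nothing to prove); note $r\mid r_\omega$ and $\langle\bar g\rangle\isom\Zr$. The goal is to show that $T_\qnf{D}$, regarded as a $\Zr$-module, is a direct sum of \emph{faithful} irreducible $\qnf{D}$-representations, since then Proposition \ref{PropositionirredRepsoverqnf} identifies each summand with $V_r$ of dimension $\varphi(r)$ when $\phi_r$ is irreducible over $\qnf{D}$, and with copies of $V_r'$ and $V_r''$, each of dimension $\varphi(r)/2$, when $\phi_r$ splits.

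First I would decompose $T_\qnf{D}=\bigoplus_i W_i$ into irreducible $\qnf{D}[\Zr]$-modules. Each simple factor of $\qnf{D}[\Zr]\isom\qnf{D}[x]/(x^r-1)$ is a field on which $\bar g$ acts by a primitive $d_i$th root of unity for some $d_i\mid r$, so the action on $W_i$ factors through a quotient $\Zr\twoheadrightarrow\ZZ/d_i\ZZ$ and the subgroup $K_i<\langle\bar g\rangle$ of order $r/d_i$ acts as the identity on $W_i$. In particular every element of $K_i$ has the eigenvalue $1$ on $W_i$, hence on $T_\CC$. But in the proof of Lemma \ref{LemmaDecompofTasGmodG0module} it was shown that the only element of $G/G_0$ having $1$ as an eigenvalue on $T_\CC$ is the identity; therefore $K_i$ is trivial, i.e. $d_i=r$, and each $W_i$ is a faithful irreducible representation of $\Zr$ over $\qnf{D}$.

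It then only remains to invoke Proposition \ref{PropositionirredRepsoverqnf}, which classifies the faithful irreducible representations of $\Zr$ over $\qnf{D}$ exactly as the unique $V_r$ of dimension $\varphi(r)$ if $\phi_r$ is irreducible over $\qnf{D}$, and as the two representations $V_r'$, $V_r''$, each of dimension $\varphi(r)/2$, if $\phi_r$ is reducible. Hence each $W_i$ is one of these, and collecting the summands yields the asserted decomposition. The only delicate point is the middle step — transporting the ``no eigenvalue $1$ except for the identity'' property from the ambient group $G/G_0$ to the individual irreducible $\langle\bar g\rangle$-summands — but this is immediate once one observes that a non-faithful summand would be annihilated by a nontrivial subgroup, each of whose elements would then act with eigenvalue $1$ on $T_\CC$, contradicting Lemma \ref{LemmaDecompofTasGmodG0module}.
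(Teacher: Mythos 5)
Your argument is correct and is essentially the paper's own: the paper proves the corollary by the same reduction as Lemma \ref{LemmaDecompofTasGmodG0module}, namely that no element of $G-G_0$ fixes a nonzero vector of $T_\CC$ (via $h(\omega,x)=\alpha(g)h(\omega,x)$), so every irreducible $\langle\bar g\rangle$-summand of $T_\qnf{D}$ is faithful and Proposition \ref{PropositionirredRepsoverqnf} applies. Your write-up just makes the semisimple decomposition and the faithfulness step explicit.
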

\begin{proof}
  Similar to Lemma \ref{LemmaDecompofTasGmodG0module}.
\end{proof}

\subsection{Reid-Tai criterion}
Let $M=\CC^k$, $A\in \GL(M)$ be of order $l$ and fix a primitive $l$th root of unity $\zeta$. Consider the eigenvalues $\zeta^{a_1},\dots,\zeta^{a_k}$ of $A$ on $M$, where $0\leq a_i<l$. Define the {\em Reid-Tai sum} of $A$ as
$$\Sigma(A):=\sum_{i=1}^k \frac{a_i}{l}.$$
\begin{theorem}[Reid-Tai criterion]
  Let $H$ be a finite subgroup of $\GL(M)$ without quasi-reflections. Then $M/H$ has canonical singularities if and only if 
$$\Sigma(A)\geq1$$
for every $A\in H$, $A\not=I$.
\end{theorem}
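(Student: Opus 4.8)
The plan is to reduce the criterion to the combinatorial computation of discrepancies on affine toric varieties, exploiting throughout that the quotient map $p\colon M\to M/H$ is \'etale in codimension one --- which is precisely the statement that $H$ has no quasi-reflections. I would first record two ingredients. \emph{(1)} Since $p$ carries no ramification divisor we have $K_M=p^{*}K_{M/H}$, and comparing log discrepancies on a compatible pair of resolutions of $M$ and of $M/H$ gives, for every divisorial valuation $w$ of $\CC(M)$ with restriction $v=w|_{\CC(M/H)}$, the identity $\hat a(w;M)=e(w/v)\cdot\hat a(v;M/H)$, where $\hat a$ is the log discrepancy (discrepancy plus one) and $e(w/v)$ the ramification index; in characteristic zero the inertia group $I_w\le H$ of $w$ is cyclic and $e(w/v)=|I_w|$. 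Consequently $M/H$ has canonical singularities if and only if $\hat a(w;M)\ge|I_w|$ for every divisorial valuation $w$ over $M$. \emph{(2)} If $A=\mathrm{diag}(\zeta^{b_1},\dots,\zeta^{b_k})\in\GL(M)$ has order $e$ and $\langle A\rangle$ contains no quasi-reflection, then $\CC^{k}/\langle A\rangle$ is the affine toric variety of the cone $\RR_{\ge0}^{k}$ in the lattice $N'=\ZZ^{k}+\ZZ\cdot\tfrac1e(b_1,\dots,b_k)$; the standard basis vectors are still primitive in $N'$, the divisor over $\CC^{k}/\langle A\rangle$ attached to the ray through a primitive point $u=(c_1,\dots,c_k)$ of $\RR_{\ge0}^{k}\cap N'$ has log discrepancy $c_1+\dots+c_k$, and every nonzero such $u$ satisfies $c_1+\dots+c_k\ge\Sigma(A^{t})$, where $t\in\{1,\dots,e-1\}$ is the class of $u$ in $N'/\ZZ^{k}$ (and $c_1+\dots+c_k\ge1$ if $u\in\ZZ^{k}$). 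Hence $\CC^{k}/\langle A\rangle$ has canonical singularities if and only if $\Sigma(A^{t})\ge1$ for all $t=1,\dots,e-1$. Ingredient (1) is a general fact about quasi-\'etale quotients and (2) is Reid's description of discrepancies of toric divisors.

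For the implication ``$M/H$ canonical $\Rightarrow\Sigma(A)\ge1$ for all $A\ne I$'' I argue by contraposition. Suppose $\Sigma(A)<1$ for some $A\in H$, $A\ne I$, of order $l$; after conjugating we may take $A=\mathrm{diag}(\zeta^{a_1},\dots,\zeta^{a_k})$ with $\zeta$ a primitive $l$-th root of unity and $0\le a_i<l$. Let $w$ be the monomial valuation of $\CC(M)=\CC(z_1,\dots,z_k)$ with $w(z_i)=a_i$, i.e.\ the valuation associated to the corresponding weighted blow-up, so that $\hat a(w;M)=\sum_i a_i=l\cdot\Sigma(A)<l$. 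On the other hand $A$ preserves $w$ and multiplies every $w$-homogeneous element of $\CC[z_1,\dots,z_k]$ by $\zeta$ raised to its $w$-degree, hence acts trivially on the residue field of $w$; therefore $\langle A\rangle\subseteq I_w$, so $|I_w|\ge l>\hat a(w;M)$. By ingredient (1), $M/H$ is not canonical.

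For the converse, assume $\Sigma(A)\ge1$ for all $A\in H\setminus\{I\}$, and let $w$ be an arbitrary divisorial valuation over $M$, with inertia group $I_w=\langle A_0\rangle$ and $e=|I_w|=\ord(A_0)$. Applying ingredient (1) to the quasi-\'etale cover $\CC^{k}\to\CC^{k}/\langle A_0\rangle$ (legitimate since $\langle A_0\rangle\le H$ also contains no quasi-reflection), with the same valuation $w$, gives $\hat a(w;M)=e\cdot\hat a\bigl(w|_{\CC(\CC^{k}/\langle A_0\rangle)};\CC^{k}/\langle A_0\rangle\bigr)$. The powers $A_0^{t}$ with $1\le t\le e-1$ lie in $H\setminus\{I\}$, so $\Sigma(A_0^{t})\ge1$, and ingredient (2) shows that $\CC^{k}/\langle A_0\rangle$ has canonical singularities; hence the log discrepancy on the right is $\ge1$ and $\hat a(w;M)\ge e=|I_w|$. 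By ingredient (1) once more, $M/H$ has canonical singularities.

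The step I expect to be the main obstacle is a careful proof of ingredient (1): that the absence of quasi-reflections is exactly what makes $p$ unramified in codimension one, that every divisorial valuation over $M$ has cyclic inertia, and that the ramification index equals $|I_w|$ and enters the discrepancy comparison as written. This is the valuation-theoretic and Galois-theoretic core of the argument, and essentially the only place where the hypothesis on $H$ is used; ingredient (2) and the two deductions above are then essentially formal.
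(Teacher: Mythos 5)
Your proposal is essentially correct, but it is worth saying that the paper does not prove this theorem at all: its ``proof'' is a citation of Reid \cite{MR927963}, (4.11), and Tai \cite{MR669424}, Theorem 3.3. What you have written is a self-contained argument along the standard modern lines behind those references: reduce to the valuation-theoretic criterion ``$M/H$ canonical iff $\hat a(w;M)\geq |I_w|$ for every divisorial valuation $w$ over $M$'' via the behaviour of log discrepancies under the quasi-\'etale cover $M\to M/H$ (a Koll\'ar--Mori 5.20-type statement), and then handle cyclic groups by the toric description of $\CC^k/\latt{A}$. The logic is sound, and you correctly avoid the trap that canonicity does \emph{not} simply descend along quasi-\'etale covers (only the rescaled inequality $\hat a(w;M)\geq|I_w|$ does the job); you also use the no-quasi-reflection hypothesis in exactly the right places: it makes both $M\to M/H$ and $\CC^k\to\CC^k/\latt{A_0}$ \'etale in codimension one, and it is precisely equivalent to the primitivity of the standard basis vectors in the lattice $N'$, which is what gives the log discrepancy formula $c_1+\dots+c_k$ in your ingredient (2). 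The remaining points to nail down are exactly the ones you flag, plus a few small normalizations: if $\gcd(a_1,\dots,a_k)>1$ the monomial valuation in the forward direction must be divided by the gcd to be divisorial (the inertia group is unchanged and the log discrepancy only drops, so the contradiction survives, and the same remark applies to the fractional-part points in (2) when they are not primitive); one should check that restrictions and extensions of divisorial valuations in the finite extension $\CC(M)/\CC(M/H)$ are again divisorial (Abhyankar), so that every divisorial valuation downstairs is reached; and the identification $e(w/v)=|I_w|$ with $I_w$ cyclic uses that we are in residue characteristic zero. Modulo these standard facts, your argument is a legitimate replacement for the citation, and its cyclic-reduction-plus-toric core is essentially the computation Reid performs in the cited source.
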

\begin{proof}
  \cite[(4.11)]{MR927963} and \cite[Theorem 3.3]{MR669424}.
\end{proof}

\subsection{Singularities in the interior}
Now we will apply the Reid-Tai criterion to $G\backslash V$. But as the techniques used do not work for general $D$ we will restrict ourself to the case $D<D_0$, where $D_0:=-3$. For a rational number $q$ we denote the {\em fractional part} of $q$ by $\{q\}$. 
Remember that the eigenvalue of $g$ on $\WW$ is $\alpha(g)$, where the order of $\alpha(g)$ is $r$. First we will give a bound on $r$.

\begin{lemma}\label{LemmagnoQRphi(r)geq10}
  Suppose $g$ does not act as a quasi-reflection on $V$ and $D<0$. Then the Reid-Tai sum satisfies $\Sigma(g)\geq1$, if $\varphi(r)\geq10$. If we assume additionally $D<D_0$ then this holds even for $\varphi(r)=4$.
\end{lemma}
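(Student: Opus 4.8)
The plan is to split the tangent space, throw away a part that is harmless for the Reid--Tai sum, and reduce the whole statement to an elementary inequality for a sum of fractional parts attached to a residue pattern modulo $r$; the one genuinely delicate case can then be isolated.

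\emph{Reduction to the $T$-part.} Since $h$ is positive definite on $S_\CC$ and $S_\CC\cap T_\CC=\{0\}$ (Lemma \ref{LemmaScapT=0}), we have $\Lambda_\CC=S_\CC\oplus T_\CC$ with $\WW\subseteq T_\CC$, hence, as $G$-modules,
\[
  V=\Hom(\WW,\Lambda_\CC/\WW)=\Hom(\WW,S_\CC)\oplus\Hom(\WW,T_\CC/\WW).
\]
The element $g$ acts on $\WW$ by $\alpha:=\alpha(g)$, a root of unity of modulus $1$ (as $g$ is unitary and $h(\omega,\omega)<0$) and of order $r$; write $\alpha=\zeta_r^{\,b}$ with $b\in\Zrunits$. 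On each $\Hom(\WW,-)$ the $\WW$-factor contributes the scalar $\alpha^{-1}$, so the eigenvalues of $g$ on $V$ are the $\alpha^{-1}\nu$ with $\nu$ an eigenvalue of $g$ on $S_\CC\oplus T_\CC/\WW$. All terms of the Reid--Tai sum are non-negative, so $\Sigma(g)\geq\Sigma_T(g)$, where $\Sigma_T(g)$ is the contribution of $\Hom(\WW,T_\CC/\WW)$, and it suffices to prove $\Sigma_T(g)\geq1$.

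\emph{Identification of $\Sigma_T(g)$.} By the Corollary, $g$ acts on $T_\CC$ with order exactly $r$ and with all eigenvalues primitive $r$th roots of unity, and $T_\CC$ decomposes into blocks: copies of $V_r\otimes\CC$ (each block contributing every primitive $r$th root once) if $\phi_r$ is irreducible over $\qnf D$, or copies of $V_r'\otimes\CC$ resp. $V_r''\otimes\CC$ (each contributing the $\zeta_r^{\,a}$ with $\kronecker Da=1$ resp. $-1$) if $\phi_r$ is reducible. As $\WW\subseteq T_\CC$ carries the eigenvalue $\zeta_r^{\,b}$, the exponent $b$ lies in one of the blocks, call it $A_0$; passing from $T_\CC$ to $T_\CC/\WW$ deletes just one eigenvalue equal to $\alpha$, whose Reid--Tai contribution $\alpha^{-1}\alpha=1$ is $0$. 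Hence
\[
  \Sigma_T(g)=\sum_{a\in A}\Big\{\tfrac{a-b}{r}\Big\}\ \geq\ \sum_{a\in A_0}\Big\{\tfrac{a-b}{r}\Big\},
\]
where $A\subseteq\Zrunits$ is the multiset of exponents of the eigenvalues of $g$ on $T_\CC$ (a union of blocks), and $|A_0|\in\{\varphi(r),\varphi(r)/2\}$. So it is enough to bound the single-block sum from below by $1$.

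\emph{The estimate.} For $\varphi(r)\geq10$ we have $|A_0|\geq5$, and $A_0-b$ is a translate of either the full unit group or of an index-two ("Kronecker") subgroup of $\Zrunits$ or its coset — in all cases at least five residues equidistributed modulo $r$ — so $\sum_{a\in A_0}\{(a-b)/r\}\geq1$. Concretely, in the full-block case the identity $\sum_{a\in\Zrunits}\{(a-b)/r\}=\tfrac{\varphi(r)}2+N_<-\tfrac{b\varphi(r)}r$ with $N_<=\#\{a\in\Zrunits:a<b\}$ gives this after splitting according to whether $b=1$, $2\le b\le r/2$ (where the sum is $\ge N_<\ge1$), or $b>r/2$ (where one uses $\#\{a\in\Zrunits:a\le x\}=\tfrac{\varphi(r)}{r}x+O(2^{\omega(r)})$); in the Kronecker case one writes the sum as $\tfrac12$ of the full-block sum plus $\tfrac{\pm1}{2r}\sum_a\kronecker Da\,((a-b)\bmod r)$ and bounds the character sum by P\'olya--Vinogradov, reducing the remaining inequality to finitely many small moduli, checked by hand. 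For $\varphi(r)=4$ the assumption $D<D_0$ forces $\phi_r$ to be \emph{irreducible} over $\qnf D$ — the only imaginary quadratic subfields of $\QQ(\zeta_r)$ for $r\in\{5,8,10,12\}$ are $\QQ(\sqrt{-1})$ and $\QQ(\sqrt{-2})$, both excluded by $D<D_0$ — so $|A_0|=4$, and one verifies $\sum_{a\in A_0}\{(a-b)/r\}\geq1$ directly for these four moduli and the finitely many admissible $b$.

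\emph{Where the difficulty lies.} The first two steps are formal; the work is in the estimate, and within it the reducible ("Kronecker") case, where one must know that a coset of an index-two subgroup of $\Zrunits$ is spread out enough modulo $r$ — an equidistribution statement handled by a character-sum bound together with a finite verification, and this is precisely the point forcing $\varphi(r)=4$ to be excepted unless $D<D_0$ removes the reducible case at that level. (Note that the hypothesis ``$g$ is not a quasi-reflection'' is automatic in the range considered: already the $|A_0|-1\ge 3$ eigenvalues of $g$ on $\Hom(\WW,T_\CC/\WW)$ coming from $A_0$ are all $\neq1$, so the fixed locus of $g$ on $V$ has codimension $\ge3$.)
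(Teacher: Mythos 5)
Your reduction is the same as the paper's: you pass to the block of $T_\CC$ (the paper's $\VV_r^\omega$) containing $\omega$, note that its contribution to $\Sigma(g)$ is $\sum_{a\in A_0\setminus\{b\}}\left\{\frac{a-b}{r}\right\}$ with $A_0$ either all of $\Zrunits$ or a Kronecker half of it, and discard the nonnegative rest; your parenthetical that the quasi-reflection hypothesis is automatic here is correct and harmless. The divergence is in how the one-block inequality is proved. The paper uses the crude bound $\sum_{j=1}^{\varphi(r)/2-1}j/r$ (distinct nonzero multiples of $1/r$) to cut down to an explicit finite list of $r$, and then verifies $\mc(r)\geq 1$ for those $r$ by computer; your $\varphi(r)=4$ observation is genuinely nicer than the paper's treatment — for $r\in\{5,8,10,12\}$ the condition $D<D_0$ does force $\phi_r$ irreducible over $\qnf{D}$, so only the full-block check over four moduli remains (though your list of imaginary quadratic subfields omits $\QQ(\sqrt{-3})\subset\QQ(\zeta_{12})$; the conclusion survives only because $D=-3$ is also excluded by $D<D_0$).

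The gap is in your ``estimate'' paragraph, which is where the lemma actually lives. Five residues in a coset of $\Zrunits$ or of an index-two subgroup are not ``equidistributed'' in any sense strong enough to give the bound for free, and the quantitative tools you invoke do not close the critical range. In the full-block case with $b>r/2$, the error $O(2^{\omega(r)})$ in $\#\{a\in\Zrunits:a\leq x\}=\frac{\varphi(r)}{r}x+O(2^{\omega(r)})$ can swamp $\varphi(r)/2$ exactly when $\varphi(r)$ is barely $\geq 10$ (e.g.\ $r=42$, $\varphi=12$, $2^{\omega(r)}=8$), so that subcase also needs a finite verification you do not acknowledge. Worse, in the Kronecker case the relevant moduli are small — $\varphi(r)=10$ already forces $r\in\{11,22\}$, and the reducible character there has conductor $11$ — so P\'olya--Vinogradov gives an error term larger than the main term precisely where the statement is needed; any explicit constant in such a bound leaves a finite but large set of moduli, and ``checked by hand'' understates what is, in substance, the computer calculation of $\mc(r)$ that the paper performs. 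So the proposal reproduces the paper's reduction and then re-derives the need for a finite case check without carrying it out; the analytic machinery does not replace that check, it only (less efficiently than the paper's crude bound) bounds how many cases there are.
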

\begin{proof}
  We denote the copy of $V_r\otimes\CC$ resp. $V_r'\otimes\CC$ or $V_r''\otimes\CC$ that contains $\omega$ by $\VV_r^\omega$. Now choose a primitive $m$th root of unity $\zeta$ and let $0<k_i<r$ be the $\varphi(r)$ integers coprime to $r$. We consider the eigenvalue $\alpha(g)=\zeta^{\frac{mk_1}{r}}$ of $g$ on $\WW$, i.e. on $\WW^\vee$ this leads to the eigenvalue $\overline{\alpha(g)}=:\zeta^{\frac{mk_2}{r}}$. Additionally we have to consider the eigenvalues of $g$ on $V_r^\omega\cap\CC^{n+1}/\WW$, where the definition of $V_r^\omega$ is similar to the one of $\VV_r^\omega$. Thus on this space we will have eigenvalues $\zeta^{\frac{mk_i}{r}}$ for some $k_i\in A-\{k_1\}$. Here $A=A_D^r$ is a subset of $\{k_1,\dots,k_{\varphi(r)}\}$ depending on the decomposition behavior of $\phi_r$ with $k_1\in A$ and $\# A=\varphi(r)$ resp. $\#A=\frac{\varphi(r)}{2}$ (cf. Proposition \ref{PropositionirredRepsoverqnf}).\\
Now consider the eigenvalues of $g$ on $\Hom(\WW,\VV_r^\omega\cap\CC^{n+1}/\WW)$, which are $\zeta^\frac{mk_2}{r}\zeta^\frac{mk_i}{r}$ for $k_i\in A-\{k_1\}$. Thus $\Sigma(g)\geq \sum_{k_i\in A-\{k_1\}} \left\{\frac{k_2+k_i}{r}\right\}$.

First we want to show that there are only finitely many choices for $r$ that lead to a contribution less than $1$ to the Reid-Tai sum. Therefore look at the estimate $\sum_{k_i\in A-\{k_1\}} \left\{\frac{k_2+k_i}{r}\right\}\geq \sum _{j=1}^{\frac{\varphi(r)}{2}-1} \frac{j}{r}$ and study the prime decomposition $r=p_1^{a_1}\cdots p_s^{a_s}$, where we assume $p_i<p_j$ for $i<j$. It is now easy to show that this sum contributes at least $1$ to $\Sigma(g)$, unless we are in one of the following cases\\
\begin{minipage}{5cm}
  \begin{center}
  $r=2^a\cdot p^b\cdot q$
\end{center}
\begin{center}
\begin{tabular}{|c|c||c|c|}
\hline
a&b&p&q\\
\hline\hline
$<$3&1&3&$<$11\\
1&1&3&11\\
1&1&3&13\\
1&2&3&5\\
1&1&5&7\\
\hline
\end{tabular}
\end{center}
\end{minipage}
\begin{minipage}{4cm}
\begin{center}
   $r=p^aq^b$
\end{center}
 \begin{center}
\begin{tabular}{|c|c||c|c|}
\hline  
$a$& $b$& $p$& $q$\\ 
\hline\hline
1& 1& 2&$\leq$19\\
1& 1& 3&5,7\\
\hline
2&1&2&$<$11\\
2,3&2&2&3\\
1&2&2&5\\
\hline
3&1&2&$<$7\\
\hline
4&1&2&3\\
\hline
2&1&3&2\\
\hline
3&1&3&2\\
\hline
\end{tabular}
\end{center}
\end{minipage}
\begin{minipage}{4cm}
\begin{center}
   $r=p^a$
\end{center}
 \begin{center}
\begin{tabular}{|c||c|}
  \hline
a& p\\
\hline\hline
1&$<$11\\
\hline
2&3\\
\hline
$\leq5$&2\\
\hline
\end{tabular}
\end{center}
\end{minipage}
For these remaining values of $r$ we can calculate the contribution of $g$ on \linebreak $\Hom(\WW,\VV_r^\omega\cap\CC^{n+1}/\WW)$ to $\Sigma(g)$ in more detail as 
\begin{eqnarray*}
 \operatorname{mc}(r)&:=&\min_{\substack{D<0\\ \text{suitable}}}\min_{k_2\in A}  \sum_{\substack{k_i\in A-\{k_1\}\\ \kronecker{D}{k_i}=\kronecker{D}{r-k_2}}} \left\{\dfrac{k_2+k_i}{r}\right\}.
\end{eqnarray*}
By `suitable' we mean that we only consider number fields $\qnf{D}$ that lead to case (ii) in Proposition \ref{PropositionirredRepsoverqnf}. If there is no such number field we have to omit the first `$\min$' and the Kronecker symbol in the definition of $\operatorname{mc}(r)$. As there are only finitely many such number fields computer calculation yields $\operatorname{mc}(r)\geq1$ for $\varphi(r)\geq 10$ and $\varphi(r)=4$ if  we restrict to $D<D_0$.
\end{proof}
\begin{remark}\label{Remark_mcrforothervalues}
  The same calculations of $\operatorname{mc}(r)$ shows that $\Sigma(g)\geq1$ for $r=9,16,18$ and no restriction on $D<0$.
\end{remark}
\begin{lemma}\label{LemmagnoQRR=1,2}
  Assume that $g\in G$ does not act as a quasi-reflection on $V$. Additionally let $r=1,2$ and $D\neq -1,-2$. Then $\Sigma(g)\geq1$.
\end{lemma}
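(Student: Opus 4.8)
The plan is to exploit the fact that when $r=1$ or $r=2$, the element $g$ acts trivially on $\WW$ (up to sign), so the full force of the local analysis shifts to the action on $S_\CC$ and on the quotient $\CC^{n+1}/\WW$. Concretely, when $r=1$ we have $g\in G_0=\ker\alpha$, and when $r=2$ the eigenvalue on $\WW$ is $-1$; in either case $g$ fixes $[\omega]$ and acts $\sO$-linearly on $S=\WW^\perp\cap\Lambda$. Since $S_\CC$ carries a positive definite hermitian form (by the proof of Lemma \ref{LemmaScapT=0}), $g|_{S_\CC}$ is a unitary operator, hence diagonalizable with roots-of-unity eigenvalues. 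First I would write the eigenvalues of $g$ on $S_\CC$ as $\zeta^{b_1},\dots,\zeta^{b_n}$ and, using the identification $V=\Hom(\WW,\CC^{n+1}/\WW)$ together with $\CC^{n+1}/\WW\isom S_\CC$ (as $\WW$ is nondegenerate), express the eigenvalues of $g$ on $V$ as $\overline{\alpha(g)}\cdot\zeta^{b_i}$, i.e. $\zeta^{b_i}$ when $r=1$ and $-\zeta^{b_i}$ when $r=2$.

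The next step is a counting/rationality argument on $S_\qnf{D}$. Because $S_\qnf{D}$ is a $\qnf{D}$-vector space on which $g$ acts, Proposition \ref{PropositionirredRepsoverqnf} forces the eigenvalue multiset of $g|_{S_\CC}$ to be a union of Galois-stable blocks: for each order $m\mid\ord(g)$ appearing, either all primitive $m$th roots of unity occur with equal multiplicity (the irreducible case), or they split into the two conjugate halves indexed by the Kronecker symbol $\kronecker{D}{\cdot}$. The key point is that $g$ is \emph{not} a quasi-reflection on $V$, so at least two of the eigenvalues $\zeta^{b_i}$ are $\neq 1$ (in the $r=1$ case) — more precisely, the fixed subspace of $g$ on $V$ has codimension $\geq 2$. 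I would then show the Reid-Tai sum
$$\Sigma(g)=\sum_{i=1}^{n}\left\{\tfrac{b_i}{r'}\right\}\quad(\text{$r=1$ case, }r'=\ord(g))$$
is $\geq 1$ by pairing a nontrivial eigenvalue $\zeta^{b}$ with its complex conjugate $\zeta^{-b}$, which by part (b) of the Remark after Proposition \ref{PropositionirredRepsoverqnf} also occurs among the $b_i$ when $D<0$ (this is exactly where $D<0$, rather than $D>0$, is used): each such conjugate pair contributes $\{b/m\}+\{(m-b)/m\}=1$. The hypothesis $D\neq -1,-2$ enters to guarantee that $G_0$ (and the cyclic part for $r=2$) contains no extra units of the hermitian lattice that would let $g$ act as a quasi-reflection while still being nontrivial — equivalently, to rule out the sporadic small-order elements $\zeta_3,\zeta_4,\zeta_6$-type actions on a single $S$-eigenline that the excluded fields $\qnf{-1},\qnf{-2}$ would permit.

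For the $r=2$ case the eigenvalues on $V$ are $-\zeta^{b_i}$; writing $\ord(g)=2^\epsilon m$ appropriately and using that $-\zeta^{b_i}$ is again a root of unity, the same conjugate-pairing trick applies, now with the extra sign only helping (it replaces $\{b/m\}$ by $\{b/m + 1/2\}$, and $\{x\}+\{-x\}=1$ still holds for $x\notin\ZZ$, while $x\in\tfrac12+\ZZ$ gives contribution $\tfrac12+\tfrac12=1$). I would finish by checking the boundary situation where only two eigenvalues on $V$ are nontrivial and they happen to be a conjugate pair: then $\Sigma(g)=1$ exactly, which still satisfies the claimed inequality. The main obstacle I anticipate is the bookkeeping needed to pass cleanly between the $g$-eigenvalues on $S_\CC$, on $\CC^{n+1}/\WW$, and on $V$, and to verify that "not a quasi-reflection on $V$" really does force two independent nontrivial eigenvalues (a quasi-reflection on $V$ could in principle come from a single nontrivial eigenvalue on $S_\CC$ tensored against $\WW^\vee$); handling the interaction with the excluded discriminants $-1,-2$ in that edge case is the delicate point.
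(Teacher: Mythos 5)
Your central step does not work. You invoke part (b) of the Remark after Proposition \ref{PropositionirredRepsoverqnf} to conclude that, since $D<0$, the complex conjugate $\zeta^{-b}$ of any nontrivial eigenvalue $\zeta^{b}$ of $g$ on $S_\CC$ again occurs among the eigenvalues, and you then pair conjugates to obtain contributions of $1$ to $\Sigma(g)$. But that remark states the opposite of what you need: for $D<0$ the conjugate of an eigenvalue of $V_d'$ is an eigenvalue of the \emph{other} irreducible summand $V_d''$ (it is the real case $D>0$ in which conjugates stay inside one irreducible). Since the decomposition of $S_\qnf{D}$ as a $g$-module may contain $V_d'$ without $V_d''$, the conjugate eigenvalue need not occur at all, and two nontrivial eigenvalues need not contribute $1$: for $d=8$ and $D=-1$ the summand $V_8'$ produces eigenvalues $\zeta_8,\zeta_8^5$ on $V$, whose fractional parts sum to $3/4<1$. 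In particular, if your pairing argument were valid it would prove the lemma without the hypothesis $D\neq-1,-2$, which is false; that hypothesis is not about extra units forcing quasi-reflections (as you suggest), but is used precisely so that $\phi_8$ is irreducible over $\qnf{D}$, making the $d=8$ piece carry all four primitive eighth roots of unity and hence genuine conjugate pairs.

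The paper's proof has to confront exactly these split ``half'' eigenvalue sets indexed by the Kronecker symbol, and your proposal never does. The actual route is: since $\alpha(g)=\pm1$ and $g$ is not a quasi-reflection, an analogue of \cite[Proposition 2.9]{MR2336040} shows $g$ is not of order $2$ and $g^2$ acts trivially on $T_\CC$ but not on $S_\CC$, so some summand $\sV_d$ with $d>2$ occurs in $S_\qnf{D}$; the contribution of $\Hom(\WW,\VV_d)$ is then bounded below by the restricted sums $\sum_{(k_i,d)=1,\ \kronecker{D}{k_i}=\alpha}\left\{\tfrac{1}{r}+\tfrac{k_i}{d}\right\}$, which are estimated by $\sum_{j=1}^{\varphi(d)/2}\tfrac{j}{d}$ to reduce to finitely many $d$, and these are checked case by case (by computer); the only problematic value is $d=8$, eliminated by $D\neq-1,-2$ as above. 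A smaller inaccuracy in your write-up: $\CC^{n+1}/\WW$ is not $S_\CC$ but $S_\CC\oplus(T_\CC/\WW)$; this is harmless here because $g$ acts on $T_\CC$ through $\pm1$ when $r=1,2$, but the identification should be stated correctly. Without replacing the conjugate-pairing step by an analysis of the Kronecker-split eigenvalue sets (or some equivalent argument), the proposal does not prove the lemma.
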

\begin{proof}
  As $r=1,2$ we have $\alpha(g)=\pm1$. 
  With an analogous statement as in \cite[Proposition 2.9]{MR2336040} we get that $g$ is not of order $2$ and $g^2$  acts trivially on $T_\CC$ but not on $S_\CC$. Therefore let $g$ act  on the subspace $\Hom(\WW,\VV_d)\subset V$ as $\pm\sV_d$ with $d>2$, for a representation $\sV_d$ from the decomposition of $S_\CC$ as a $g$-module over $\qnf{D}$. This contributes at least 
\begin{eqnarray*}
\min_{\substack{D<0\\\text{suitable}}}\min_{r\in\{1,2\}}\min_{\alpha=\pm1}\sum_{\substack{(k_i,d)=1\\ \kronecker{D}{k_i}=\alpha}} \left\{\dfrac{1}{r}+\dfrac{k_i}{d}\right\}\geq \sum_{j=1}^{\frac{\varphi(d)}{2}} \dfrac{j}{d}.\label{ProofinequalityHom(WW,VVd)for_r=1,2}
\end{eqnarray*}
to $\Sigma(g)$. As in the proof of Lemma \ref{LemmagnoQRphi(r)geq10} we have to modify this expression if $\sV_d=V_d$ and with analogous arguments we can reduce this to a question about a finite number of $d$'s. Computer calculation for these $d$ shows that they contribute at least $1$ unless $d=8$. But $\sV_8=V_8$ for $D\neq-1,-2$ and therefore we can choose complex conjugate eigenvalues.
\end{proof}
Now we can state a general result if $g$ is not a quasi-reflection.
\begin{theorem}\label{TheoremforgnoQRandngeq11}
  Suppose $g\in G$ does not act as a quasi-reflection on $V$. Then $\Sigma(g)\geq1$, if $D<D_0$ and $n\geq11$.
\end{theorem}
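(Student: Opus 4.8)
The plan is to reduce the statement to the lemmas already proved by a case analysis on the order $r$ of the eigenvalue $\alpha(g)$ on $\WW$. By Lemma \ref{LemmagnoQRR=1,2} we may assume $r\geq 3$, since the cases $r=1,2$ are already handled (the restriction $D\neq-1,-2$ being subsumed in $D<D_0$). Likewise, by Lemma \ref{LemmagnoQRphi(r)geq10} we may assume $\varphi(r)<10$ and $\varphi(r)\neq 4$, so that $\varphi(r)\in\{2,6,8\}$. This leaves only finitely many possible values of $r$, namely $r\in\{3,4,6\}$ (for $\varphi(r)=2$), $r\in\{7,9,14,18\}$ (for $\varphi(r)=6$), and $r\in\{15,16,20,24,30\}$ (for $\varphi(r)=8$). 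Among these, $r=9,16,18$ are already disposed of by Remark \ref{Remark_mcrforothervalues}.

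First I would treat the small cases $r\in\{3,4,6\}$. Here $\varphi(r)=2$, so $g$ acts on $\WW$ with a primitive $r$th root of unity, and the hypothesis $n\geq 11$ forces the eigenspace decomposition of $g$ on $T_\CC$ and $S_\CC$ (via the corollary to Lemma \ref{LemmaDecompofTasGmodG0module} and the analogous decomposition of $S$) to contain many copies of the relevant $\VV_d$'s. The point is that $\dim V = n$ is large, so even though each individual representation $\VV_r$ or $\VV_r'$ contributes only a small amount to $\Sigma(g)$, the total over all $n$ eigenvalues of $g$ on $V=\Hom(\WW,\CC^{n+1}/\WW)$ is at least $1$ once $n\geq 11$. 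Concretely, on the $\Hom(\WW,\VV_d)$-summands with $d\geq 3$ one gets a uniform positive lower bound per summand, and summing over enough summands (guaranteed by $n\geq 11$ together with the divisibility constraints $\varphi(d)\mid\dim$ or $\tfrac{\varphi(d)}{2}\mid\dim$) yields $\Sigma(g)\geq 1$; the only danger is a summand on which $g$ acts by $\pm 1$ coming from $\WW$ paired with a conjugate-pair eigenvalue, which is exactly the situation controlled as in the proof of Lemma \ref{LemmagnoQRR=1,2}.

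The remaining cases are $r\in\{7,14\}$ with $\varphi(r)=6$ and $r\in\{15,20,24,30\}$ with $\varphi(r)=8$. For these I would invoke the quantity $\mc(r)$ introduced in the proof of Lemma \ref{LemmagnoQRphi(r)geq10}: by definition $\Sigma(g)\geq\mc(r)$ whenever $g$ acts nontrivially on the $\VV_r^\omega$-block, and the computer calculation referenced there gives $\mc(r)\geq 1$ for every $r$ with $\varphi(r)\geq 10$ or $\varphi(r)=4$ under $D<D_0$; one checks — by the same finite computation, now for $\varphi(r)\in\{6,8\}$ — that $\mc(r)\geq 1$ also for $r\in\{7,14,15,20,24,30\}$ provided $D<D_0$. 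If instead $g$ acts trivially on the $\VV_r^\omega$-block, then since $g$ is not a quasi-reflection it must act nontrivially, with a primitive $d$th root of unity for some $d\geq 3$, on some other block $\Hom(\WW,\VV_d)$, and then the argument of the previous paragraph (or directly the estimate in the proof of Lemma \ref{LemmagnoQRR=1,2}) applies.

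The main obstacle is organizational rather than conceptual: one must make sure the finitely many residual $r$ with $\varphi(r)\in\{2,6,8\}$ are each accounted for, either by the $\mc(r)\geq 1$ computation (which needs to be extended from $\varphi(r)\geq 10$ down to these values, using crucially $D<D_0$ to rule out the problematic fields $\qnf{-1}$, $\qnf{-2}$, and also $\qnf{-2}$ with discriminant $-8$), or by the large-$n$ counting argument that leverages $n\geq 11$. The threshold $n\geq 11$ is precisely what is needed so that, for the worst surviving $r$ (those with $\varphi(r)=2$, where per-summand contributions are smallest), the number of available summands in $V$ suffices to push $\Sigma(g)$ over $1$; a smaller $n$ would leave genuine counterexamples, which is why the bound cannot be improved by this method.
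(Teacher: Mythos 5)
Your reduction to finitely many $r$ (via Lemma \ref{LemmagnoQRR=1,2}, Lemma \ref{LemmagnoQRphi(r)geq10} and Remark \ref{Remark_mcrforothervalues}) matches the paper's strategy, and your treatment of $\varphi(r)=2$ is in essence the paper's counting argument. But the way you dispose of the remaining cases $r\in\{7,14,15,20,24,30\}$ contains a genuine error: you claim that the $\mc(r)$-computation extends to these $r$ and yields $\mc(r)\geq 1$ under $D<D_0$. It does not. These are precisely the values for which $\phi_r$ is reducible over a field with $D<D_0$ (namely $D=-7$ for $r=7,14$; $D=-5$ for $r=20$; $D=-6$ for $r=24$; $D=-15$ for $r=15,30$), and for those fields the contribution of the block containing $\omega$ is strictly less than $1$: it is $4/7$, $4/5$, $5/6$ and $11/15$ respectively. (Your fallback alternative, ``$g$ acts trivially on the $\VV_r^\omega$-block,'' never occurs, since $g$ acts there through $\alpha(g)$ of order $r\geq 3$.) So for these $r$ one cannot conclude from $\VV_r^\omega$ alone; one must combine its deficient contribution with the contributions of the other summands $\Hom(\WW,\VV_d)$, which forces a dimension count of the type $\dim\VV_r\cdot\lambda+\nu_1+\nu_2+2\nu_3+\dots=n+1$ together with explicit per-$d$ lower bounds (e.g.\ $1/30$ for $d=1,2$ when $r=15,30$), and only then does $n$ enter.

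This omission also explains why your closing diagnosis of the threshold is wrong: the binding constraint is not the $\varphi(r)=2$ case (there $n-1\geq 6$, i.e.\ $n\geq 7$, already suffices, since each $d\in\{1,2\}$ summand contributes at least $1/6$), but the case $D=-15$, $r\in\{15,30\}$, where the $\omega$-block gives $11/15$ and each remaining one-dimensional summand only $1/30$, so one needs $\nu_1+\nu_2\geq 8$, i.e.\ $n\geq 11$. As written, your argument proves the theorem only in the cases $\varphi(r)=2$ and in the cases already covered by the quoted lemmas; the cases that actually determine the hypothesis $n\geq 11$ are left unproved (indeed, asserted on the basis of a computation that comes out the other way).
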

\begin{proof}
  Let $m$ be the order of $g$ and $\zeta$ be a primitive $m$th root of unity. On the space $\Hom(\WW,\VV_d)\subset V$ the element $g$ has eigenvalues $\zeta^{\frac{mc}{r}}\zeta^{\frac{mk_i}{d}}$ for fixed $0<c<r$ with $(c,r)=1$, and $k_i\in A$, where $A=A_D^d$ is defined as in the proof of Lemma \ref{LemmagnoQRphi(r)geq10}, so $\# A=\dim_\CC \VV_d$. Therefore the contribution of $g$ on this subspace is given by
$$\sum_{k_i\in A}\left\{\frac{c}{r}+\frac{k_i}{d}\right\}.$$
This is greater or equal to $\sum_{j=1}^{\frac{\varphi(d)}{2}}\frac{j}{d}$ for $d\not\in\varphi\inv(\{2,4,6,8\})$, and this contributes less than $1$ if 
\begin{eqnarray}
  d&=&1,2,\dots,10,12,14,15,16,18,20,22,24,26,28,30,36,\notag\\&&40,42,48,54,60,66,84,90\label{listofdofroughestimation}
\end{eqnarray}
(as in the proof of Lemma \ref{LemmagnoQRR=1,2}). We can calculate the contribution for each $d$ and $r$, but to simplify calculations define
\begin{eqnarray}
  c_{\operatorname{min}}(d)&:=&\min_{0\leq a<d}\sum_{\substack{0<b<d\\(b,d)=1}}\left\{\dfrac{b+a}{d}\right\},\ \text{resp.}\\
c_{\operatorname{min}}^{\red}(d)&:=&\min_{\substack{D<0\\ \text{suitable}}}\min_{\alpha=\pm1}\min_{0\leq a<d}\sum_{\substack{0<b<d\\(b,d)=1\\ \kronecker{D}{b}=\alpha}}\left\{\dfrac{b+a}{d}\right\}.
\end{eqnarray}
If there exist at least one imaginary quadratic number field for which the cyclotomic polynomial $\phi_d$ is reducible we have to calculate $c_{\operatorname{min}}^{\red}(d)$ for those $D$. If there exists no such $D$ we will use $c_{\operatorname{min}}(d)$. Both expressions only depend on $d$ and are a lower bound for the contribution to $\Sigma(g)$ as shown in \cite[Proof of Theorem 2.10]{MR2336040}. By computer calculation all $d$ except $d=1,2,3,4,6,7,8,12,14,15,20,24,30$ contribute at least $1$ to the Reid-Tai sum. For these $d$ we get 
\begin{eqnarray*}
&  c_{\min}^{\red}(30)=11/15,\ c_{\min}^{\red}(24)=5/6,\  c_{\min}^{\red}(20)=4/5,\   c_{\min}^{\red}(15)=11/15,&\\ &c_{\min}^{\red}(14)=4/7,\ c_{\min}^{\red}(12)=1/3,\  c_{\min}^{\red}(8)=1/4,&\\
& c_{\min}^{\red}(7)=4/7,\ c_{\min}^{\red}(6)=0,\ c_{\min}^{\red}(4)=0,c_{\min}^{\red}(3)=0.&
\end{eqnarray*}
As we know, $T_\CC$ decomposes into a direct sum of $\VV_r$, while we can assume the space $S_\CC$ decomposes into a direct sum of $\VV_d$ where $d\in\{1,2,3,4,6,7,8,12,14,$ $15,20,24,30\}$. In the following we only consider $D<D_0$ and a count of dimensions leads to the equation
\begin{eqnarray}
  \dim \VV_r\cdot \lambda +\nu_1+\nu_2+2\nu_3+2\nu_4+2\nu_6+\frac{6}{2}\nu_7+\frac{4}{2}\nu_8&&\notag\\
+\frac{4}{2}\nu_{12}+\frac{6}{2}\nu_{14}+\frac{8}{2}\nu_{15}+\frac{8}{2}\nu_{20}+\frac{8}{2}\nu_{24}+\frac{8}{2}\nu_{30}&=&n+1,\label{Equationdimensioncountrepresentations}
\end{eqnarray}
where $\lambda$ denotes the multiplicity of $\VV_r$ in $T_\CC$ and $\nu_d$ denotes the multiplicity of $\VV_d$ in $S_\CC$. Note that we can assume $\sV_d=V_d'$ or $V_d''$ for $d\in\{7,8,12,14,15,20,$ $24,30\}$. If  not it would contribute at least $1$ to $\Sigma(g)$, as shown in  \cite[Theorem 2.10]{MR2336040}. For the quotient $\Lambda_\CC/\VV_r^\omega$ we denote by $\nu_r$ the multiplicity of $\VV_r$ in $\Lambda_\CC / \VV_r^\omega$ as a $g$-module, where $\VV_r^\omega$ as before denotes the copy that contains $\omega$. Now we can calculate the (minimal) contribution of $\Hom(\WW,\VV_d)$ to $\Sigma(g)$ as
\begin{eqnarray}
  \sum_{(a,d)=1}\left\{\dfrac{a}{d}+\dfrac{k_1}{r}\right\}\ \text{resp.}\ \min_{ D<D_0}\min_{\alpha=\pm1}\sum_{\substack{(a,d)=1\\ \kronecker{D}{a}=\alpha}}\left\{\dfrac{a}{d}+\dfrac{k_1}{r}\right\}.
\end{eqnarray}
According to Lemma \ref{LemmagnoQRphi(r)geq10} and Remark \ref{Remark_mcrforothervalues} we have to investigate the cases $r\in\{3,4,6\}=\varphi\inv(2)$, $r\in\{7,14\}\subset\varphi\inv(6)$ and $r\in\{15,20,24,30\}\subset\varphi\inv(8)$. Now we have to study different cases:
\begin{itemize}
  \item[(1)] Let $\varphi(r)=2$. The contributions of the $\VV_d$ with $\varphi(d)\geq4$ are greater or equal to $1$ and (\ref{Equationdimensioncountrepresentations}) becomes
\begin{eqnarray}
  \nu_1+\nu_2+2\nu_3+2\nu_4+2\nu_6=n+1-2=n-1.\notag
\end{eqnarray}
For the $6$ possible cases of the choice of $(r,k_1)$, namely $r\in\{3,4,6\}$ and $k_1\in\{1,r-1\}$, the other contributions are at least
\begin{center}
\begin{tabular}{|c|c|}
  \hline
$d$ & contribution\\
\hline\hline
1 &1/6 \\
\hline
2 & 1/6\\
\hline
3 & 1/3\\
\hline
4 & 1/2\\
\hline
6 & 1/3\\
\hline
\end{tabular}
\end{center}
In all cases we see $\Sigma(g)\geq1$ if $n-1\geq6$.
\item[(2)] Let $r=7,14$. We can assume $D=-7$ since if this is not the case explicit calculations show that$\VV_r^\omega$ will contribute at least $1$ to $\Sigma(g)$. Equation (\ref{Equationdimensioncountrepresentations}) becomes
\begin{eqnarray}
  \nu_1+\nu_2+2\nu_3+2\nu_4+2\nu_6+3\nu_7+3\nu_{14}=n-2\notag
\end{eqnarray}
and the contributions are
\begin{center}
\begin{tabular}{|c|c|}
  \hline
$d$ & contribution\\
\hline\hline
1 &1/14 \\
\hline
2 & 1/14\\
\hline
3 & 3/7\\
\hline
4 & 4/7\\
\hline
6 & 3/7\\
\hline
7 & 4/7\\
\hline
14 & 4/7\\
\hline
\end{tabular}
\end{center}
and $4/7$ from $\VV_r^\omega$. So we may assume that $\nu_3=\nu_4=\nu_6=\nu_7=\nu_{14}=0$, because otherwise the contribution will be $\geq1$. So $\Sigma(g)\geq1$, if $\nu_1+\nu_2\geq6$ resp. $n\geq8$.
\item[(3)] Let $r=15,20,24,30$. Analogously to the last case we can assume that $D=-5,-6,-15$.
\begin{itemize}
  \item[(a)] Let $D=-5$. Hence we get the equation
\begin{eqnarray}
  \nu_1+\nu_2+2\nu_3+2\nu_4+2\nu_6+4\nu_{20}=n-3.\notag
\end{eqnarray}
The contributions are 
\begin{center}
\begin{tabular}{|c|c|}
  \hline
$d$ & contribution\\
\hline\hline
1 &1/30 \\
\hline
2 & 1/30\\
\hline
3 & 5/12\\
\hline
4 & 8/15\\
\hline
6 & 5/12\\
\hline
20 & 4/5\\
\hline
\end{tabular}
\end{center}
and $4/5$ from $\VV_r^\omega$. So $\Sigma(g)\geq1$ unless $\nu_1+\nu_2\leq5$ resp. $n\leq8$.
\item[(b)] Let $D=-6$, giving the equation
\begin{eqnarray}
  \nu_1+\nu_2+2\nu_3+2\nu_4+2\nu_6+4\nu_{24}=n-3.\notag
\end{eqnarray}
The contributions of $\VV_{24}$ and $\VV_r^\omega$ are $5/6$. So $\Sigma(g)\geq1$ unless $\nu_1+\nu_2\leq4$ resp. $n\leq7$.
\item[(c)] The last case is $D=-15$. So we get the equation
\begin{eqnarray}
  \nu_1+\nu_2+2\nu_3+2\nu_4+2\nu_6+4\nu_{15}+4\nu_{30}=n-3.\notag
\end{eqnarray}
The contributions of $\VV_{15}$, $\VV_{30}$ and $\VV_r^\omega$ are $11/15$. So $\Sigma(g)\geq1$, if $\nu_1+\nu_2\geq8$ resp. $n\geq11$.
\end{itemize}
\end{itemize}
\end{proof}
Now we can state a first result about the singularities of ball quotients.
\begin{corollary}
  Let $D<D_0$ and $n\geq11$, then $\Gamma\backslash\CHn$ has canonical singularities away from the branch divisors.
\end{corollary}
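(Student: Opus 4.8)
\emph{Proof proposal.} The plan is to reduce this global statement to the local Reid--Tai computation of Theorem \ref{TheoremforgnoQRandngeq11}. Having canonical singularities is an analytically local property that may be checked one point at a time, so it suffices to describe the germ of $\Gamma\backslash\CHn$ at an arbitrary point $p$ lying off all branch divisors. First I would fix a preimage $[\omega]\in\CHn$ of $p$; by the reduction recalled in Section \ref{SectionDefofObjects} the germ of $\Gamma\backslash\CHn$ at $p$ is the germ at the origin of the quotient $G\backslash V$, where $G=\Gamma_{[\omega]}$ is the finite stabiliser acting on the tangent space $V=T_{[\omega]}\CHn=\Hom(\WW,\CC^{n+1}/\WW)$. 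The action of $G$ on $V$ is faithful, since an element fixing $[\omega]$ and acting trivially on $T_{[\omega]}\CHn$ is the identity on a neighbourhood of $[\omega]$, hence the identity in $U(\Lambda)$.

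The next step is to observe that $G$ acts on $V$ \emph{without} quasi-reflections. Indeed, the ramification divisors of $f_\Gamma$ are precisely the fixed loci of the quasi-reflections in $\Gamma$; if some $g\in G$ acted as a quasi-reflection it would fix a hypersurface through $[\omega]$, whose image would be a branch divisor through $p$, contradicting the choice of $p$. Here one uses that, in the exponential chart at $[\omega]$, the fixed locus of $g$ corresponds to its fixed subspace in $V$, so ``quasi-reflection on $\CHn$ fixing $[\omega]$'' and ``quasi-reflection on $V$'' mean the same thing. Consequently $G$ is a finite subgroup of $\GL(V)$ without quasi-reflections, so the Reid--Tai criterion applies: $G\backslash V$ has canonical singularities if and only if $\Sigma(g)\geq1$ for every $g\in G$ with $g\neq I$.

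Finally I would invoke Theorem \ref{TheoremforgnoQRandngeq11}: under the hypotheses $D<D_0$ and $n\geq 11$, every non-trivial element of $G$ --- which by the previous step does not act as a quasi-reflection on $V$ --- has Reid--Tai sum $\geq 1$. Hence the local model $G\backslash V$ has canonical singularities at each point $p$ away from the branch divisors, which is exactly the assertion.

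I do not anticipate a genuine obstacle in this corollary: the substantive content is entirely contained in Theorem \ref{TheoremforgnoQRandngeq11} and the representation-theoretic Lemmas \ref{LemmagnoQRphi(r)geq10} and \ref{LemmagnoQRR=1,2} on which it rests. The only points that deserve a line of care are the identification of the analytic germ with $G\backslash V$ and the compatibility of the two notions of quasi-reflection described above; both are standard, but should be stated explicitly so that the Reid--Tai criterion is applied in precisely the form in which it was quoted.
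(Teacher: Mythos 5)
Your argument is essentially the paper's own proof: the paper simply cites Theorem \ref{TheoremforgnoQRandngeq11}, the Reid--Tai criterion and the discussion of the map (\ref{mapf_Gamma}), which is exactly the local reduction to $G\backslash V$ away from branch divisors that you spell out. One small correction: the action of $G$ on $V$ need not be faithful (e.g.\ $-I\in\Gamma$ fixes $[\omega]$ and acts trivially on $V$, as the paper itself notes later), so the Reid--Tai criterion should be applied to the image of $G$ in $\GL(V)$; since kernel elements act trivially on $\CHn$ this changes nothing in the quotient or in the conclusion.
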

\begin{proof}
This directly follows from Theorem \ref{TheoremforgnoQRandngeq11}, the Reid-Tai criterion and the discussion for the map (\ref{mapf_Gamma}).
\end{proof}
For the rest of this section we will now study elements $h=g^k$ that act as quasi-reflections on the tangent space $V$. We will start to describe how $\Lambda_\qnf{D}$ decomposes as a $g$-module.
\begin{proposition}
   Let $h=g^k$ be a quasi-reflection on $V$ for $g\in G$ and $n\geq2$. As a $g$-module we have a decomposition of the form $$\Lambda_\qnf{D}\isom\sV_{m_0}\oplus\bigoplus_j\sV_{m_j}$$ for some $m_i\in\NN$. Then
\begin{enumerate}
  \item[(i)] $(m_0,k)=m_0$ and $2(m_j,k)=m_j$, or $2(m_0,k)=m_0$ and $(m_j,k)=m_j$ for $j\geq1$ in the cases $D<D_0$ and $D=-2$,
\item[(ii)] $(m_0,k)=m_0$ and $l(m_j,k)=m_j$, or $l(m_0,k)=m_0$ and $(m_j,k)=m_j$, $l\in\{2,4\}$, for $j\geq1$ in the case $D=-1$,
\item[(iii)] $(m_0,k)=m_0$ and $l(m_j,k)=m_j$, or $l(m_0,k)=m_0$ and $(m_j,k)=m_j$, $l\in\{2,3,6\}$, for $j\geq1$ in the case $D=-3$.
\end{enumerate}
\end{proposition}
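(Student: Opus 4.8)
The plan is to translate the quasi-reflection hypothesis into eigenvalue data for $h$ acting on all of $\Lambda_\CC$, and then to extract this from the $g$-module decomposition. Since $V=\Hom(\WW,\Lambda_\CC/\WW)$ and $h$ acts on $\WW$ by the scalar $\alpha(h)$, the eigenvalues of $h$ on $V$ are $\alpha(h)^{-1}$ times those of $h$ on $\Lambda_\CC/\WW$; because a quasi-reflection has eigenvalues $(1,\dots,1,\zeta)$ with $\zeta\neq 1$, this forces $h|_{\Lambda_\CC}$ to have eigenvalue $\alpha(h)$ with multiplicity $n$ and exactly one further eigenvalue $\beta\neq\alpha(h)$, of multiplicity one. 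Equivalently $\alpha(h)^{-1}h$ acts on $\Lambda_\CC$ as a non-trivial pseudo-reflection, and geometrically $h$ fixes pointwise a codimension-one sub-ball of $\CHn$ through $[\omega]$ and scales its orthogonal line by one other factor; I would prove this equivalence first.

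Next I would use that $\Lambda_\qnf D$ is a semisimple $\qnf D[\langle g\rangle]$-module whose irreducible constituents are, by Proposition~\ref{PropositionirredRepsoverqnf}, of the form $\sV_m$, writing $\Lambda_\qnf D\isom\sV_{m_0}\oplus\bigoplus_j\sV_{m_j}$. On $\VV_{m_i}=\sV_{m_i}\otimes_\qnf D\CC$ the generator $g$ acts with eigenvalues the primitive $m_i$-th roots of unity --- all of them if $\phi_{m_i}$ is irreducible over $\qnf D$, the $\kronecker{D}{\cdot}$-selected half otherwise --- so $h=g^k$ acts on $\VV_{m_i}$ with eigenvalues that are primitive $d_i$-th roots of unity, where $d_i:=m_i/(m_i,k)$. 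In particular $h|_{\VV_{m_i}}$ is a scalar multiple of the identity (necessarily a primitive $d_i$-th root of unity times $\id$) whenever $\dim_\CC\VV_{m_i}=1$ or $d_i\le 2$, and, one checks, essentially only then.

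Combining the two: $h$ has only the two eigenvalues $\alpha(h),\beta$ on $\Lambda_\CC$, each $\VV_{m_i}$ is $h$-stable with eigenvalues in $\{\alpha(h),\beta\}$, and since $\beta$ has multiplicity one at most one summand --- the one whose complexification meets the $\beta$-eigenline --- can fail to be scalar. Using $n\ge 2$ I would rule out that this $\beta$-eigenvector lies inside an irreducible summand of complex dimension $\ge 2$: such a summand would carry both $\alpha(h)$ and $\beta$, forcing $\varphi(d_i)=2$, and then for every other summand to realise the scalar $\alpha(h)$ --- a primitive $d_i$-th root of unity of order $\ge 3$ --- one would need $\qnf D=\QQ(\zeta_{d_i})$ together with a count of dimensions that, given $n\ge 2$, leaves no room. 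Hence $h$ is a scalar on every summand; after relabelling, $\sV_{m_0}$ may be taken to be a one-dimensional summand carrying one of the two scalars and $\bigoplus_j\sV_{m_j}$ the other, so that $h=\epsilon_0\,\id$ on $\VV_{m_0}$ and $h=\epsilon_1\,\id$ on $\bigoplus_j\VV_{m_j}$ with $\epsilon_0\neq\epsilon_1$, each $\epsilon$ a primitive $l$-th root of unity and $l(m_i,k)=m_i$ for the corresponding summand.

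Finally I would determine which orders $l$ can occur. A one-dimensional irreducible $\qnf D$-module on which $g^k$ acts by a primitive $l$-th root of unity exists only if $\zeta_l\in\qnf D$: this gives $l\in\{1,2\}$ for every $D$, and in addition $l=4$ exactly when $\qnf D=\QQ(i)=\QQ(\zeta_4)$ (the case $D=-1$, via the one-dimensional $\sV_4$) and $l\in\{3,6\}$ exactly when $\qnf D=\QQ(\sqrt{-3})=\QQ(\zeta_3)=\QQ(\zeta_6)$ (the case $D=-3$, via the one-dimensional $\sV_3,\sV_6$); for $D<D_0$ and for $D=-2$ nothing extra arises, since $\QQ(\zeta_3)$ and $\QQ(\zeta_4)$ are not subfields of those $\qnf D$. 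Encoding ``$\epsilon$ a primitive $l$-th root'' as $l(m_i,k)=m_i$ and ``$\epsilon=1$'' as $(m_i,k)=m_i$, and recording that $\sV_{m_0}$ may carry either the trivial scalar or the non-trivial one (this is the ``or'' in each of (i)--(iii)), one reads off exactly the three cases. The step I expect to be the main obstacle is the third paragraph: excluding every configuration in which $h$ fails to be scalar on some summand, and checking that the Kronecker-symbol bookkeeping of Proposition~\ref{PropositionirredRepsoverqnf}, together with $n\ge 2$ and the short list of imaginary quadratic subfields of small cyclotomic fields, admits no field $\qnf D$ beyond the three listed cases.
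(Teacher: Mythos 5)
Your proposal is correct and follows essentially the same route as the paper: the quasi-reflection hypothesis forces $h$ to have eigenvalue $\alpha(h)$ with multiplicity $n$ plus one exceptional eigenvalue of multiplicity one on $\Lambda_\CC$, one decomposes $\Lambda_\qnf{D}$ into irreducible $\qnf{D}[g]$-summands on which $h$ acts by primitive $m_i/(m_i,k)$-th roots of unity of equal multiplicity, one shows every summand is acted on by a scalar (the paper via its case list, you by locating the exceptional eigenvector, with $n\geq 2$ excluding the two-dimensional configuration exactly where the paper uses it), and the admissible scalar orders are read off from which cyclotomic polynomials acquire a linear factor over $\qnf{D}$. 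One minor slip: a two-dimensional exceptional summand need not have $\varphi(d_i)=2$, since $\varphi(d_i)=4$ with $\phi_{d_i}$ reducible (e.g.\ $d_i=8,12$) also gives a two-dimensional irreducible, but your subsequent argument (no imaginary quadratic field contains such a root of unity, and otherwise there is no room for further summands unless $n=1$) disposes of these cases just as well.
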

\begin{proof}
  As a $g$-module $\Lambda_\qnf{D}$ decomposes into $\sV_r^\omega\oplus\bigoplus_{i}\sV_{d_i}$ for some $d_i\in\NN$. As $h$ is a quasi-reflection on $V$, all but one of the eigenvalues  on $V$ must be $1$. First fix an $i$. Now  define $\sV_d:=\sV_{d_i}$ and $d':=\frac{d}{(k,d)}$, then the eigenvalues of $h$ on $\sV_d$ are primitive $d'$th roots of unity of multiplicity $\frac{\dim \sV_d}{\dim \sV_{d'}}$. We want to give restrictions on the $d_i$:
\begin{enumerate}
  \item[(1)] $\dim \sV_{d'}\leq2$: Assume that the dimension is at least $3$. One can choose three distinct eigenvalues $\zeta, \zeta', \zeta''$ on $\sV_{d'}$, such that $h$ has eigenvalues $\alpha(h)\inv\zeta, \alpha(h)\inv\zeta'$ and $\alpha(h)\inv\zeta''$ on $V$ and at most one of these eigenvalues can be $1$.
\item[(2)] $\frac{\dim\sV_d}{\dim \sV_{d'}}=2\Ra \dim\sV_{d'}=1$: Assume $\dim \sV_{d'}\geq2$ under the given condition. Denote two of the $\dim\sV_{d'}$ eigenvalues of multiplicity $2$ of $h$ on $\sV_d$ by $\zeta,\zeta'$. So one would have the eigenvalues $\alpha(h)\inv\zeta$ and $\alpha(h)\inv\zeta'$ of multiplicity $2$ on $V$.
\item[(3)] $\dim\sV_d\geq2,\dim\sV_{d'}=1\Ra \text{ the eigenvalue of } h \text{ on } \sV_d \text{ is } \alpha(h)$: If $\zeta$ is the eigenvalue of $h$ on $\sV_d$ with $\zeta\neq\alpha(h)$, then $\alpha(h)^{-1}\zeta\neq1$ would be an eigenvalue on $V$ of multiplicity $\dim\sV_d\geq2$.
\item[(4)] $\dim\sV_{d'}=2\Ra\dim\sV_d=2$: Let $\dim\sV_d>2$. There are two eigenvalues $\zeta\neq\zeta'$ of $h$ on $\sV_d$ of multiplicity greater or equal to $2$. Hence we have on $V$ the eigenvalues $\alpha(h)\inv\zeta$ and $\alpha(g)\inv\zeta'$ of the same multiplicity.
\item[(5)] The case $\dim\sV_{d'}=\dim\sV_d=2$ cannot occur: Let $\dim\sV_{d'}=\dim\sV_d=2$ with eigenvalues $\zeta,\zeta'$ of $h$ on $\sV_d$. Without loss of generality we can assume that $\zeta=\alpha(h)$. If not we would have eigenvalues $\alpha(h)\inv\zeta\neq1$ and $\alpha(h)\inv\zeta'\neq1$ on $V$. There can be no other summand $\sV_{d_1}$ in the decomposition of $\Lambda_\qnf{D}$, as this summand would give an eigenvalue $\neq1$ (the dimension of $\sV_{d'}$ has to be $1$, but as $\zeta=\alpha(h)$ and $\zeta$ is a primitive $d'$th root of unity, this cannot happen). There are  two eigenvalues of $h$ on $\VV_r^\omega$ (because of $\dim\sV_{d'}=2$) which we will call $\alpha(h)$ and $\zeta''$ with multiplicity $\frac{\dim\sV_r}{2}$ (the denominator is $\dim\sV_{d'}$). Therefore the multiplicity of the eigenvalues have to be $1$, because $\alpha(h)\inv\zeta''\neq1$ is an eigenvalue on $V$. But then we will have two eigenvalues $\neq1$ on $V$ (namely $\alpha(h)\inv\zeta'$ and $\alpha(h)\inv\zeta''$).
\end{enumerate}
Hence there follows $\dim\sV_{d'}=1$. Now we want to study $\sV_r$. Let $r':=\frac{r}{(k,r)}$. We claim that $\dim\sV_{r'}=1$. Suppose $\dim\sV_{r'}\geq2$.
\begin{enumerate}
  \item[(6)] $\dim\sV_{r'}\leq2$: Assume that $\dim\sV_{r'}>2$, i.e. $h$ has on $\sV_r^\omega$ at least three distinct eigenvalues $\alpha(h),\zeta,\zeta'$, which will give rise to eigenvalues $\alpha(h)\inv\zeta\neq1$ and $\alpha(h)\inv\zeta'\neq1$ on $V$.
\item[(7)] $\dim\sV_{r'}=2\Ra n=1$: We know  $\dim\sV_{d'}=1$ from above. Let $\zeta$ be the eigenvalue of $h$ on $\sV_d$ of multiplicity $\dim\sV_d$. Clearly $\zeta\neq\alpha(h)$, because of  dimension reasons. So we get the eigenvalue $\alpha(h)\inv\zeta$ on $V$, and hence $\Lambda_\qnf{D}=\sV_r^\omega$ and $\rk\Lambda=2$.
\end{enumerate}
By the assumption $n\geq2$ we get $\dim\sV_{r'}=1$. Putting this all together we get as a $h$-module
\begin{eqnarray*}
  \Lambda_\qnf{D}&\isom&\sV_r^\omega\oplus\bigoplus_i\sV_{d_i},
\end{eqnarray*}
where the eigenvalues of $h=g^k$ on
\begin{enumerate}
  \item[(a)] $\sV_r^\omega$ are primitive $r'$th roots of unity ($\dim\sV_{r'}=1$) of multiplicity $\dim\sV_r$.
\item[(b)] $\sV_{d_i}$ are primitive $d_i'$th roots of unity ($\dim\sV_{d_i'}=1$) of multiplicity $\dim\sV_{d_i}$.
\end{enumerate}
\end{proof}
\begin{corollary}\label{CorollaryQRareinducedbyhwhichlookas}
 The quasi-reflections on $V$ are induced by elements $h\in U(\Lambda)$, such that
\begin{enumerate}
  \item[(i)] $\pm h$ acts as a reflection on $\Lambda_\CC$, if $D<D_0$ or $D=-2$,
\item[(ii)] $h^4\sim I$, if $D=-1$,
\item[(iii)] $h^6\sim I$, if $D=-3$.
\end{enumerate}
\end{corollary}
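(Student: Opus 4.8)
The corollary is a direct translation of the preceding proposition into a statement about how the quasi-reflection $h$ itself acts on $\Lambda_\CC$, so the plan is to unwind the case analysis $(i)$--$(iii)$ of the proposition and read off the order, up to scalars, of $h$ acting on the ambient space. Recall that $h = g^k$ is assumed to be a quasi-reflection on $V = \Hom(\WW,\CC^{n+1}/\WW)$, so all but one of its eigenvalues on $V$ equal $1$; the point is to see what this forces on $\Lambda_\qnf{D}\isom\sV_r^\omega\oplus\bigoplus_i\sV_{d_i}$.

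\begin{proof}
  By the Proposition the $h$-module $\Lambda_\qnf{D}$ decomposes as $\sV_r^\omega\oplus\bigoplus_i\sV_{d_i}$ where, writing $r'=r/(k,r)$ and $d_i'=d_i/(k,d_i)$, the eigenvalues of $h$ on $\sV_r^\omega$ are primitive $r'$th roots of unity with $\dim\sV_{r'}=1$, and those on $\sV_{d_i}$ are primitive $d_i'$th roots of unity with $\dim\sV_{d_i'}=1$. The possibilities for the pairs $((m_0,k),(m_j,k))$ listed there translate exactly into: either $r'=1$ and each $d_i'\in\{1,l\}$, or $r'=l$ and each $d_i'=1$, where $l=2$ in case $(i)$, $l\in\{2,4\}$ in case $(ii)$, and $l\in\{2,3,6\}$ in case $(iii)$. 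In other words, on each summand $h$ acts by a single scalar which is an $l$th root of unity, and on all but (at most) one summand that scalar is the \emph{same} primitive $l$th root of unity $\zeta$, while on the remaining summand it is $1$. Hence $\zeta\inv h$ (or, in the symmetric alternative, $\zeta h$) acts as the identity on all but one summand of $\Lambda_\qnf{D}$ and by $\zeta\inv$ on that summand; so $\zeta\inv h$ acts as a reflection on $\Lambda_\CC$ when $l=2$, which is case $(i)$ with $\pm h$ a reflection. For $l\in\{2,4\}$ we get $h^l\sim I$ (scalar multiples of the identity), i.e. $h^4\sim I$, which is case $(ii)$; for $l\in\{2,3,6\}$ we get $h^6\sim I$, which is case $(iii)$. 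Finally, each such $h$ lies in $U(\Lambda)$ since it is the restriction to $\Lambda$ of the power $g^k$ of an element $g\in G<U(\Lambda)$, and multiplying by a root of unity in $\sO^*$ (which exists precisely in the cases $D=-1,-3$ where $l>2$ occurs) keeps it in $U(\Lambda)$.
\end{proof}

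**The main obstacle.** There is no real obstacle here beyond bookkeeping: the content is entirely in the Proposition, and the corollary is its cosmetic restatement. The one point deserving care is matching the divisibility conditions ``$l(m_j,k)=m_j$'' with the statement ``$h$ has $l$th-root-of-unity eigenvalues'': one must check that $d_i' = l$ together with $\dim\sV_{d_i'}=1$ forces the scalar on $\sV_{d_i}$ to be a primitive $l$th root of unity, and that the \emph{same} root $\zeta$ can be taken on every non-trivial summand. This last uniformity is exactly what the quasi-reflection hypothesis buys: if two summands carried scalars $\zeta\neq\zeta'$ both $\neq 1$, then $\alpha(h)\inv\zeta$ and $\alpha(h)\inv\zeta'$ would be two distinct eigenvalues $\neq 1$ of $h$ on $V$ (unless one of them equals $\alpha(h)$), contradicting that $h$ is a quasi-reflection — and this is precisely the kind of argument already carried out inside the proof of the Proposition, so it can be invoked rather than repeated.
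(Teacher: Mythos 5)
Your proposal is correct and takes essentially the same route as the paper, whose proof is the one-line instruction to check all possibilities for $\alpha(h)$: the corollary is just the preceding proposition unwound, using that $h$ acts on each summand by a single scalar whose order lies in $\{1,2\}$, $\{1,2,4\}$, $\{1,2,3,6\}$ in the respective cases, together with the quasi-reflection condition that exactly one eigenvalue on $\Lambda_\CC$ differs from $\alpha(h)$. The only loose phrase is your claim that the exceptional scalar ``is $1$'' (it is merely $\neq\alpha(h)$, e.g.\ $\alpha(h)=1$ with exceptional scalar $-1$), but your symmetric alternative and the divisibility of all eigenvalue orders by $2$, $4$, resp.\ $6$ still yield (i)--(iii) exactly as intended.
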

\begin{proof}
  One has to check all possibilities for $\alpha(h)$.
\end{proof}
It is enough to investigate quotients $V/\left< g\right>$ since $V/G$ has canonical singularities if $V/\left< g\right>$ has canonical singularities for all $g\in G$. This was shown by \cite[Proof of Lemma 2.14]{MR2336040}.

Assume for the quasi-reflection $h=g^k$ that $k>1$ is minimal with this property. Then the quotient $V':=V/\left< h \right>$ is smooth. Let $h$ be of order $l$, so $g$ has order $lk$. Now consider the eigenvalues $\zeta^{a_1},\dots,\zeta^{a_n}$ of $g$ on $V$, where $\zeta$ denotes a primitive $lk$th root of unity. Now we consider the action of the group $\latt{g}/\latt{h}$ on $V'$. Using analogous arguments as before we want to describe the action of $g^f\latt{h}\in\latt{g}/\latt{h}$on $V'$. The differential of $g^f\latt{h}$ on $V'$ has eigenvalues $\zeta^{fa_1},\dots,\zeta^{fa_{n-1}},\zeta^{lfa_n}$.  Now we have to modify the Reid-Tai sum as
 \begin{eqnarray}
  \Sigma'(g^f):=\left\{\dfrac{fa_n}{k}\right\}+\sum_{i=1}^{n-1}\left\{\dfrac{fa_i}{lk}\right\}.\label{DefinitionSigmaprime}
\end{eqnarray}
\begin{lemma}\label{LemmacasesinteriorSigmaandSigmaprimegeq1}
   The   variety $\Gamma\backslash\CHn$ has canonical singularities, if
\begin{itemize}
  \item[(i)] $\Sigma(g)\geq1$ for all $g\in\Gamma$ no power of which is a quasi-reflection, and
\item[(ii)] $\Sigma'(g^f)\geq1$ for $1\leq f<k$, where $h=g^k$ is a quasi-reflection.
\end{itemize}
\end{lemma}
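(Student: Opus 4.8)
The plan is to reduce the statement to the local quotient $V/G$ via the discussion surrounding the map $f_\Gamma$ in~(\ref{mapf_Gamma}) and the Reid-Tai criterion, and then to handle separately the two kinds of elements of $G$. Recall that canonical singularities of $\Gamma\backslash\CHn$ is a local condition at the images of fixed points $[\omega]$, and at such a point the completed local ring is that of $V/G$ with $G=\Gamma_{[\omega]}$ finite acting on $V=\Hom(\WW,\CC^{n+1}/\WW)$. By the cited reduction from \cite[Proof of Lemma 2.14]{MR2336040} it suffices to prove that $V/\latt{g}$ has canonical singularities for every $g\in G$, so we fix one $g$.

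First I would dispose of the case in which no power of $g$ acts as a quasi-reflection on $V$. Then $\latt{g}$ is a finite subgroup of $\GL(V)$ containing no quasi-reflections, so the Reid-Tai criterion (the Theorem in the Reid-Tai subsection) applies directly: $V/\latt{g}$ has canonical singularities iff $\Sigma(g^j)\geq1$ for all $1\leq j<\ord(g)$. But each nontrivial power $g^j$ is again an element of $G$ none of whose powers is a quasi-reflection, so hypothesis~(i) gives $\Sigma(g^j)\geq1$ for all of them, and we are done in this case. (Here one uses that ``no power of $g$ is a quasi-reflection'' is inherited by powers, which is immediate since a power of $g^j$ is a power of $g$.)

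Next I would treat the case in which some power $h=g^k$ acts as a quasi-reflection on $V$; choose $k>1$ minimal with this property, so that $h$ is a genuine quasi-reflection (a single nontrivial eigenvalue) and, by the general fact that the quotient of a smooth variety by a group generated by a quasi-reflection is again smooth, $V':=V/\latt{h}$ is smooth. The remaining group acting is $\latt{g}/\latt{h}$, which is cyclic generated by the image of $g$, and $V'/(\latt{g}/\latt{h})\isom V/\latt{g}$. Since $V'$ is smooth, canonical singularities of this quotient is again governed by a Reid-Tai type criterion, now applied to the action of $\latt{g}/\latt{h}$ on $V'$: one must check $\Sigma'(g^f)\geq1$ for every nontrivial $g^f\latt{h}\in\latt{g}/\latt{h}$, where $\Sigma'$ is the modified Reid-Tai sum defined in~(\ref{DefinitionSigmaprime}) — the point being that along the $n$-th coordinate direction the differential of $g^f\latt{h}$ has eigenvalue $\zeta^{lfa_n}$ rather than $\zeta^{fa_n}$, so the corresponding summand is $\{fa_n/k\}$ rather than $\{fa_n/(lk)\}$. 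One also has to observe that there are no quasi-reflections left to worry about on $V'$: by minimality of $k$, $h$ generates the subgroup of $\latt{g}$ acting by quasi-reflections on $V$, so the induced action on $V'$ has no quasi-reflections, which is what makes the Reid-Tai criterion applicable on $V'$. The range $1\leq f<k$ in hypothesis~(ii), together with periodicity $g^{f}\latt h=g^{f+k}\latt h$ being replaced by the correct order of $g\latt h$, then covers exactly the nontrivial elements of $\latt g/\latt h$, so hypothesis~(ii) yields the conclusion in this case as well.

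The main obstacle is the bookkeeping in the quasi-reflection case: making precise that $\latt{h}$ is exactly the quasi-reflection part of $\latt{g}$ (so the residual action on the smooth $V'$ is quasi-reflection-free and the Reid-Tai criterion genuinely applies), and verifying that the modified Reid-Tai sum $\Sigma'$ correctly records the eigenvalues of $g^f\latt h$ on $V'$ — in particular identifying which coordinate of $V$ is the one fixed up to scalar by $h$ and tracking how the quotient map by $\latt h$ rescales that eigenvalue by the factor $l=\ord(h)$. Once these identifications are in place the argument is a direct invocation of the Reid-Tai criterion in both cases, so no further computation is needed here; the arithmetic content has already been isolated into hypotheses~(i) and~(ii).
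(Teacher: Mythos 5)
Your route is the one the paper itself takes (its proof is just a citation of \cite[Lemma 2.14]{MR2336040}, and the setup preceding the lemma — reduction to cyclic subgroups, the smooth quotient $V'=V/\latt{h}$, the modified sum $\Sigma'$ — is exactly what you reproduce), and the non-quasi-reflection case is handled correctly. But there is one concretely false step in the quasi-reflection case: the claim that, by minimality of $k$, the element $h=g^k$ generates the subgroup of $\latt{g}$ consisting of quasi-reflections, and hence that the induced action of $\latt{g}/\latt{h}$ on $V'$ automatically contains no quasi-reflections. Neither assertion holds in general. Take $g$ acting diagonally with eigenvalues $(\zeta_3,i)$, so $\ord(g)=12$: the minimal $k$ with $g^k$ a quasi-reflection is $k=3$ (eigenvalues $(1,-i)$), yet $g^4$, with eigenvalues $(\zeta_3,1)$, is a quasi-reflection not lying in $\latt{g^3}$; worse, on $V'$ with coordinates $(x_1,x_2^4)$ the class of $g$ itself acts as $\operatorname{diag}(\zeta_3,1)$, i.e. as a quasi-reflection on $V'$, although $g$ is not one on $V$. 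So minimality of $k$ alone does not deliver the no-quasi-reflection hypothesis you need in order to quote the Reid--Tai criterion on $V'$.

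The gap is easily closed, and this is why the lemma is true as stated: a quasi-reflection (or an element acting trivially) always has Reid--Tai sum strictly less than $1$, and $\Sigma'(g^f)$ is by definition the Reid--Tai sum of the (diagonal, linear) action of $g^f\latt{h}$ on $V'$. Hence hypothesis (ii) itself already excludes quasi-reflections, and non-faithful behaviour, in the action of $\latt{g}/\latt{h}$ on $V'$; equivalently, the ``if'' direction of Reid--Tai needs no separate quasi-reflection check once every nontrivial element is known to have sum at least $1$. (Consistently, in the example above $\Sigma'(g)=1/3<1$, so hypothesis (ii) fails and the lemma claims nothing.) With that one justification replaced, your argument is complete and coincides with the proof of \cite[Lemma 2.14]{MR2336040} that the paper invokes; two small points you may still wish to make explicit are that $k$ divides $\ord(g)$, so that the nontrivial classes in $\latt{g}/\latt{h}$ are exactly $g^f\latt{h}$ with $1\leq f<k$, and the trivial boundary case in which $g$ is itself a quasi-reflection, where every nontrivial power of $g$ is one as well and $V/\latt{g}$ is already smooth.
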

\begin{proof}
  \cite[Lemma 2.14]{MR2336040}.
\end{proof}
\begin{proposition}\label{PropositionSigmaprimegeq1interior}
  Let $h=g^k$ be as above, $D<D_0$ and $n\geq12$. Then $  \Sigma'(g^f)\geq1$ for $1\leq f<k$.
\end{proposition}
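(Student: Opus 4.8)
The plan is to mimic the strategy of Theorem \ref{TheoremforgnoQRandngeq11}, but now working with the modified Reid-Tai sum $\Sigma'(g^f)$ defined in (\ref{DefinitionSigmaprime}), which weights the ``distinguished'' eigenvalue coming from $\VV_r^\omega$ by $\{fa_n/k\}$ instead of $\{fa_n/(lk)\}$. First I would invoke Corollary \ref{CorollaryQRareinducedbyhwhichlookas}: since $D<D_0$, the quasi-reflection $h=g^k$ is, up to sign, a reflection on $\Lambda_\CC$, so $l=\ord(h)\in\{1,2\}$ and the decomposition of $\Lambda_\qnf{D}$ as a $g$-module takes the very restricted shape described in the Proposition before Corollary \ref{CorollaryQRareinducedbyhwhichlookas}. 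This means that for every summand $\sV_{d}$ other than the one containing $\omega$ we have $\dim \sV_{d'}=1$, so the eigenvalue of $h$ on each such summand is a single primitive $d'$th root of unity, and likewise on $\sV_r^\omega$. I would then write $\Sigma'(g^f)$ as a sum over the $\Hom(\WW,\VV_d)$-pieces exactly as in the proof of Theorem \ref{TheoremforgnoQRandngeq11}, the only difference being the altered weight on the $\VV_r^\omega\cap(\CC^{n+1}/\WW)$ contribution.

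The key observation is that replacing the denominator $lk$ by $k$ in the distinguished term can only \emph{decrease} that single contribution, and then only when $l=2$; the contributions of all the other summands $\Hom(\WW,\VV_d)$ are literally unchanged. Hence the lower bounds $c_{\min}(d)$ and $c_{\min}^{\red}(d)$ computed in the proof of Theorem \ref{TheoremforgnoQRandngeq11} remain valid for those pieces, and the dimension-counting equation (\ref{Equationdimensioncountrepresentations}) still holds. So I would redo the case analysis (1)--(3) of that proof, tracking the one modified term: in case (1), $\varphi(r)=2$, the distinguished contribution drops by at most the value of a single $\{fa_n/(lk)\}$-term, costing at most an extra summand's worth in the dimension bound; in cases (2) and (3) one still fixes $D\in\{-7\}$ resp. $D\in\{-5,-6,-15\}$ because otherwise $\VV_r^\omega$ contributes $\geq 1$ even with the reduced weight (the reduction only matters when $l=2$, and one checks the primitive-root contribution is still large enough), and the remaining tables of contributions of $\VV_1,\VV_2,\VV_3,\VV_4,\VV_6$ are untouched. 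Re-running the arithmetic, each case yields $\Sigma'(g^f)\geq 1$ once $\nu_1+\nu_2$ is large enough, and the worst threshold (the $D=-15$ subcase, as in Theorem \ref{TheoremforgnoQRandngeq11}) now reads $n\geq 12$ rather than $n\geq 11$, the extra unit coming precisely from the one weakened term.

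The main obstacle I anticipate is bookkeeping rather than conceptual: one must be careful that the ``distinguished'' eigenvalue $\zeta^{a_n}$ is genuinely the one whose image in $V'$ survives as $\zeta^{lfa_n}$, i.e.\ that it comes from the $\sV_r^\omega$-direction and not from some $\sV_d$, and that $h=g^k$ acting trivially on $V'$ forces $k\mid a_i$ for the other indices so that the sum $\Sigma'(g^f)$ is well-defined and the fractional parts $\{fa_i/(lk)\}$ really do range over the same residues as in the non-quasi-reflection case. Once that identification is pinned down — using exactly the eigenvalue analysis in the Proposition preceding Corollary \ref{CorollaryQRareinducedbyhwhichlookas} — the estimate is a finite computer check over the same finite list (\ref{listofdofroughestimation}) of $d$'s and the finitely many relevant $D$, identical in spirit to Lemma \ref{LemmagnoQRphi(r)geq10}. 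I would close by noting that combining this Proposition with Theorem \ref{TheoremforgnoQRandngeq11} and Lemma \ref{LemmacasesinteriorSigmaandSigmaprimegeq1} gives that $\Gamma\backslash\CHn$ has canonical singularities for $D<D_0$ and $n\geq 12$.
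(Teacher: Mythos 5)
Your plan of re-running the case analysis of Theorem \ref{TheoremforgnoQRandngeq11} with one weakened term is workable in principle, but as written it rests on a misidentification of where the exceptional eigenvalue of the quasi-reflection lives, and settling that point is precisely the substance of the proof. On $\Hom(\WW,\VV_r^\omega\cap\CC^{n+1}/\WW)$ the quasi-reflection $h=g^k$ acts with all eigenvalues equal to $1$ (all eigenvalues of $h$ on $\sV_r^\omega$ are $\alpha(h)$), so the distinguished eigenvalue $\zeta^{a_n}$ does \emph{not} come from the $\VV_r^\omega$-direction, as you assert twice; it comes from a one-dimensional summand $\sV_d$ of the $g$-decomposition with $d\in\{1,2\}$ (for $D<D_0$), i.e.\ from one of the $\nu_1$- or $\nu_2$-pieces. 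This matters for the arithmetic: in cases (2) and (3) of Theorem \ref{TheoremforgnoQRandngeq11} the $\VV_r^\omega$-term supplies the bulk of the Reid--Tai sum ($4/7$, $4/5$, $5/6$, $11/15$), so if that were the weakened term, your remark that it is ``still large enough with the reduced weight'' would be an unproved claim rather than bookkeeping, and the threshold $n\geq12$ would not follow from ``losing one unit''. Note also that replacing the denominator $lk$ by $k$ need not decrease a fractional part; the only safe statement is that the distinguished term may be bounded below by $0$.

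For comparison, the paper does not re-run the tables at all: it shows the exceptional eigenvector spans $\Hom(\WW,\VV_d)$ with $\dim\sV_d=1$, takes the primitive generator $\delta$ of $\sV_d\cap\Lambda$ (on which the hermitian form is positive), passes to the sublattice $\Lambda'=\delta^\perp$ of signature $(n-1,1)$, checks that $g^f\latt{h}$ is not a quasi-reflection in $U(\Lambda')$ (otherwise Corollary \ref{CorollaryQRareinducedbyhwhichlookas} would force $g^f\in\latt{h}$), and then applies Theorem \ref{TheoremforgnoQRandngeq11} to $\Lambda'$, which is exactly where the hypothesis $n-1\geq11$, i.e.\ $n\geq12$, comes from. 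Your alternative route (drop the single weakened term and redo the dimension counts, using that $g^f$ is itself not a quasi-reflection by minimality of $k$) can be made to give the same bound, but only after correctly locating the weakened term in a $\VV_1$- or $\VV_2$-summand and checking that the reductions of Lemma \ref{LemmagnoQRphi(r)geq10}, Remark \ref{Remark_mcrforothervalues} and Lemma \ref{LemmagnoQRR=1,2} are carried by summands other than this one; as written, that verification is missing and the stated identification is wrong, so the proposal has a genuine gap.
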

\begin{proof}
    We know from the former results that all eigenvalues on  $\sV_r^\omega$ are $\alpha(h)$, where 
\begin{eqnarray*}
  \alpha(h)&=&
\begin{cases}
              \pm1,& D<D_0\ \text{and}\ D=-2,\\
\pm1, \zeta_4,&D=-1,\\
\pm1,\zeta_3,\zeta_6,&D=-3.
\end{cases}
\end{eqnarray*}
By a detailed analysis of the decomposition of $\Lambda_\CC$ into $\qnf{D}$-irreducible pieces there is exactly one eigenvalue on $\Lambda_\CC$ that is $\lambda\not=\alpha(h)$, since only one eigenvalue on $V$ is not $1$. This eigenvalue $\lambda$ will appear on one $\sV_d$. As all eigenvalues of $g$ on $\sV_d$ are primitive $d$th roots of unity they all have the same order. We know that $\lambda$ must have multiplicity $1$ on $\Lambda_\CC$ so $\dim\sV_d=1$. This implies
\begin{eqnarray}
  d=\begin{cases}
      1,2,\\1,2,4,\\1,2,3,6.
    \end{cases}
\end{eqnarray}
Denote by $v$ the eigenvector of $g$ corresponding to the eigenvalue $\zeta^{a_n}$. Then $v$ clearly comes from $\sV_d$ and therefore $\left< v\right>=\Hom(\WW,\VV_d)$.
 If $\delta$ is the primitive generator of $\sV_d\cap\Lambda$ then $h(\delta,\delta)>0$, since $\sV_d\subset W_\qnf{D}^\perp$, where $W_\qnf{D}\otimes_\qnf{D}\CC\isom\WW$ and $W_\qnf{D}$ is a $\qnf{D}$-vector space. The form $h(\cdot,\cdot)$ is negative definite on $\WW$ as shown in the proof of Lemma \ref{LemmaScapT=0}. If we define the sublattice $\Lambda'\subset\Lambda$ as $\Lambda':=\delta^\perp$, this lattice has signature $(n-1,1)$. Now $\left<g\right>/\left<h\right>$ acts on $\Lambda'$ as a subgroup of $U(\Lambda')$. Therefore $$\Sigma'(g^f)=\left\{\dfrac{f{a_n}}{k}\right\}+\Sigma(g^f\latt{h})$$ 
and $g^f\latt{h}\in U(\Lambda')$. Analogously to the proof of \cite[Proposition 2.15]{MR2336040} we can give the following argument: we claim that $g^f\latt{h}$ is not a quasi-reflection on $\Lambda'$. If it were, the eigenvalues of $g^f$ on $\Lambda'$ are as in Corollary \ref{CorollaryQRareinducedbyhwhichlookas}. Thus the order of the eigenvalue on $\sV_d$ is
 \begin{eqnarray*}
  d=\begin{cases}
      1,2,\\1,2,4,\\1,2,3,6.
    \end{cases}
\end{eqnarray*}
So $\ord g^f$ divides $l$, and therefore $g^f\in\latt{h}$. Hence the group $\latt{g}/\latt{h}$ has no quasi-reflections and we apply Theorem \ref{TheoremforgnoQRandngeq11} for $n-1\geq11$.
\end{proof}
\begin{theorem}\label{Theoremngeq12interiorcansings}
  Let $n\geq 12$ and $D<D_0$. Then $\Gamma\backslash\CHn$ has canonical singularities.
\end{theorem}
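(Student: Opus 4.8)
The plan is to combine the two ingredients already assembled in this section — namely the interior estimates for non-quasi-reflection elements and the refined estimate for the quotient by a quasi-reflection — via the reduction criterion of Lemma \ref{LemmacasesinteriorSigmaandSigmaprimegeq1}. Concretely, I would argue as follows. Fix a point $[\omega]\in\CHn$ with stabiliser $G=\Gamma_{[\omega]}$; by the discussion around the map \eqref{mapf_Gamma} and Lemma \ref{LemmacasesinteriorSigmaandSigmaprimegeq1}, it suffices to check condition (i) and condition (ii) of that lemma for the action of $G$ on $V=\Hom(\WW,\CC^{n+1}/\WW)$ for every relevant $g$. Condition (i) asks that $\Sigma(g)\geq1$ for every $g\in G$ no power of which acts as a quasi-reflection on $V$; but such a $g$ in particular does not itself act as a quasi-reflection, so Theorem \ref{TheoremforgnoQRandngeq11} applies directly and gives $\Sigma(g)\geq1$, using $D<D_0$ and $n\geq11$ (which is implied by $n\geq12$). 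Condition (ii) asks that $\Sigma'(g^f)\geq1$ for $1\leq f<k$ whenever $h=g^k$ is a quasi-reflection with $k$ minimal; this is precisely the content of Proposition \ref{PropositionSigmaprimegeq1interior}, which requires $D<D_0$ and $n\geq12$. Since both hypotheses are satisfied, Lemma \ref{LemmacasesinteriorSigmaandSigmaprimegeq1} yields that $\Gamma\backslash\CHn$ has canonical singularities.

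The one point that needs a word of care is matching the quantifiers: Lemma \ref{LemmacasesinteriorSigmaandSigmaprimegeq1}(i) is stated for $g\in\Gamma$ \emph{no power} of which is a quasi-reflection, whereas Theorem \ref{TheoremforgnoQRandngeq11} is stated for a $g$ that does not itself act as a quasi-reflection. I would note explicitly that the former class is contained in the latter (if no power of $g$ is a quasi-reflection, then in particular $g=g^1$ is not), so applying Theorem \ref{TheoremforgnoQRandngeq11} to each such $g$ is legitimate. Similarly, for (ii) one should observe that all the quasi-reflection elements that arise are of the form $g^k$ described in Corollary \ref{CorollaryQRareinducedbyhwhichlookas} and Proposition \ref{PropositionSigmaprimegeq1interior}, so the case analysis there is exhaustive; and the passage from ``canonical singularities of $V/G$'' to ``canonical singularities of $\Gamma\backslash\CHn$'' is the standard local statement, since $\Gamma\backslash\CHn$ is locally isomorphic to $G\backslash V$ near $f_\Gamma([\omega])$ by the results recalled in Section \ref{SectionDefofObjects}.

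I do not expect a genuine obstacle here: the theorem is essentially a bookkeeping corollary that packages Theorem \ref{TheoremforgnoQRandngeq11} and Proposition \ref{PropositionSigmaprimegeq1interior} through Lemma \ref{LemmacasesinteriorSigmaandSigmaprimegeq1}. The only mildly delicate step — and the one I would flag as the ``hard part'', in the sense that it is where the real work lives — is that it silently depends on the reduction ``$V/G$ has canonical singularities if $V/\latt{g}$ does for every $g\in G$'' (the argument attributed to \cite[Proof of Lemma 2.14]{MR2336040}), which is what allows us to pass from cyclic subgroups, where the Reid–Tai sums $\Sigma$ and $\Sigma'$ are defined, to the full finite group $G$. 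Everything else is a direct invocation of the numbered results above with the hypotheses $n\geq12$ and $D<D_0$ plugged in.

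\begin{proof}
  Fix $[\omega]\in\CHn$ and let $G=\Gamma_{[\omega]}$ act on $V=\Hom(\WW,\CC^{n+1}/\WW)$. By Lemma \ref{LemmacasesinteriorSigmaandSigmaprimegeq1} it suffices to verify its conditions (i) and (ii). For (i), if $g\in G$ is such that no power of $g$ acts as a quasi-reflection on $V$, then in particular $g$ itself does not act as a quasi-reflection, so Theorem \ref{TheoremforgnoQRandngeq11} gives $\Sigma(g)\geq1$, using $D<D_0$ and $n\geq12\geq11$. For (ii), let $h=g^k$ be a quasi-reflection on $V$ with $k$ minimal; then $h$ is of the form described in Corollary \ref{CorollaryQRareinducedbyhwhichlookas}, and Proposition \ref{PropositionSigmaprimegeq1interior}, applicable since $D<D_0$ and $n\geq12$, yields $\Sigma'(g^f)\geq1$ for all $1\leq f<k$. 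Hence Lemma \ref{LemmacasesinteriorSigmaandSigmaprimegeq1} applies and $\Gamma\backslash\CHn$ has canonical singularities.
\end{proof}
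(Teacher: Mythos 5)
Your proposal is correct and matches the paper's proof, which likewise deduces the theorem directly from Lemma \ref{LemmacasesinteriorSigmaandSigmaprimegeq1}, Theorem \ref{TheoremforgnoQRandngeq11} and Proposition \ref{PropositionSigmaprimegeq1interior}. The extra remarks on matching quantifiers and on the reduction to cyclic subgroups are sound but not needed beyond what those cited results already provide.
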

\begin{proof}
  This follows directly from Lemma \ref{LemmacasesinteriorSigmaandSigmaprimegeq1}, Theorem \ref{TheoremforgnoQRandngeq11} and Proposition \ref{PropositionSigmaprimegeq1interior}.
\end{proof}

\section{The boundary}\label{SectionBoundary}
Now we want to state a result on the singularities of the toroidal compactification $(\Gamma\backslash\CHn)^*$ of the quasi-projective variety $\Gamma\backslash\CHn$. For a complex matrix $A$ we write $\hermitian{A}$ instead of $^T{\overline{A}}$.

Therefore we consider isotropic subspaces $E_\qnf{D}$ with respect to the form $h(\cdot,\cdot)$. As the form is of signature $(n,1)$ they are $1$-dimesional. To each isotropic subspace there corresponds a $0$-dimensional boundary component or cusp $F$. First we choose a basis such that $h(\cdot,\cdot)$ is given by the matrix $Q'$, i.e. it can be written in the form
$$h(x,y)=\hermitian{y}Q'x.$$
The next goal is to find a basis such that the form behaves well in later calculations.
\begin{lemma}
  There exists a basis $b_1,\dots,b_{n+1}$ of $\Lambda_\qnf{D}$, such that 
\begin{enumerate}
  \item[(i)] $b_1$ is a basis of $E_\qnf{D}$ and $b_1,\dots,b_n$ is a basis of $E_\qnf{D}^\perp$,
\item[(ii)] the hermitian form is written with respect to this basis as 
\begin{eqnarray}
  Q:=(h(b_i,b_j))_{1\leq i,j\leq n+1}=\left(
\begin{array}{c|c|c}
  0 &0&a\\
\hline
0&B&0\\
\hline
\bar{a}& 0 & 0
\end{array}
\right),
\end{eqnarray}
where $a\in\qnf{D}$ and $B=\hermitian{B}$.
\end{enumerate}
\end{lemma}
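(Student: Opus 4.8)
The plan is to build the basis in two stages, first extracting the isotropic line and a hyperbolic partner, then splitting off the orthogonal middle piece. Since $E_\qnf{D}$ is a $1$-dimensional isotropic subspace, pick any $\sO$- (or $\qnf{D}$-) generator $b_1$ of $E_\qnf{D}$; then $h(b_1,b_1)=0$. Because $h(\cdot,\cdot)$ is nondegenerate, the linear functional $x\mapsto h(x,b_1)$ is not identically zero, so there exists some vector $c$ with $h(c,b_1)=:a\neq 0$, and necessarily $c\notin E_\qnf{D}^\perp$. Thus $\Lambda_\qnf{D}=E_\qnf{D}^\perp\oplus\qnf{D}c$ and, after rescaling $c$ if we wish, we obtain a vector that will become $b_{n+1}$. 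I would then note that $b_1\in E_\qnf{D}\subset E_\qnf{D}^\perp$ (since $E_\qnf{D}$ is isotropic) and that $E_\qnf{D}^\perp$ has dimension $n$.

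Next I would handle the restriction of $h$ to $E_\qnf{D}^\perp$. On this space $b_1$ spans its radical: indeed $h(b_1,b_1)=0$ and $h(b_1,y)=0$ for all $y\in E_\qnf{D}^\perp$ by definition of the perp, while nothing outside $\qnf{D}b_1$ is in the radical because $h$ is nondegenerate on $\Lambda_\qnf{D}$ and $c$ pairs nontrivially with $b_1$. Hence $h$ descends to a nondegenerate hermitian form on the $(n-1)$-dimensional quotient $E_\qnf{D}^\perp/\qnf{D}b_1$. Choosing any basis of that quotient and lifting it to vectors $b_2,\dots,b_n\in E_\qnf{D}^\perp$, I get a basis $b_1,\dots,b_n$ of $E_\qnf{D}^\perp$ satisfying (i), with Gram matrix of the shape $\left(\begin{smallmatrix}0&0\\0&B'\end{smallmatrix}\right)$ where $B'=\hermitian{B'}$ is the (possibly not yet block-diagonal with respect to $b_{n+1}$) middle block; here I should be a little careful that $B'$ is exactly $(h(b_i,b_j))_{2\le i,j\le n}$ and is hermitian because $h$ is.

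It remains to adjust $b_{n+1}$ (our earlier $c$) and the $b_2,\dots,b_n$ so that the cross terms $h(b_{n+1},b_j)$ for $2\le j\le n$ and $h(b_{n+1},b_{n+1})$ vanish, leaving only the corner entries $a=h(b_{n+1},b_1)$ (up to conjugation) and $\bar a=h(b_1,b_{n+1})$. This is the standard ``completing the hyperbolic plane'' move: replace $b_{n+1}$ by $b_{n+1}-\sum_{j=2}^n \lambda_j b_j - \mu b_1$ and solve for $\lambda_j$ using nondegeneracy of $B'$ to kill the middle cross terms, then solve for $\mu\in\qnf{D}$ using $a\neq 0$ to kill $h(b_{n+1},b_{n+1})$; note $h(b_{n+1},b_1)$ is unaffected by these subtractions since $b_1,\dots,b_n\in E_\qnf{D}^\perp$. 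After this the Gram matrix is exactly $Q$ with $B:=B'$, and hermiticity $B=\hermitian{B}$ and $Q=\hermitian{Q}$ are automatic. I do not expect a serious obstacle here; the only point requiring mild care is that all adjustments can be carried out with coefficients in $\qnf{D}$ (which holds because $E_\qnf{D}$, $h$, and all the vectors live over $\qnf{D}$ and $B'$ is invertible over $\qnf{D}$), and that the chosen generator $b_1$ may only be primitive over $\sO$ rather than giving a unimodular lattice — but the statement is only over $\qnf{D}_\qnf{D}=\qnf{D}$, so integrality is not claimed and $a$ is merely required to lie in $\qnf{D}$.
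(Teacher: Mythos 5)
Your proposal is correct and takes essentially the same route as the paper: the paper also starts from a basis adapted to $E_\qnf{D}\subset E_\qnf{D}^\perp$ (giving the Gram matrix with blocks $a,B,c,d$) and then applies the change of basis $N$ with $r=-B\inv c$ and $r'$ chosen so that $d-\hermitian{c}B\inv c+\overline{r'}a+\overline{a}r'=0$, which is exactly your replacement $b_{n+1}\mapsto b_{n+1}-\sum_j\lambda_j b_j-\mu b_1$ using invertibility of $B$ for the cross terms and hermitian symmetry plus $a\neq0$ for the corner entry. The only difference is presentational (explicit matrix computation $\hermitian{N}Q'N$ versus your vector-by-vector adjustment), so no further comment is needed.
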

\begin{proof}
First we can show that $Q'$ has to be of the form $ Q'=\left(
\begin{array}{c|c|c}
  0 &0&a\\
\hline
0&B&c\\
\hline
\bar{a}& \hermitian{c} & d
\end{array}
\right)$.
The upper zeroes in the matrix $Q'$ and $Q'=\hermitian{Q'}$ directly follow from the fact that $h(E_\qnf{D},e)=0$ for all $e\in E_\qnf{D}^\perp$ and $h(x,y)=\overline{h(y,x)}$. The rest is similar to \cite[Proof of Lemma 2.24]{MR2336040}. The matrix $B$ represents the hermitian form $h$ on $E_\qnf{D}^\perp/E_\qnf{D}$ and is therefore invertible. Thus one can define 
\begin{eqnarray*}
  N:=\left(
\begin{array}{c|c|c}
  1 &0&r'\\
\hline
0&I_{n-1}&r\\
\hline
0& 0 & 1
\end{array}
\right),
\end{eqnarray*}
where $r:=-B\inv c\in\qnf{D}^{n-1}$. Choose $r'$ such that it satisfies the equation
\begin{eqnarray*}
  d-\hermitian{c}B\inv c+\overline{r'}a+\overline{a}r'=0.
\end{eqnarray*}
This is possible as the first two summands are real by definition and the other two are the complex conjugate of each other and therefore their sum is real.
Now
\begin{eqnarray}
  \hermitian{N}Q'N&=&\left(
\begin{array}{c|c|c}
  0 &0&a\\
\hline
0&B&Br+c\\
\hline
\overline{a}& \hermitian{r}B+\hermitian{c} & \delta
\end{array}
\right),
\end{eqnarray}
with $\delta:=\overline{a}r'+(\hermitian{r}B+\hermitian{c})r+\overline{r'}a+\hermitian{r}c+d$. But 
$$Br+c=B(-B\inv c)+c=0.$$
Because of the definition of $r$ and $r'$ we achieve
\begin{eqnarray}
  \delta&=&\overline{a}r'+\hermitian{(-B\inv c)}B(-B\inv c)+\hermitian{c}(-B\inv c)+\overline{r'}a+\hermitian{(-B\inv c)}c+d\notag\\
&=&\underbrace{\overline{a}r'+\overline{r'}a-\hermitian{c}\hermitian{(B\inv)}c+d}_{=0} +\underbrace{\hermitian{c}\hermitian{(B\inv)}B B\inv c-\hermitian{c}B\inv c}_{=0}\notag\\
&=&0.\notag
\end{eqnarray}
Note that $\hermitian{(B\inv)}=B\inv$.
Altogether this gives the result.
\end{proof}

To continue in the compactification we follow \cite{MR0457437}. Therefore we first calculate the stabiliser subgroup.
\begin{lemma}\label{LemmacalculationN(F)}
  Let $N(F)\subset\Gamma_\RR$ be the stabiliser subgroup corresponding to the cusp $F$. Then 
\begin{eqnarray}
  N(F)&=&\left\{g=\left(
\begin{array}{c|c|c}
  u&v&w\\
\hline
0&X&y\\
\hline
0&0 & z
\end{array}
\right);\begin{array}{c} z\overline{u}=1,\ \hermitian{X}BX=B,\\ \hermitian{X}By+\hermitian{v}az=0,\\ \hermitian{y}By+\overline{z}\overline{a}w+za\overline{w}=0\end{array}\right\}.
\end{eqnarray}
\end{lemma}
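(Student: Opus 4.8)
The plan is to read off $N(F)$ from the two conditions that define it: that $g\in\Gamma_\RR$ stabilises the cusp $F$, and that $g$ preserves the hermitian form $h$. Since $F$ corresponds to the isotropic line $E_\qnf{D}=\qnf{D}\,b_1$, stabilising $F$ means $g(E_\qnf{D})=E_\qnf{D}$; as $g$ is an isometry it then also preserves $E_\qnf{D}^\perp$, which by the previous lemma is $\langle b_1,\dots,b_n\rangle$. Hence in the basis $b_1,\dots,b_{n+1}$ the matrix of $g$ is block upper triangular,
$$g=\left(\begin{array}{c|c|c} u&v&w\\ \hline 0&X&y\\ \hline 0&0&z\end{array}\right),$$
with $u,w,z$ scalars, $v$ a $1\times(n-1)$ row, $y$ an $(n-1)\times1$ column and $X$ an $(n-1)\times(n-1)$ block (and $u\neq0$ because $g$ is invertible); conversely any matrix of this shape stabilises the flag $E_\qnf{D}\subset E_\qnf{D}^\perp$ and hence $F$.

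Next I would impose $\hermitian{g}Qg=Q$. Both sides are hermitian, so it suffices to compare the blocks on and above the block diagonal. Multiplying out $Qg$ and then $\hermitian{g}(Qg)$ and using $a\neq0$ together with $B=\hermitian{B}$ from the previous lemma, the $(1,1)$ and $(1,2)$ blocks vanish automatically, while the $(1,3)$, $(2,2)$, $(2,3)$ and $(3,3)$ blocks give, respectively,
$$\bar u a z=a\ \Longleftrightarrow\ z\bar u=1,\qquad \hermitian{X}BX=B,$$
$$\hermitian{X}By+\hermitian{v}az=0,\qquad \hermitian{y}By+\bar z\bar a w+za\bar w=0.$$
The $(3,1)$ and $(3,2)$ blocks merely reproduce the hermitian conjugates of the $(1,3)$ and $(2,3)$ relations and so impose nothing new. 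This is exactly the system in the statement, and since every step is reversible, any block upper triangular $g$ satisfying it is an isometry fixing $E_\qnf{D}$, i.e.\ an element of $N(F)$; one finally notes that this agrees with the group attached to the cusp $F$ in the toroidal formalism of \cite{MR0457437}.

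The matrix manipulation is entirely routine. The two points that need genuine care are the automatic invariance of $E_\qnf{D}^\perp$ under any $g$ fixing $E_\qnf{D}$ --- this is what forces the lower blocks to vanish and pins down the asserted shape of $g$ --- and the observation that the ``lower'' block equations are redundant, so that the block identities reduce precisely to the four conditions listed. Beyond this bookkeeping I do not expect any obstacle.
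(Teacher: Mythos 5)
Your proposal is correct and follows essentially the same route as the paper, which simply observes that $N(F)$ consists of the elements of $\Gamma_\RR$ stabilising the cusp (hence the isotropic line $E_{\qnf{D}}$, forcing the block upper triangular shape) subject to $\hermitian{g}Qg=Q$; your block-by-block computation, including the remark that the lower blocks are redundant by hermitian symmetry, just makes explicit what the paper asserts ``follows directly.'' If anything, your formulation via preservation of the line (rather than the paper's literal $gb_1=b_1$, which would force $u=1$) matches the stated form of the lemma more precisely.
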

\begin{proof}
  This follows directly when we study the $g\in\Gamma_\RR$ that satisfy $gb_1=b_1$, and drop all $g$ that do not respect the form defined by $Q$.
\end{proof}
\begin{lemma}\label{LemmaW(F)}
  The unipotent radical is
  \begin{eqnarray}
  W(F)&=&\left\{g=\left(
\begin{array}{c|c|c}
  1&v&w\\
\hline
0&I_{n-1}&y\\
\hline
0&0 & 1
\end{array}
\right);\begin{array}{c} By+\hermitian{v}a=0,\\ \hermitian{y}By+\overline{a}w+a\overline{w}=0\end{array}\right\}
\end{eqnarray}
\end{lemma}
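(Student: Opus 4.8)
The plan is to identify $W(F)$ as the kernel of the projection of $N(F)$ onto its Levi quotient, reading off the defining relations from Lemma~\ref{LemmacalculationN(F)}; equivalently, $W(F)$ is the subgroup of $N(F)$ acting trivially on each graded piece of the isotropic flag $0\subset E_\qnf{D}\subset E_\qnf{D}^\perp\subset\Lambda_\qnf{D}$. First I would observe that, for $g\in N(F)$ in the block form of Lemma~\ref{LemmacalculationN(F)}, the assignment $g\mapsto(u,X,z)$ is a surjective group homomorphism onto $L:=\{(u,X,z):z\overline{u}=1,\ \hermitian{X}BX=B\}$, whose kernel consists of the $g$ with $u=z=1$ and $X=I_{n-1}$. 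Since $B$ is the Gram matrix of the positive definite form induced by $h$ on $E_\qnf{D}^\perp/E_\qnf{D}$, the group $L$ is isomorphic to $\CC^{*}\times\{X:\hermitian{X}BX=B\}$, which is reductive; hence its kernel is precisely the unipotent radical $W(F)$.

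Next I would substitute $u=z=1$ and $X=I_{n-1}$ into the three relations of Lemma~\ref{LemmacalculationN(F)}: the conditions $z\overline{u}=1$ and $\hermitian{X}BX=B$ become trivial, $\hermitian{X}By+\hermitian{v}az=0$ collapses to $By+\hermitian{v}a=0$, and $\hermitian{y}By+\overline{z}\,\overline{a}w+za\overline{w}=0$ collapses to $\hermitian{y}By+\overline{a}w+a\overline{w}=0$. This gives exactly the matrices in the statement, so $W(F)$ equals the displayed set. To see that this subgroup really is the whole unipotent radical, I would note that each of its elements is upper triangular with $1$'s on the diagonal, hence unipotent, and that the subgroup is connected: $v$ ranges over a (complex, hence connected) affine space, $y$ is then determined by $By=-\hermitian{v}a$ as $B$ is invertible, and $\overline{a}w+a\overline{w}=-\hermitian{y}By$ is a single real-linear constraint cutting out a real affine line of admissible $w$. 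Thus $W(F)$ is a connected unipotent (Heisenberg-type) group, which together with the reductivity of $L$ confirms the claim.

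The argument involves no genuine difficulty; the one point that needs care is the identification of $L$ as a reductive group, since this is what legitimises calling its kernel the unipotent radical — everything else is the bookkeeping of plugging into Lemma~\ref{LemmacalculationN(F)}. Alternatively, the description of $W(F)$ can be quoted from the general structure theory of stabilisers of rational boundary components in \cite{MR0457437}, or obtained exactly as in the orthogonal analogue treated in \cite{MR2336040}.
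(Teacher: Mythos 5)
Your argument is correct, but it is packaged differently from the paper's. The paper takes $W(F)$ to be, by definition, the set of unipotent elements of $N(F)$: such an element must have $u=z=1$ and $X$ unipotent, and the whole proof then reduces to the single observation that a unipotent $X$ with $\hermitian{X}BX=B$ and $B$ definite must equal $I_{n-1}$ (argued by induction on $n$); the two displayed relations then fall out of Lemma \ref{LemmacalculationN(F)} exactly as in your second paragraph. You instead realise $W(F)$ as the kernel of the projection $g\mapsto(u,X,z)$ onto the Levi quotient $L\isom\CC^*\times\{X:\hermitian{X}BX=B\}$, and conclude via the reductivity of $L$ (compactness of the unitary group of the definite form $B$) together with the connectedness and unipotence of the kernel. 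Both proofs hinge on the same fact -- definiteness of $B$ kills all unipotence in the Levi direction -- but your route has the advantage of literally establishing that the displayed group is the unipotent radical in the algebraic-group sense, which is what the lemma asserts, whereas the paper's is a more elementary matrix computation that silently identifies ``unipotent radical'' with ``set of unipotent elements of $N(F)$''. Two small points of care in your version: the bare implication ``kernel of a surjection onto a reductive group is the unipotent radical'' is false in general (project a product of reductive groups onto one factor), so the supplementary check you make -- kernel normal, connected, consisting of unipotent elements, hence contained in $R_u(N(F))$, while reductivity of $L$ gives the reverse inclusion -- is genuinely needed and should be stated as the two-inclusion argument; and your connectedness claim (the admissible $w$ form a real affine line) uses $a\neq0$, which holds because $Q$ is nondegenerate but deserves a word.
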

\begin{proof}
  The group $W(F)$ is by definition the subgroup of $N(F)$ consisting of all unipotent elements. Therefore an element $g\in W(F)$ has to be of the form
$$g=\left(
\begin{array}{c|c|c}
  1&v&w\\
\hline
0&X&y\\
\hline
0&0 & 1
\end{array}
\right),$$
where $X=I_{n-1}+T$ with $T$ strict upper triangular. So it remains to show that $T=0$. As $B$ is definite and $X$ is unipotent the statement follows by induction on $n$.
\end{proof}
\begin{lemma}\label{LemmaU(F)}
   The centre of $W(F)$ is then given by the group
\begin{eqnarray}
  U(F)&=&\left\{g=\left(
\begin{array}{c|c|c}
  1&0&iax\\
\hline
0&I_{n-1}&0\\
\hline
0&0 & 1
\end{array}
\right);\ x\in \RR\right\}
\end{eqnarray}
\end{lemma}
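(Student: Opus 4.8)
The plan is to read off the centre directly from the matrix description of $W(F)$ in Lemma~\ref{LemmaW(F)}. Write an element of $W(F)$ as the triple $(v,w,y)$, where $v\in\CC^{n-1}$ is a row vector, $y\in\CC^{n-1}$ a column vector and $w\in\CC$, subject to $By+\hermitian{v}a=0$ and $\hermitian{y}By+\overline{a}w+a\overline{w}=0$. A direct block multiplication gives
\[
  (v,w,y)\cdot(v',w',y')=(v+v',\,w+w'+vy',\,y+y'),
\]
so two elements commute if and only if $vy'=v'y$. Hence $(v,w,y)$ lies in the centre of $W(F)$ precisely when $vy'=v'y$ for every $(v',w',y')\in W(F)$.

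Next I would use the first defining relation. Since $B=\hermitian{B}$ is (positive) definite, it is invertible, so the relation forces $y=-aB\inv\hermitian{v}$, and similarly $y'=-aB\inv\hermitian{v'}$; conversely, for any prescribed $v'\in\CC^{n-1}$ there is an element of $W(F)$ with that first coordinate, because once $y'$ is fixed the remaining relation $\overline{a}w'+a\overline{w'}=-\hermitian{y'}By'$ is solvable for $w'$ (the left-hand side takes every real value as $w'$ varies, $a$ being nonzero, and the right-hand side is real). Substituting and cancelling the scalar $a$, the centrality condition reads $vB\inv\hermitian{v'}=v'B\inv\hermitian{v}$ for all $v'\in\CC^{n-1}$. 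Using $\hermitian{(B\inv)}=B\inv$ one sees that $v'B\inv\hermitian{v}=\overline{vB\inv\hermitian{v'}}$, so the condition says $vB\inv\hermitian{v'}\in\RR$ for all $v'$; taking $v'$ to run through $e_j$ and $ie_j$ and using that $B\inv$ is invertible forces $v=0$, and then $y=-aB\inv\hermitian{v}=0$.

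Finally, with $v=y=0$ the second defining relation degenerates to $\overline{a}w+a\overline{w}=0$, i.e. $\overline{a}w\in i\RR$, which is exactly $w=iax$ for some $x\in\RR$; thus the centre is contained in the set $\{(0,iax,0):x\in\RR\}=U(F)$. Conversely every such triple manifestly satisfies both defining relations of $W(F)$ and, by the multiplication formula above, commutes with all of $W(F)$, so it is central. This proves the equality of the centre with $U(F)$. I do not expect a real obstacle here; the only points requiring care are the conjugate-transpose bookkeeping (in particular the identity $v'B\inv\hermitian{v}=\overline{vB\inv\hermitian{v'}}$) and the remark that every $v'\in\CC^{n-1}$ is realised inside $W(F)$, after which the computation is the familiar ``only the central variable survives'' argument for a Heisenberg-type group.
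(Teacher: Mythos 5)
Your proof is correct and follows essentially the same route as the paper's: compute the commutation condition $vy'=v'y$, use the relation $By+\hermitian{v}a=0$ to convert it into a reality condition for a nondegenerate hermitian pairing (you eliminate $y$ via $y=-aB\inv\hermitian{v}$, the paper eliminates $v$ — the same computation), conclude $v=y=0$, and then solve $\overline{a}w+a\overline{w}=0$. Your explicit remark that every $v'$ is realised by some element of $W(F)$, and your direct derivation of $w\in ia\RR$ from $\overline{a}w\in i\RR$, are minor tidy-ups of steps the paper treats implicitly or by a case split on the real and imaginary parts of $a$.
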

\begin{proof}
  The first condition of $W(F)$ gives
$
  v=\hermitian{\left(-\frac{1}{a}By\right)}.
$
Now we will use that $U(F)$ is the centre of $W(F)$, i.e. $$\Centre(W(F))=\left\{g\in W(F);\ gg'=g'g\ \text{for all}\ g'\in W(F)\right\}.$$
These products for $g, g'\in W(F)$ lead to
$$\begin{array}{crcl}
  &vy'&=&v'y\\
\Longra&\hermitian{\left(-\frac{1}{a}By\right)y'}&=&\hermitian{\left(-\frac{1}{a}By'\right)y}\\
\Longlra&\hermitian{y}By'-\hermitian{\left(\hermitian{y}B y'\right)}&=&0.
\end{array}$$
Clearly the last equivalence implies that $\hermitian{y}By'\in\RR$ for every $y'$.
The matrix $B$ has full rank as it is invertible and thus $B\cdot\CC^{n-1}=\CC^{n-1}.$
 Therefore set $z':=By'\in\CC^{n-1}$. Now we rephrase the property from above as 
\begin{eqnarray}
\hermitian{y}z'\ \text{is real for all}\ z'\in\CC^{n-1}.\label{conditionyzprimeisreal}
\end{eqnarray}
As this is true for all vectors we can choose $z'$ to be $$z'=\transpose{(0,\dots,0,1,0,\dots,0)},$$ 
where the only coordinate not equal to $0$ is the $j$th. For this choice in (\ref{conditionyzprimeisreal}) only the $j$th coordinate of $y$ remains and therefore $\overline{y}_j\in\RR$.
Now let $$z'=\transpose{(0,\dots,0,\sqrt{D},0,\dots,0)}.$$ 
Then  (\ref{conditionyzprimeisreal}) becomes $\overline{y}_j\cdot\sqrt{D}\in\RR$, and as $D<0$ this means $y_j\in i\RR$. Hence 
$$y_j\in \RR\cap i\RR=\{0\},\ \text{because}\ z'\ \text{varies in}\ \CC^{n-1}.$$
As $j$ is chosen arbitrary we can deduce that this is true for every entry, i.e. $y=0$ and therefore also $v=0$.
So we have to study the remaining condition $\overline{a}w+\overline{w}a=0$. 
We want to describe $w$ more specifically, i.e. in terms of $a$. For this we write $w=c+id$ and $a=e+if$. So we get
\begin{eqnarray*}
  \overline{a}w+\overline{w}a=2(ec+df)=0.
\end{eqnarray*}
Assuming $e\neq0$  this implies $c=-d\frac{f}{e}$ and for this reason $w=-d\frac{f}{e}+id$, $d\in\RR$. Therefore
$w\in\RR\left(-\frac{f}{e}+i\right)=i\RR(e+if)=ia\RR$.
The case $f\neq0$ is similar.
\end{proof}
\begin{lemma}\label{LemmaU(F)ZZisomZZ}
  $U(F)_\ZZ=U(F)\cap\Gamma\isom\ZZ$.
\end{lemma}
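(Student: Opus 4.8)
The plan is to identify the abstract group $U(F)$ with the additive group of the reals, and then to observe that intersecting with the arithmetic group $\Gamma$ produces a discrete, non-trivial subgroup. So the statement will follow from the classification of discrete subgroups of $\RR$ together with one existence argument.

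First I would read off the group structure of $U(F)$ from Lemma~\ref{LemmaU(F)}. Sending $g\in U(F)$ to its upper-right entry $iax$ is a bijection of $U(F)$ onto the real line $ia\RR\subset\CC$; since the product of two such upper-triangular matrices has upper-right entry $iax+iax'$, this bijection is a group isomorphism $U(F)\xrightarrow{\ \sim\ }(ia\RR,+)\isom(\RR,+)$, and it is clearly a homeomorphism. In particular $U(F)$ is a closed one-parameter subgroup of $\Gamma_\RR\isom U(n,1)$. Now $U(\Lambda)=\Aut(\Lambda)$ preserves a $\ZZ$-structure on $\Lambda$, hence is discrete in $U(n,1)$; as $[U(\Lambda):\Gamma]<\infty$, the group $\Gamma$ is discrete as well. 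Intersecting with the closed subgroup $U(F)$, we get that $U(F)_\ZZ=U(F)\cap\Gamma$ is a discrete subgroup of $U(F)\isom\RR$, hence is either trivial or infinite cyclic. Thus everything comes down to exhibiting one non-trivial element of $U(F)\cap\Gamma$.

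For this I would make the integrality condition explicit. An element $g_c\in U(F)$ with upper-right entry $c\in ia\RR$ fixes $b_1,\dots,b_n$ and sends $b_{n+1}$ to $b_{n+1}+c\,b_1$, so $g_c(v)-v=v_{n+1}\,c\,b_1$ for $v=\sum_i v_ib_i$; moreover $g_c$ automatically preserves $h$ (it lies in $W(F)\subset N(F)$, cf.\ Lemma~\ref{LemmacalculationN(F)}), so $g_c\in U(\Lambda)$ as soon as $c\in\qnf{D}$ and $g_c(\Lambda)\subseteq\Lambda$. The line $ia\RR$ does contain non-zero elements of $\qnf{D}$: since $D<0$ one has $i\sqrt{|D|}=\sqrt{D}$, hence $a\sqrt{D}=ia\sqrt{|D|}\in ia\RR\cap\qnf{D}$, and $a\neq0$ because $Q$ is non-degenerate. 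Comparing the two full $\sO$-lattices $\Lambda$ and $\bigoplus_i\sO b_i$ (they are commensurable) one finds a positive integer $N$ with $Na\sqrt{D}\in\sO$ and $g_{Na\sqrt{D}}(\Lambda)\subseteq\Lambda$, producing a non-zero element $g_0\in U(\Lambda)\cap U(F)$. The cyclic group $\langle g_0\rangle$ is infinite (it maps onto $\ZZ\cdot Na\sqrt{D}\subset ia\RR$), so among the cosets $g_0\Gamma,\dots,g_0^{\,[U(\Lambda):\Gamma]+1}\Gamma$ two coincide and some power $g_0^{\,m}\neq I$ lies in $\Gamma$. Hence $U(F)\cap\Gamma$ is a non-trivial discrete subgroup of $\RR$, i.e.\ $\isom\ZZ$.

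The main obstacle is the third paragraph: one must check with a little care that the explicit unipotent element genuinely maps $\Lambda$ (not merely $\Lambda_\qnf{D}$) into itself, which forces the small computation with the commensurability constant $N$ and with the fractional ideal of last coordinates of $\Lambda$; everything else is formal. A cleaner but less self-contained alternative is to invoke reduction theory for the arithmetic group $\Gamma$ in the rational parabolic attached to the cusp $F$ (as underlies the construction in \cite{MR0457437}), which yields directly that $U(F)_\ZZ$ is a lattice in $U(F)\isom\RR$, hence $\isom\ZZ$.
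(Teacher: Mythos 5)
Your proof is correct, but it takes a genuinely different route from the paper. The paper argues by direct computation: for $g\in U(F)\cap\Gamma$ the entry $iax$ must lie in $\sO$, and writing $a=e+f\sqrt{D}$ with $e=p/q$, $f=r/s\in\QQ$ (and splitting into the cases $D\equiv 2,3$ and $D\equiv 1\bmod 4$) this condition is translated into $\tilde{x}\in\frac{s}{rD'}\ZZ\cap\frac{q}{p}\ZZ$, after which the bulk of the proof establishes the identity $\frac{s}{rD'}\ZZ\cap\frac{q}{p}\ZZ=\frac{\lcm(sp,rD'q)}{rD'p}\ZZ$, exhibiting the admissible $x$ as an explicit infinite cyclic group. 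You instead argue softly: $U(F)\isom(\RR,+)$ via the corner entry, $\Gamma$ is discrete in $U(n,1)$ because it preserves the $\ZZ$-structure of $\Lambda$, so $U(F)\cap\Gamma$ is a discrete subgroup of $\RR$, hence trivial or infinite cyclic, and you then manufacture a non-trivial element ($Na\sqrt{D}=iNa\sqrt{|D|}$ with a commensurability constant $N$, followed by a finite-index/coset argument to pass from $U(\Lambda)$ into $\Gamma$). Your argument is shorter, avoids the case distinction on $D\bmod 4$, and has the merit of making the non-triviality of $U(F)_\ZZ$ explicit, a point the paper's computation leaves implicit (it only characterises the constraint $iax\in\sO$, and likewise elides that such an integral unipotent element of finite index situation actually lands in $\Gamma$); the minor care needed on your side is that membership in $U(\Lambda)$ requires $g_c(\Lambda)=\Lambda$, which follows since your integrality condition is symmetric under $c\mapsto-c$. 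What the paper's heavier computation buys is the explicit generator: the quantity $\lcm(sp,rD'q)/(rD'p)$ reappears immediately afterwards in the definition of $\exp_a$ and the torus coordinate $\theta$ (the constant $\sigma$), so the explicit description of $U(F)_\ZZ$ is reused in the toroidal construction, whereas your abstract generator would have to be named and carried along instead.
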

\begin{proof}
  As $\Gamma\subset\GL(n+1,\sO)$ it is clear that $iax\in\sO$. Also note that $x\in\RR$. First consider the case $D\equiv 2,3\mod4$. Therefore 
\begin{eqnarray}
  iax=c+d\sqrt{D}\ \text{for some}\ c,d\in\ZZ.\label{Darstellung_iaxinU(F)ZZforD23}
\end{eqnarray}
Additionally we know that $a\in\qnf{D}$ and hence $a=e+f\sqrt{D}$ for some $e,f\in\QQ$. Thus we can write equation (\ref{Darstellung_iaxinU(F)ZZforD23}) as 
\begin{eqnarray*}
 & i(e+f\sqrt{D})x=c+d\sqrt{D}\\
\Lra & f\sqrt{-D}x+iex=c+d\sqrt{D}.
\end{eqnarray*}
Therefore $fx\sqrt{-D}\in\ZZ\ \text{and}\ iex\in\ZZ\sqrt{D}$,
so we get $x\in\frac{1}{f(-D)}\ZZ\sqrt{-D}\cap \frac{1}{e}\ZZ\sqrt{-D}$.
As $e,f\in\QQ$ choose $e=\frac{p}{q}, f=\frac{r}{s}$ coprime and set $\tilde{x}\sqrt{-D}=x$, hence 
$$\tilde{x}\in\frac{s}{r(-D)}\ZZ\cap \frac{q}{p}\ZZ.$$
We claim 
\begin{eqnarray*}
  \frac{s}{rD'}\ZZ\cap \frac{q}{p}\ZZ=\frac{\lcm(sp,rD'q)}{rD'p}\ZZ,
\end{eqnarray*}
 and define $D':=-D,\ c_1sp:=\lcm(sp,rD'q),\ c_2rD'q:=\lcm(sp,rD'q)$.

We will first prove `$\supset$'. Let $\eta\in
\dfrac{\lcm(sp,rD'q)}{rD'p}\ZZ$. Thus we can write with $c_1,c_2$ defined as above and $c\in\ZZ$: 
\begin{eqnarray*}
  \eta=\dfrac{\lcm(sp,rD'q)}{rD'p}c&=&\dfrac{c_1sp}{rD'p}c=\dfrac{c_2rD'q}{rD'p}c\\
&=&\dfrac{c_1s}{rD'}c=\dfrac{c_2q}{p}c.
\end{eqnarray*}
We have to find $a=a(c),b=b(c)\in\ZZ$, such that we can write $\eta$ in the form $\dfrac{s}{rD'}a,\dfrac{q}{p}b$.
Now let $a:=c_1c,\ b:=c_2c$ and with this choice $\eta$ lies in $\dfrac{s}{rD'}\ZZ$ and in $\frac{q}{p}\ZZ$ and therefore in
$\frac{s}{rD'}\ZZ\cap\frac{q}{p}\ZZ$.

   Now we deal with `$\subset$'. Choose $\eta\in\frac{s}{rD'}\ZZ\cap\frac{q}{p}\ZZ$, i.e. there exist $a,b\in\ZZ$ with 
\begin{eqnarray}
       \eta=\dfrac{s}{rD'}a=\dfrac{q}{p}b.\label{equationetaausdurchschnitt}                       
\end{eqnarray}
We have to show that there exists a $c(a,b)=c\in\ZZ$ with $\eta=\dfrac{\lcm(sp,rD'q)}{rD'p}c$. Now let $c:=\dfrac{b}{c_2}=\dfrac{a}{c_1}$. Writing the first part of (\ref{equationetaausdurchschnitt}) with this choice of $c$ leads to
\begin{eqnarray*}
  \eta=\dfrac{s}{rD'}cc_1=\dfrac{spc_1}{rD'p}c=\dfrac{\lcm(sp,rD'q)}{rD'p}c.
\end{eqnarray*}
The other case is analogous. So it remains to show that this choice of $c$ lead to integers. This can be seen in the following way: By (\ref{equationetaausdurchschnitt}) we get $sap=qbrD'$, and multiplying this by $c_1c_2$ gives 
$$
\begin{array}{rrcl}
   &sapc_1c_2&=&qbrD'c_1c_2\\
\Longlra& ac_2\lcm(sp,rD'q)&=&bc_1\lcm(sp,rD'q)\\
\Longlra &ac_2&=&bc_1.
\end{array}$$
We know that $c_1$ and $c_2$ are coprime because they are defined by the lowest common multiple. From this  and the equation above it follows that $c_1$ divides $a$ and $c_2$ divides $b$. Thus $c\in\ZZ$ as required.
The case $D\equiv 1\mod4$ is similar.
\end{proof}
This leads to the construction of a toroidal compactification. We have a $\ZZ$-lattice of rank $1$ in the complex vector space $U(F)_\CC=U(F)\otimes_\ZZ\CC$. To give a local compactification of $\Gamma\backslash\CHn$ we will choose coordinates on $\CHn$, namely $(t_1:\dots:t_{n+1})$. By the definition of $\CHn$ we can assume that $t_{n+1}=1$. We will compactify $\CHn$ locally in the direction of the cusp $F$. Therefore we will denote the partial quotient by
$$\CHn(F):=\CHn/U(F)_\ZZ.$$
By standard calculations this can be identified with
\begin{eqnarray}
  \CHn(F)\isom \CC^*\times\CC^{n-1}.
\end{eqnarray}
For this identification we introduce new variables $\alpha$ and $\underline{w}=(w_2,\dots,w_n)$:
\begin{eqnarray*}
  t_1&\mapsto& \alpha\in\CC^*,\\
t_i &\mapsto& w_i\in\CC,\ 2\leq i\leq n.
\end{eqnarray*}
We need an explicit description of the action of the  group $N(F)$ on $\CHn(F)$. 
\begin{lemma}\label{LemmaN(F)actsonCHn}
   If 
\begin{eqnarray*}
  g=\left(
\begin{array}{c|c|c}
  u&v&w\\
\hline
0&X&y\\
\hline
0&0 & z
\end{array}
\right)\in N(F),
\end{eqnarray*}
then $g$ acts on $\CHn$ by
\begin{eqnarray}
  \alpha&\mapsto&\frac{1}{z}\left(\frac{\alpha}{\overline{z}}+v\underline{w}+w\right),\notag\\
\underline{w}&\mapsto&\frac{1}{z}\left(X\underline{w}+y\right).\notag
\end{eqnarray}
\end{lemma}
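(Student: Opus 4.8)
The plan is to compute the natural linear action of $g$ on $\PP(\Lambda_\CC)$ in the affine chart $t_{n+1}=1$ and then to simplify it using the defining relations of $N(F)$ recorded in Lemma~\ref{LemmacalculationN(F)}. First I would represent a point of $\CHn$ by the column vector $\omega=\transpose{(\alpha,\underline w,1)}$ with $\underline w=\transpose{(w_2,\dots,w_n)}$; this is legitimate since $t_{n+1}\neq0$ on $\CHn$, and on this chart one computes $h(\omega,\omega)=2\operatorname{Re}(\bar a\alpha)+\hermitian{\underline w}B\underline w$ with $B$ positive definite, so the condition $h(\omega,\omega)<0$ forces $\operatorname{Re}(\bar a\alpha)<0$ and in particular $\alpha\in\CC^*$, which is why the first coordinate is written as a unit. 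The group acts on $\PP(\Lambda_\CC)$ by $[\omega]\mapsto[g\omega]$, and since $g\in N(F)\subset\Gamma_\RR$ preserves $h(\cdot,\cdot)$ it sends $\CHn$ to itself.

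Next I would simply multiply out the block form of $g$ against $\omega$:
\[
g\omega=\transpose{\bigl(\,u\alpha+v\underline w+w,\ X\underline w+y,\ z\,\bigr)}.
\]
Since $g$ is block upper triangular and invertible, its diagonal blocks $u$, $X$, $z$ are invertible; in particular $z\neq0$, so I may divide through by the $(n+1)$-st coordinate of $g\omega$, namely $z$, to return to the chart $t_{n+1}=1$. This shows that $g$ sends $(\alpha,\underline w)$ to $\bigl(\tfrac1z(u\alpha+v\underline w+w),\ \tfrac1z(X\underline w+y)\bigr)$. Finally I would substitute the relation $z\overline u=1$ of Lemma~\ref{LemmacalculationN(F)}, i.e.\ $u=1/\overline z$, into the first component, turning it into $\tfrac1z\bigl(\tfrac{\alpha}{\overline z}+v\underline w+w\bigr)$, which is exactly the asserted formula.

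I do not expect any genuine obstacle: the heart of the matter is a single matrix multiplication. The two points that need a little care are renormalising by the correct (the $(n+1)$-st) homogeneous coordinate of $g\omega$, and selecting the relevant one of the three relations cutting out $N(F)$ — here only $z\overline u=1$ is used, the remaining ones enter later when $W(F)$, $U(F)$ and the partial quotient $\CHn(F)=\CHn/U(F)_\ZZ$ are analysed. One may also record, for later use, that since $N(F)$ is the stabiliser of $F$ it normalises $W(F)$ and hence $U(F)_\ZZ$, so the same formula descends to $\CHn(F)$.
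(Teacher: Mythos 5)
Your argument is correct and is essentially identical to the paper's proof: a direct block multiplication of $g$ against $\transpose{(\alpha,\underline w,1)}$, rescaling by the last coordinate $z$, and substituting $u=(\overline z)\inv$ from Lemma \ref{LemmacalculationN(F)}. The extra observations (why $\alpha\in\CC^*$, invariance of $\CHn$, descent to $\CHn(F)$) are harmless additions not needed for the statement.
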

\begin{proof}
  This easily follows from the computation
  \begin{eqnarray*}
    \left(
\begin{array}{c|c|c}
  u&v&w\\
\hline
0&X&y\\
\hline
0&0 & z
\end{array}
\right)
\left(\begin{array}{c}
  \alpha\\ \underline{w}\\1
\end{array}\right)=
\left(\begin{array}{c}
  u\alpha+v\underline{w}+w\\
X\underline{w}+y\\z
\end{array}\right)=
\left(\begin{array}{c}
 \dfrac{u\alpha+v\underline{w}+w}{z}\\
\dfrac{X\underline{w}+y}{z}\\1
\end{array}\right).
  \end{eqnarray*}
and the property $u=(\overline{z})\inv$ from Lemma \ref{LemmacalculationN(F)}.
\end{proof}
Define the algebraic torus $T$ as
\begin{eqnarray*}
  T:=U(F)_\CC/U(F)_\ZZ\isom\CC^*.
\end{eqnarray*}
We define a variable $\theta$ on  $T$ by
\begin{eqnarray*}
 \theta:= \exp_a(\alpha):=\left\{
\begin{array}{cl}
       e^{\frac{2\pi rD'p}{a\lcm(sp,rD'q)\sqrt{-D}}\alpha}=:e^{\frac{2\pi i}{\sigma}\alpha}, &D\equiv2,3\mod4,\\
e^{\frac{4\pi rD'p}{a\lcm(sp,rD'q)\sqrt{-D}}\alpha}=:e^{\frac{2\pi i}{\sigma}\alpha},&D\equiv1\mod4,
\end{array}\right.
\end{eqnarray*}
where we use the same notation as in the proof of Lemma \ref{LemmaU(F)ZZisomZZ}. This variable has to be invariant under the action of $U(F)_\ZZ$, i.e. $\alpha\mapsto\alpha+iax=\alpha+\sigma b$ for a $b\in\ZZ$ and $\sigma$ as above.
Let $g\in G(F)=N(F)_\ZZ/U(F)_\ZZ$ and suppose that $g$ has order $m>1$. We will also write $g$ if we think of $g$ as an element of $N(F)$.
If we want to compactify $\Gamma\backslash\CHn$ locally around the cusp $F$ that means that we allow $\theta=0$. So we add $\{0\}\times\CC^{n-1}$ to the boundary modulo the action of $G(F)$ which extends uniquely to the boundary.
We now want to apply the techniques from section \ref{SectionInterior} to the boundary. Suppose now that $g$ fixes the boundary point $(0,\underline{w}_0)$ for some $\underline{w}_0\in\CC^{n-1}$. Let $\zeta^{a_i}$ be the eigenvalues of the action of $g$ on the tangent space, where $\zeta$ denotes a primitive $m$th root of unity. Thus we can, as before, define the Reid-Tai sum $\Sigma(g)$.
\begin{proposition}\label{PropositiongnoQRfixingboundarySigmag_geq1}
  Suppose no power of $g$ acts as a quasi-reflection at the boundary point $(0,\underline{w}_0)$ and $D<D_0$. Then $\Sigma(g)\geq1$.
\end{proposition}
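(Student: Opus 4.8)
The plan is to reduce the boundary statement to the interior results of Section~\ref{SectionInterior} by identifying the tangent space at a fixed boundary point with a space of the same shape as the interior tangent space $V$, together with one extra torus coordinate. First I would recall from Lemma~\ref{LemmaN(F)actsonCHn} that $N(F)$ acts on $\CHn(F)\isom\CC^*\times\CC^{n-1}$ by $\alpha\mapsto z\inv(\overline z\inv\alpha+v\underline w+w)$ and $\underline w\mapsto z\inv(X\underline w+y)$, so on the partial compactification with coordinates $(\theta,\underline w)$ the torus coordinate $\theta$ near $\theta=0$ transforms by a character (coming from the $u=\overline z\inv$ action on $t_1$, hence a root of unity once $g$ has finite order) and $\underline w$ transforms affinely through $X$. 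Consequently, at a fixed boundary point $(0,\underline w_0)$ the differential of $g$ splits as $\zeta^{a_0}$ on the $\theta$-direction and the linear part $z\inv X$ on the $\CC^{n-1}$-direction.

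Next I would analyse the $\CC^{n-1}$-summand. The matrix $X$ satisfies $\hermitian X B X=B$ by Lemma~\ref{LemmacalculationN(F)}, so $X$ is an isometry of the positive (or negative) definite hermitian form given by $B$ on $E_\qnf D^\perp/E_\qnf D$, which is a $\qnf D$-vector space of rank $n-1$. Thus $z\inv X$ acts on $\CC^{n-1}$ exactly like an element of $U$ of a rank $n-1$ hermitian $\sO$-lattice acting on the analogue of $\Hom(\WW,\CC^{n+1}/\WW)$; here the scalar $z\inv$ plays the role of $\alpha(g)\inv$ from the interior, and the decomposition of $\CC^{n-1}$ into the $\qnf D$-irreducible pieces $\VV_d$ of Proposition~\ref{PropositionirredRepsoverqnf} goes through verbatim. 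The eigenvalues of $g$ on the boundary tangent space are therefore $\zeta^{a_0}$ together with products $z\inv\cdot(\text{primitive }d\text{th roots})$, i.e. exactly the eigenvalue shapes $\{\frac{k}{r}+\frac{a}{d}\}$ controlled in the proof of Theorem~\ref{TheoremforgnoQRandngeq11}. Since $a_0\ge 0$ only adds a nonnegative term to $\Sigma(g)$, we get $\Sigma(g)\ge\sum\{\frac{k_1}{r}+\frac{a}{d}\}$ over the $\VV_d$-summands, and the hypothesis that no power of $g$ is a quasi-reflection forces the same structural restrictions on the multiplicities as in the interior. Then the functions $c_{\min}(d)$, $c_{\min}^{\red}(d)$, $\mc(r)$ already computed give $\Sigma(g)\ge1$ verbatim, using $D<D_0$ to kill the small-$r$ exceptions via complex-conjugate eigenvalue choices.

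The main obstacle is the bookkeeping at the boundary point itself: one must check that "no power of $g$ acts as a quasi-reflection on the boundary tangent space" translates into the same combinatorial constraints that were used interiorly (in particular that the $\theta$-eigenvalue $\zeta^{a_0}$ cannot be exploited to make $g$ a quasi-reflection while the $\underline w$-part is trivial, and conversely), so that the dimension-count equation analogous to (\ref{Equationdimensioncountrepresentations}) holds with $n-1$ in place of $n$. Once that correspondence is established, the estimate is not a new computation but a direct appeal to the interior case: one invokes the bounds from Lemma~\ref{LemmagnoQRphi(r)geq10}, Lemma~\ref{LemmagnoQRR=1,2} and the case analysis in Theorem~\ref{TheoremforgnoQRandngeq11}, noting that the presence of the extra nonnegative summand $\{\frac{a_0}{m}\}$ only helps. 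I would therefore organise the proof as: (1) write down the differential of $g$ at $(0,\underline w_0)$ via Lemma~\ref{LemmaN(F)actsonCHn}; (2) decompose $\CC^{n-1}$ under $\latt g$ into $\VV_d$'s using that $X$ preserves $B$; (3) observe the eigenvalues match the interior shape with $z\inv$ in the role of $\alpha(g)\inv$; (4) conclude $\Sigma(g)\ge1$ by the already-proven interior estimates, using $D<D_0$.
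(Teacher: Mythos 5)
Your reduction captures only the easy half of the argument, and the two places where you wave your hands (``the $\theta$-eigenvalue only adds a nonnegative term'' and ``no power of $g$ being a quasi-reflection forces the same structural restrictions'') are exactly where the actual work lies. First, the correct starting point is that $D<D_0$ forces $z=\pm1$ (the only units of $\sO$ are $\pm1$ for these $D$), so the linear part on the $\underline{w}$-directions is $\pm X$ and the relevant interior input is Lemma \ref{LemmagnoQRR=1,2} (the $r=1,2$ case), not Theorem \ref{TheoremforgnoQRandngeq11}: note that the proposition carries no hypothesis on $n$, whereas your appeal to the case analysis and the dimension count (\ref{Equationdimensioncountrepresentations}) ``with $n-1$ in place of $n$'' would silently import a condition of the shape $n\geq12$ that is not (and need not be) assumed. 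Moreover, this reduction to the interior only works when the order $m_X$ of $X$ is greater than $2$, since only then does the decomposition of the $X$-module contain a $\sV_d$ with $d>2$ whose contribution the computations of Lemma \ref{LemmagnoQRR=1,2} control.

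Second, and more seriously, your proposal does not treat the cases $m_X=1,2$, which are the genuinely boundary-specific part of the proof. There the $\underline{w}$-block contributes at most $1/2$ (a single eigenvalue $-1$) or nothing ($X=I$), and the eigenvalue in the $\theta$-direction, which equals $\exp_a(\pm(v\underline{w}_0+w))$ and depends on $v,w,\underline{w}_0$ rather than just on $u=\overline{z}\inv$, is not a harmless extra nonnegative term: a priori it could be a root of unity of large order, making $\Sigma(g)<1$ while $g$ has two eigenvalues $\neq1$ and hence is not a quasi-reflection, so the hypothesis alone does not save you. The paper closes exactly this gap: for $m_X=1$ the fixed-point condition gives $y=0$, then $\hermitian{X}By+\hermitian{v}az=0$ gives $v=0$, so $g\in U(F)_\ZZ$ is trivial in $G(F)$; for $m_X=2$ one uses $g^2\in U(F)_\ZZ$ (i.e. $v+vX=0$, $Xy+y=0$, $2w+vy\equiv0\bmod\sigma$) together with $\underline{w}_0=X\underline{w}_0+y$ to show $2(v\underline{w}_0+w)\equiv0\bmod\sigma$, hence the $\theta$-eigenvalue is $\pm1$; then all eigenvalues are $\pm1$ and either $g$ is a reflection (excluded by hypothesis) or at least two eigenvalues equal $-1$, giving $\Sigma(g)\geq1$. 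Without this arithmetic at the cusp your argument does not yield the stated bound.
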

\begin{proof}
  As $D<D_0$ we can assume $z=\pm1$, because $z$ is invertible in $\sO$ as $z\overline{u}=1$ by Lemma \ref{LemmacalculationN(F)}.
Now we have to determine the action of $g$ on the tangent space. This is for obvious reasons given by the matrix
\begin{eqnarray*}
 J= \left(\begin{array}{cc}
\exp_a(\pm(v\underline{w}_0+w)) & 0\\
\ast & \pm X
\end{array}\right).
\end{eqnarray*}
We denote the order of $X$ by $m_X$ and investigate the decomposititon of the representation $X$. As before the representation decomposes into a direct sum of $\sV_d$'s. We have to distinguish two cases. 
First assume that $m_X>2$. In this case we are in the situation of Lemma \ref{LemmagnoQRR=1,2}, as we are in case $D<D_0$ and the only irreducible $1$-dimensional representations are $V_1$ and $V_2$. So by the lemma we get $\Sigma(g)\geq1$.
 Now let $m_X=1$ or $m_X=2$. The action of $-1\in\Gamma$ is trivial and so we can get $z=1$ by replacing $g$ by $-g$. Assume $m_X=1$ and hence $X=I$. As the element $g$ fixes the boundary point $(0,\underline{w}_0)$ we get $y=0$ from Lemma \ref{LemmaN(F)actsonCHn} and then by the group relations of Lemma \ref{LemmacalculationN(F)} we have $v=0$ since $\hermitian{v}a=0$. So the element $g$ has to have the form
$$
g=\left(
\begin{array}{c|c|c}
  1&0&w\\
\hline
0&I&0\\
\hline
0&0 & 1
\end{array}
\right),
$$
and hence $g\in U(F)_\ZZ$. This implies that $g\in N(F)_\ZZ/ U(F)_\ZZ$ is the identity.
Finally we have to check the case $m_X=2$. So $g^2\in U(F)_\ZZ$, and therefore we get the following relations, where  $\sigma$ is as before:
\begin{eqnarray}
  v+vX&=&0,\label{equationv+vX=0}\\
Xy+y&=&0,\notag\\
2w+vy&\equiv&0\mod \sigma.\label{congruence2w+vycong0}
\end{eqnarray}
We only consider the case $D\equiv2,3\mod4$ as the case $D\equiv1\mod4$ is analogous.
Define $t:=v\underline{w}_0+w$ which is the argument of the  exponential map  in the  matrix $J$. We want to show $2t\equiv0\mod\sigma\ZZ$ as this implies
$\exp_a(t)=\pm1$.
We will now use  $\underline{w}_0=X\underline{w}_0+y$ as $g$ fixes the boundary point and the relations (\ref{equationv+vX=0}), (\ref{congruence2w+vycong0}). Hence we get
\begin{eqnarray*}
  2t=2v\underline{w}_0+2w&\equiv&2v\underline{w}_0-vy\\
&=&v\underline{w}_0+v\underline{w}_0-vy=v\underline{w}_0+v(\underline{w}_0-y)\\
&=&v\underline{w}_0+vX\underline{w}_0=v(I+X)\underline{w}_0\\
&\equiv&0\mod \sigma.
\end{eqnarray*}
Therefore all the eigenvalues on the tangent space are $\pm1$, as $X$ has order $2$ and $\exp_a(t)=\pm1$ for $t$  as above.
So there are two possibilities: all but one of the eigenvalues are $+1$, so $g$ acts as a reflection (in this case all quasi-reflections have order $2$), or there are at least two eigenvalues $-1$ and the remaining are $+1$, so we will have $\Sigma(g)\geq1$.
\end{proof}
\begin{corollary}\label{Corollarynotfixingboundarydivisor}
  At the boundary there are no divisors over a dimension $0$ cusp $F$ that are fixed by a non-trivial element of $N(F)_\ZZ/U(F)_\ZZ$ in the case $D<D_0$.
\end{corollary}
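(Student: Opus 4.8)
The plan is to read this corollary as an essentially immediate consequence of Proposition \ref{PropositiongnoQRfixingboundarySigmag_geq1} together with the structural analysis of the boundary already carried out in the preceding lemmas. A divisor at the boundary over the $0$-dimensional cusp $F$ sits inside $\{\theta=0\}\times\CC^{n-1}$, i.e. it is (a component of) the image of $\{0\}\times\CC^{n-1}$ under the $G(F)=N(F)_\ZZ/U(F)_\ZZ$-action. So suppose, for contradiction, that some non-trivial $g\in N(F)_\ZZ/U(F)_\ZZ$ fixes such a boundary divisor $Z$. First I would note that fixing a divisor means that $g$ acts as a quasi-reflection on the tangent space at a generic point $(0,\underline w_0)$ of $Z$: the fixed locus of $g$ on the local model $\CC^*\times\CC^{n-1}$ (extended to $\theta=0$) has codimension $1$, so exactly one eigenvalue of the differential of $g$ at $(0,\underline w_0)$ is $\neq 1$. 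Since $Z$ lies in the boundary, $(0,\underline w_0)$ is a boundary point fixed by $g$, so Proposition \ref{PropositiongnoQRfixingboundarySigmag_geq1} applies to $g$ unless some power of $g$ is a quasi-reflection there — but $g$ itself is a quasi-reflection, so that hypothesis fails and we are precisely in the excluded case.

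Thus the real content is to unwind the final paragraph of the proof of Proposition \ref{PropositiongnoQRfixingboundarySigmag_geq1}: there it is shown that, in the case $D<D_0$, whenever $g$ fixes a boundary point and acts as a quasi-reflection on the tangent space, necessarily $z=\pm1$, $m_X\in\{1,2\}$, all tangent eigenvalues are $\pm 1$, and $g$ has order $2$ acting as a genuine reflection. I would then argue that such a reflection cannot in fact fix a boundary \emph{divisor} lying inside $\{\theta=0\}\times\CC^{n-1}$: the one non-trivial eigenvalue, coming from $\exp_a(\pm(v\underline w_0+w))=\pm1$, is the eigenvalue in the $\theta$-direction (the direction transverse to the boundary stratum). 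Indeed, by the computation $2t\equiv 0\bmod\sigma\ZZ$ with $t=v\underline w_0+w$, combined with the relations (\ref{equationv+vX=0}), (\ref{congruence2w+vycong0}), one sees that once we are on the boundary $\theta=0$ the action of $g$ on the remaining $\CC^{n-1}$-coordinates is by $\pm X$ with $X^2=I$; if this $\pm X$ were the identity then $g\in U(F)_\ZZ$ and $g$ is trivial in $G(F)$, contradiction, and if $\pm X\neq I$ it has at least one $-1$ eigenvalue \emph{within} the boundary stratum, so its fixed locus in $\{0\}\times\CC^{n-1}$ has codimension $\geq 1$ there, hence codimension $\geq 2$ in the whole local model — so $g$ does not fix a divisor. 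Either way there is no boundary divisor fixed by a non-trivial element.

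Concretely the write-up would be: let $g$ be non-trivial fixing a boundary divisor; reduce to $z=\pm1$ and replace $g$ by $\pm g$ to get $z=1$ as in the proof of the Proposition; if $m_X>2$ apply Lemma \ref{LemmagnoQRR=1,2} via the Proposition to get $\Sigma(g)\geq1$, so $g$ is not even a quasi-reflection, contradicting that it fixes a divisor; if $m_X\le 2$ run the explicit analysis showing all eigenvalues are $\pm1$ and that the only way to have a single non-trivial eigenvalue forces it to be the transverse ($\theta$-direction) eigenvalue while $X=I$, forcing $g\in U(F)_\ZZ$ and hence $g$ trivial in $N(F)_\ZZ/U(F)_\ZZ$ — the final contradiction. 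I expect the main obstacle to be purely bookkeeping: carefully identifying which eigenvalue of $J$ is the ``transverse'' one and checking that the quasi-reflection condition at a boundary divisor really does pin that eigenvalue down, rather than allowing the exceptional eigenvalue to live in the $\CC^{n-1}$ factor. This is exactly the point handled by the congruence $2t\equiv0\bmod\sigma\ZZ$ and the relation $v(I+X)\underline w_0\equiv0$, so I would lean on those identities as already established and simply observe that they force $X=I$ once we insist the fixed locus be a divisor. No genuinely new estimate is needed beyond Proposition \ref{PropositiongnoQRfixingboundarySigmag_geq1}; the corollary is a clean corollary.
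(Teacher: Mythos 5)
Your argument is correct and follows essentially the same route as the paper: boundary divisors over $F$ are exactly $\{\theta=0\}$, only quasi-reflections fix divisors, and the analysis in the proof of Proposition \ref{PropositiongnoQRfixingboundarySigmag_geq1} (the $m_X=1$ case forcing $g\in U(F)_\ZZ$, hence triviality in $N(F)_\ZZ/U(F)_\ZZ$) shows the $X$-part of a non-trivial quasi-reflection cannot act trivially on the boundary coordinates, so $\{\theta=0\}$ is never pointwise fixed. One cosmetic remark: the $m_X>2$ detour is unnecessary (and as phrased slightly circular, since Lemma \ref{LemmagnoQRR=1,2} presupposes $g$ is not a quasi-reflection), because pointwise fixing the boundary divisor already forces $\pm X=I$.
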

\begin{proof}
  Each divisor at the boundary has $\theta=0$. The only elements fixing a divisor are the quasi-reflections. The variable $\theta$ corresponds to the entry
$\exp_a(\pm(v\underline{w}_0+w))$ from the induced action on the tagent space. From the proof of Proposition \ref{PropositiongnoQRfixingboundarySigmag_geq1} each matrix $X$ belonging to a quasi-reflection has order greater than $1$. Thus no divisor $\theta=0$ is fixed.
\end{proof}
Finally we have to consider quasi-reflections at the boundary, which will be done as in section \ref{SectionInterior}. Therefore define $\Sigma'(g)$ for $g\in G(F)$ as in (\ref{DefinitionSigmaprime}). 
\begin{proposition}\label{Propositionh=gkQRSigmaprimegeq1}
  Let $g\in G(F)$ be such that $h=g^k$ is a quasi-reflection. Assume that $n\geq13$ and $D<D_0$. Then $\Sigma'(g^f)\geq1$ for every $1\leq f< k$.
\end{proposition}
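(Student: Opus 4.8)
The plan is to mimic the structure of the interior case, namely the proof of Proposition~\ref{PropositionSigmaprimegeq1interior}, now working inside the stabiliser $N(F)$ of the cusp and its image $G(F)=N(F)_\ZZ/U(F)_\ZZ$. First I would recall from the proof of Proposition~\ref{PropositiongnoQRfixingboundarySigmag_geq1} the normal form of a quasi-reflection at the boundary: writing $g=\left(\begin{smallmatrix} u&v&w\\ 0&X&y\\ 0&0&z\end{smallmatrix}\right)$, the induced action on the tangent space at $(0,\underline w_0)$ is given by $J=\left(\begin{smallmatrix}\exp_a(\pm(v\underline w_0+w))&0\\ \ast&\pm X\end{smallmatrix}\right)$, and for $h=g^k$ to be a quasi-reflection all but one eigenvalue of $J^k$ must be $1$. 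Exactly as in the interior, I would use the decomposition of the linear part (the block $X$, governing the $\underline w$-coordinates) into $\qnf{D}$-irreducible pieces $\sV_d$ provided by Proposition~\ref{PropositionirredRepsoverqnf} and the lemmas of section~\ref{SectionInterior}, together with the extra $\theta$-coordinate coming from $\exp_a$. Since $h$ is a quasi-reflection, only one eigenvalue of $h$ on the whole tangent space differs from $1$; this forces the non-trivial eigenvalue to sit in a one-dimensional summand $\sV_d$ with $d\in\{1,2\}$ when $D<D_0$, and all eigenvalues of $g$ on $\sV_r^\omega$ (here the $\theta$-direction) equal to $\alpha(h)=\pm1$.

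Next I would pass to the quotient $V':=V/\langle h\rangle$, which is smooth since $k$ is chosen minimal, and analyse the action of $\langle g\rangle/\langle h\rangle$ on $V'$. The eigenvalue $\zeta^{a_n}$ whose summand carries the non-trivial eigenvalue comes from a rank-one lattice vector $\delta$ with $h(\delta,\delta)>0$ (as $\sV_d$ lies in the positive-definite part orthogonal to the isotropic line and to $\WW$). Splitting off $\delta^\perp$ gives a sublattice $\Lambda'$ of signature $(n-1,1)$ on which $\langle g\rangle/\langle h\rangle$ acts through $U(\Lambda')$, still fixing the corresponding cusp $F'$. This yields the identity $\Sigma'(g^f)=\left\{\tfrac{fa_n}{k}\right\}+\Sigma(g^f\langle h\rangle)$ with $g^f\langle h\rangle$ in the boundary stabiliser of $\Lambda'$. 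I would then argue, exactly as in Proposition~\ref{PropositionSigmaprimegeq1interior}, that $g^f\langle h\rangle$ is not a quasi-reflection at the boundary of $\Lambda'$ — if it were, Corollary~\ref{CorollaryQRareinducedbyhwhichlookas} would force the order of its eigenvalue on $\sV_d$ into $\{1,2\}$, hence $\ord(g^f)\mid l$ and $g^f\in\langle h\rangle$.

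Having reduced to an element with no quasi-reflection power acting at a boundary point of a lattice of signature $(n-1,1)$, I would invoke Proposition~\ref{PropositiongnoQRfixingboundarySigmag_geq1} to conclude $\Sigma(g^f\langle h\rangle)\geq1$ whenever $n-1\geq\dots$; combined with the non-negative term $\left\{\tfrac{fa_n}{k}\right\}$ this gives $\Sigma'(g^f)\geq1$. The dimension bookkeeping — tracking that removing the one-dimensional summand $\sV_d$ and passing to $\delta^\perp$ still leaves enough room for the hypothesis of Proposition~\ref{PropositiongnoQRfixingboundarySigmag_geq1} to apply — is where the numerical threshold $n\geq13$ enters, one higher than the interior bound $n\geq12$ because the boundary costs an extra dimension (the $\theta$-coordinate plays the role of $\sV_r^\omega$ but the partial quotient $\CHn(F)$ has already consumed one direction). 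I expect the main obstacle to be precisely this dimension count: verifying that the case analysis over $d\in\{1,2\}$ and over the admissible $\alpha(h)=\pm1$, together with the reduction to $\Lambda'$, never drops below the range in which Proposition~\ref{PropositiongnoQRfixingboundarySigmag_geq1} is available, so that the quasi-reflection along the boundary divisor $\theta=0$ can indeed be factored out cleanly.
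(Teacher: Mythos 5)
Your reduction is in the right spirit (isolate the exceptional eigendirection, pass to a smaller module), but the quantitative heart of the proof is missing, and it is precisely the part the statement is about. You end by invoking Proposition~\ref{PropositiongnoQRfixingboundarySigmag_geq1}, which carries \emph{no} hypothesis on $n$, and you leave the threshold as ``whenever $n-1\geq\dots$''; so your argument as written can neither produce nor explain the bound $n\geq 13$, and you yourself flag the dimension count as an unresolved obstacle. In the paper the bound has a different source: one shows that the exceptional eigenvector $\nu$ of $h$ lies in a one-dimensional summand $\sV_d$ of the $X$-block, and then applies the \emph{interior} estimate, Theorem~\ref{TheoremforgnoQRandngeq11}, to the $(n-2)$-dimensional $g$-module $E_\qnf{D}^\perp/(E_\qnf{D}+\qnf{D}\nu)$; the hypothesis of that theorem forces $n-2\geq 11$, i.e.\ $n\geq 13$. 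Re-routing through the boundary Proposition~\ref{PropositiongnoQRfixingboundarySigmag_geq1} instead of Theorem~\ref{TheoremforgnoQRandngeq11} is not a harmless variant: either that proposition applies and you would (suspiciously) get the result with no lower bound on $n$, or --- as is the case --- you have not verified its hypotheses, which require that \emph{no power} of $g^f\latt{h}$ acts as a quasi-reflection at the boundary point, whereas your Corollary~\ref{CorollaryQRareinducedbyhwhichlookas} argument only excludes $g^f\latt{h}$ itself being one.

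Two further points need repair. First, you silently place the non-trivial eigenvalue of $h$ in the $X$-block (``all eigenvalues \dots\ in the $\theta$-direction equal to $\pm1$''); the case in which the exceptional eigenvalue is the toral one $\exp_a(t)$ must be treated separately (the paper disposes of it by noting that then $X^f$ already contributes at least $1$ to $\Sigma'(g^f)$). Second, you transplant interior notation into the cusp picture: there is no $\omega$, no $\WW$ and no $\sV_r^\omega$ at the boundary; the correct substitute is the isotropic line $E_\qnf{D}$, the definite form $B$ on $E_\qnf{D}^\perp/E_\qnf{D}$, and the toral coordinate $\theta$. Your positivity claim $h(\delta,\delta)>0$ and the passage to $\delta^\perp$ of signature $(n-1,1)$ can be salvaged in that language, but the clean and short conclusion is the paper's: once $\dim\sV_d=1$ is established, work with $E_\qnf{D}^\perp/(E_\qnf{D}+\qnf{D}\nu)$ and quote Theorem~\ref{TheoremforgnoQRandngeq11} for $D<D_0$, which both closes the argument and accounts for the hypothesis $n\geq 13$.
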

\begin{proof}
  The proof is similar to the proof of \cite[Proposition 2.30]{MR2336040}.
We will again study the action of $h$ on the tangent space. If $\exp_a(t)$ is the  eigenvalue not equal to $1$, then  $X^f$ contributes at least $1$ to $\Sigma'(g^f)$.
  Now denote this unique eigenvalue of $h$ on the tangent space by $\zeta\not=1$. Let $\nu$ be the exceptional eigenvector of of $h$ with the property $h(\nu)=\zeta\cdot \nu$. Consider the decomposition of $X$ as a $g$-module and assume that  $\nu$ occurs in the representation $\sV_d$. The dimension of $\sV_d$ has to be $1$ as otherwise it would contribute another eigenvalue not equal to $1$. Now we study the $g$-module 
$$E_\qnf{D}^\perp/(E_\qnf{D}+\qnf{D}\nu),$$ which is $(n-2)$-dimensional. We can refer to Theorem \ref{TheoremforgnoQRandngeq11} as long as $D<D_0$.
So $\Sigma(g)\geq1$ if $n-2\geq11$  and thus $\Sigma'(g)\geq1$.
\end{proof}
\begin{theorem}
  Let $n\geq13 $ and $D<D_0$. Then the toroidal compactification $(\CHn/\Gamma)^*$ of $\CHn/\Gamma$ has canonical singularities. Furthermore, there are no fixed divisors in the boundary.
\end{theorem}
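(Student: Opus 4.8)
The plan is to assemble the local analyses of Sections~\ref{SectionInterior} and~\ref{SectionBoundary} and to apply the Reid--Tai criterion at every point of $(\CHn/\Gamma)^*$, distinguishing interior points from boundary points. Recall that canonical singularities is a local property, and that near a point the variety is étale-locally a finite cyclic quotient of a smooth variety, so it suffices to verify the (possibly $\Sigma'$-modified) Reid--Tai inequality for the relevant stabiliser.

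For a point in the interior $\Gamma\backslash\CHn$ there is nothing new to do: Theorem~\ref{Theoremngeq12interiorcansings} already gives canonical singularities whenever $n\geq12$ and $D<D_0$, and here $n\geq13$. For a boundary point recall that near the cusp $F$ the toroidal compactification is modelled on $\CHn(F)\isom\CC^*\times\CC^{n-1}$ with $\{0\}\times\CC^{n-1}$ adjoined, modulo the action of $G(F)=N(F)_\ZZ/U(F)_\ZZ$, and that the stabiliser in $G(F)$ of any boundary point is finite. Fix such a point $(0,\underline w_0)$ and let $g$ generate a cyclic subgroup of its stabiliser. If no power of $g$ acts as a quasi-reflection on the tangent space, then $\Sigma(g)\geq1$ by Proposition~\ref{PropositiongnoQRfixingboundarySigmag_geq1}. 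If instead $h=g^k$ is a quasi-reflection, with $k>1$ minimal, one passes to the smooth partial quotient $V':=V/\latt{h}$ and applies Proposition~\ref{Propositionh=gkQRSigmaprimegeq1}, which gives $\Sigma'(g^f)\geq1$ for $1\leq f<k$; this is the step that forces $n\geq13$, since the argument reduces to Theorem~\ref{TheoremforgnoQRandngeq11} on an $(n-2)$-dimensional sublattice, requiring $n-2\geq11$. In either case the boundary analogue of Lemma~\ref{LemmacasesinteriorSigmaandSigmaprimegeq1}, established exactly as in \cite[Lemma~2.14 and Proposition~2.15]{MR2336040}, now yields that $(\CHn/\Gamma)^*$ has canonical singularities at $(0,\underline w_0)$. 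Together with the interior case this proves the first assertion.

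The statement about fixed divisors in the boundary is Corollary~\ref{Corollarynotfixingboundarydivisor}: every boundary divisor lies in $\{\theta=0\}$, the only elements of $N(F)_\ZZ/U(F)_\ZZ$ that can fix a divisor are quasi-reflections, and the proof of Proposition~\ref{PropositiongnoQRfixingboundarySigmag_geq1} shows that the matrix $X$ attached to any such quasi-reflection has order $>1$, hence acts non-trivially on the coordinate $\theta$ and cannot fix $\{\theta=0\}$.

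The main obstacle is not a single computation but the bookkeeping needed to transport the whole Reid--Tai package --- including the quasi-reflection refinement via $\Sigma'$ and the passage to the smooth partial quotient --- from the interior to the toroidal boundary chart $\CHn(F)$: one must check that the boundary is étale-locally a cyclic quotient of a smooth variety, that the $G(F)$-stabilisers of boundary points are finite, and that the torus embedding $T\hookrightarrow \overline{T}$ is compatible with the induced action, so that the hypotheses of Lemma~\ref{LemmacasesinteriorSigmaandSigmaprimegeq1} genuinely hold at the boundary. Once this is in place, the numerical inequalities of Propositions~\ref{PropositiongnoQRfixingboundarySigmag_geq1} and~\ref{Propositionh=gkQRSigmaprimegeq1} do the rest.
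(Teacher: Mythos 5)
Your proposal is correct and follows exactly the paper's route: the paper's proof is a one-line assembly of Theorem~\ref{Theoremngeq12interiorcansings} for the interior, Proposition~\ref{PropositiongnoQRfixingboundarySigmag_geq1} and Proposition~\ref{Propositionh=gkQRSigmaprimegeq1} for boundary points, and Corollary~\ref{Corollarynotfixingboundarydivisor} for the absence of fixed boundary divisors. Your additional remarks on the local toroidal model and the origin of the bound $n\geq13$ (from $n-2\geq11$ in the quasi-reflection case) simply make explicit what the paper leaves implicit.
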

\begin{proof}
  This is a consequence of Theorem \ref{Theoremngeq12interiorcansings}, Proposition \ref{PropositiongnoQRfixingboundarySigmag_geq1}, Corollary \ref{Corollarynotfixingboundarydivisor} and Proposition \ref{Propositionh=gkQRSigmaprimegeq1}.
\end{proof}


\end{document}